\documentclass[a4paper, reqno]{amsart}
\usepackage[utf8]{inputenc}
\usepackage{palatino,newpxmath} 
\usepackage[left=3cm,right=3cm,top=3cm,bottom=3cm]{geometry}
\usepackage[english]{babel}
\usepackage{natbib}
\usepackage{url}
\usepackage{amsmath}

\usepackage{amssymb}
\usepackage{amsthm}
\usepackage{mathtools}
\usepackage{amsfonts}
\usepackage{bbm}
\usepackage{caption}
\usepackage{gensymb}
\usepackage{tikz-cd}
\usepackage{mathdots}
\usepackage{cancel}
\usepackage{multicol}
\usepackage[font=small,labelfont=bf]{caption}
\usepackage[colorlinks=true, urlcolor = blue]{hyperref} 
\usepackage[shortlabels]{enumitem}
\usepackage{paracol}
\setenumerate[0]{label=(\arabic*)}

\usepackage[many]{tcolorbox}  

\newtcolorbox{boxE}{
    enhanced, 
    boxrule = 0pt, 
    borderline = {0.75pt}{0pt}{main}, 
    borderline = {0.75pt}{2pt}{sub} 
}

\newcommand{\Z}{\mathbb{Z}}

\newcommand{\R}{\mathbb{R}}
\newcommand{\C}{\mathbb{C}}
\newcommand{\D}{\mathbb{D}}

\renewcommand{\P}{\mathbb{P}}

\newcommand{\F}{\mathbb{F}}

\renewcommand{\phi}{\varphi}

\DeclareMathAlphabet{\mathpzc}{OT1}{pzc}{m}{it} 

\renewcommand{\c}{\mathfrak{c}}

\usepackage{mathrsfs}

\renewcommand{\O}{\mathscr{O}}
\newcommand{\M}{\mathcal{M}}

\newcommand{\RN}[1]{%
  \textup{\uppercase\expandafter{\romannumeral#1}}%
}

\newcommand{\Conn}{\mathfrak{Con}^V_\Theta}

\theoremstyle{definition}
\newtheorem*{defn}{Definition}
\newtheorem{es}{Example}

\newtheorem{oss}{Remark}

\newtheorem*{es*}{Example}
\newtheorem*{lem*}{Lemma}

\theoremstyle{plain}
\newtheorem{thm}{Theorem}[section]

\newtheorem{prop}[thm]{Proposition}
\newtheorem{lemma}[thm]{Lemma}

\newtheorem{theoremA}{Theorem}

\begin{document}


\title{}
\title[The Fifth Painlevé Foliation]{Compactification of the Fifth \\Painlevé Foliation}
\author{Mattia Morbello}

\begin{abstract}
   We extend the previous compactifications of the fifth Painlevé foliation studying the behaviour of the leaves for the time parameter $t$ being equal to $0$ or $\infty$. In particular we find, for each boundary component, a first integral of the Hamiltonian vector field, providing an explicit description of the foliation. 
\end{abstract}

\maketitle

\tableofcontents


\thispagestyle{empty}
\restoregeometry

\newpage

\section*{Introduction}

The fifth Painlevé equation is the complex non-linear second-order differential equation
\[\ddot q(t)=\Bigg(\frac{1}{2q(t)}+\frac{1}{q(t)-1}\Bigg)\dot q(t)^2-\frac1t\dot q(t)+\frac{(q(t)-1)^2}{t^2}\Bigg(\alpha q(t)+\frac{\beta}{q(t)}\Bigg)+\gamma \frac {q(t)}{t}+\delta\frac{q(t)(q(t)+1)}{q(t)-1}.\]
It is part of the family of the so called Painlevé equations discovered by É. Picard (1889), P. Painlevé (1900, 1902), R. Fuchs (1905), and B. Gambier (1910). All these equations are characterised by the Painlevé property: the only movable singularities are poles. A direct consequence is that any meromorphic solution germ admits an analytic continuation as a meromorphic function along any loop avoiding a finite singular set.

\medskip

Painlevé equations are strictly related to the theory of isomonodromic deformations. For instance, solutions of the fifth Painlevé equation induce a one-dimensional singular analytic foliation inside the three-dimensional algebraic variety $\Conn$, the moduli space of a certain class of rank-two irregular connections on the Riemann sphere. These connections are called \textit{PV connections} and they are characterised by having two regular singularities at $0$ and $\infty$, and one irregular singularity of order two at $1$. 
Leaves of the fifth Painlevé foliation are called isomonodromic deformations: indeed, connections that lye on the same leaf share the same monodromy representation. We also refer to this foliation as the \textit{isomonodromic foliation} associated to the fifth Painlevé equation. 

\medskip


The link between the moduli space $\Conn$ and the Painlevé V equation has been deeply studied in the last decades by different mathematicians and physicists. In 1979, K. Okamoto \cite{okamoto} introduced the algebraic spaces of initial conditions for all the Painlevé equations. In the Painlevé V case, for any time parameter $t\in \C^*$, the \textit{Okamoto space} of initial conditions is obtained through a sequence of eight explicit blow-ups of $\F_2$, the second Hirzebruch surface. We denote such surface as $Ok_t^V$ as shown in the following picture. 

\begin{center}
    \includegraphics[width=7cm]{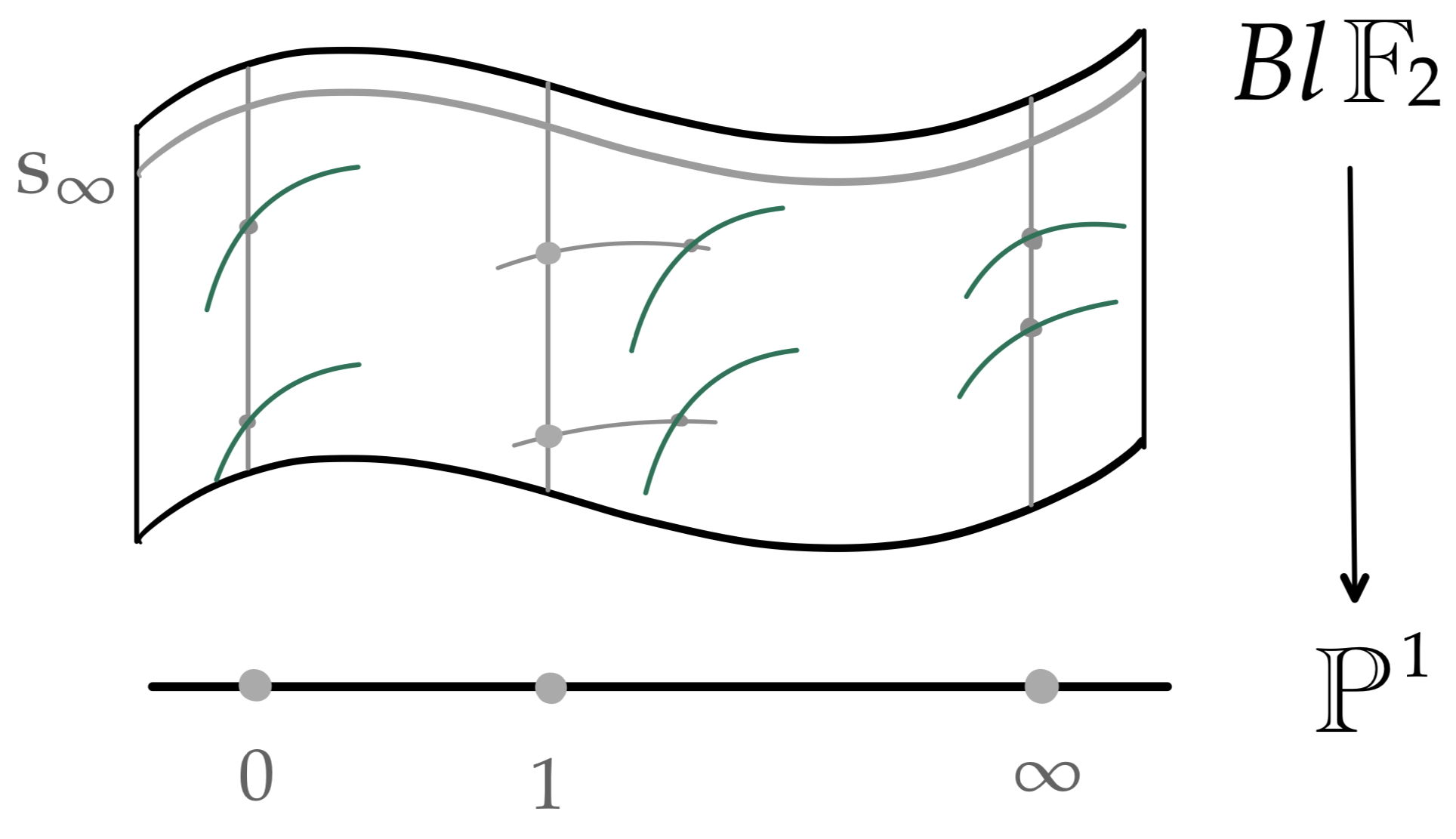}
\end{center}

In grey is represented the so called Okamoto divisor, that corresponds to some points that cannot be taken as initial condition for solutions of the Painlevé V equation: they indeed project over the poles of the connections. Note that the dependence on $t$ appears in the positions of the points in the fiber above 1.

More recently P. Boalch \cite{BOALCH2001137} studied, in an analytic framework, the moduli spaces of irregular meromorphic connection and their isomonodromic deformations, with an emphasis on symplectic structures. In 2006, M. Inaba, K. Iwasaki, and M-H. Saito \cite{inab} used Okamoto's results to study the geometry of the sixth Painlevé foliation in a completely algebraic setting, producing a first compactification of the moduli space of connections considering Okamoto spaces in family. Their moduli space comes with a fibration  $\mathfrak{Con}^{VI}_\Theta\xrightarrow{t} \C\setminus\{0,1\}$, whose fibers are the Okamoto spaces $Ok^{VI}_t$, and they proved in this geometric framework that the isomonodromic foliation is transverse to the fibration. In 2015, H. Chiba \cite{Chiba} proposed a new compactification of the Painlevé foliations using weighted projective spaces. His recent work do not explore what happens to the foliation for the time parameter $t$ equal to $0$ or $\infty$, exactly as for \cite{inab}.\\
In \cite{Matt} the moduli space of PV connections $\Conn$ is constructed and a fibration $\Conn\xrightarrow{t} \C^*$ is presented: fibers correspond to the Okamoto spaces. In the same paper, the compactification $\overline \Conn$ is introduced. It allows to extend the fibration to $\pi\colon\overline\Conn\xrightarrow{t} \P^1$. The boundary components $\pi^{-1}(0)$ and $\pi^{-1}(\infty)$ are added and their geometry is described in Theorem 4.25.

\begin{center}
    \includegraphics[width=0.5\linewidth]{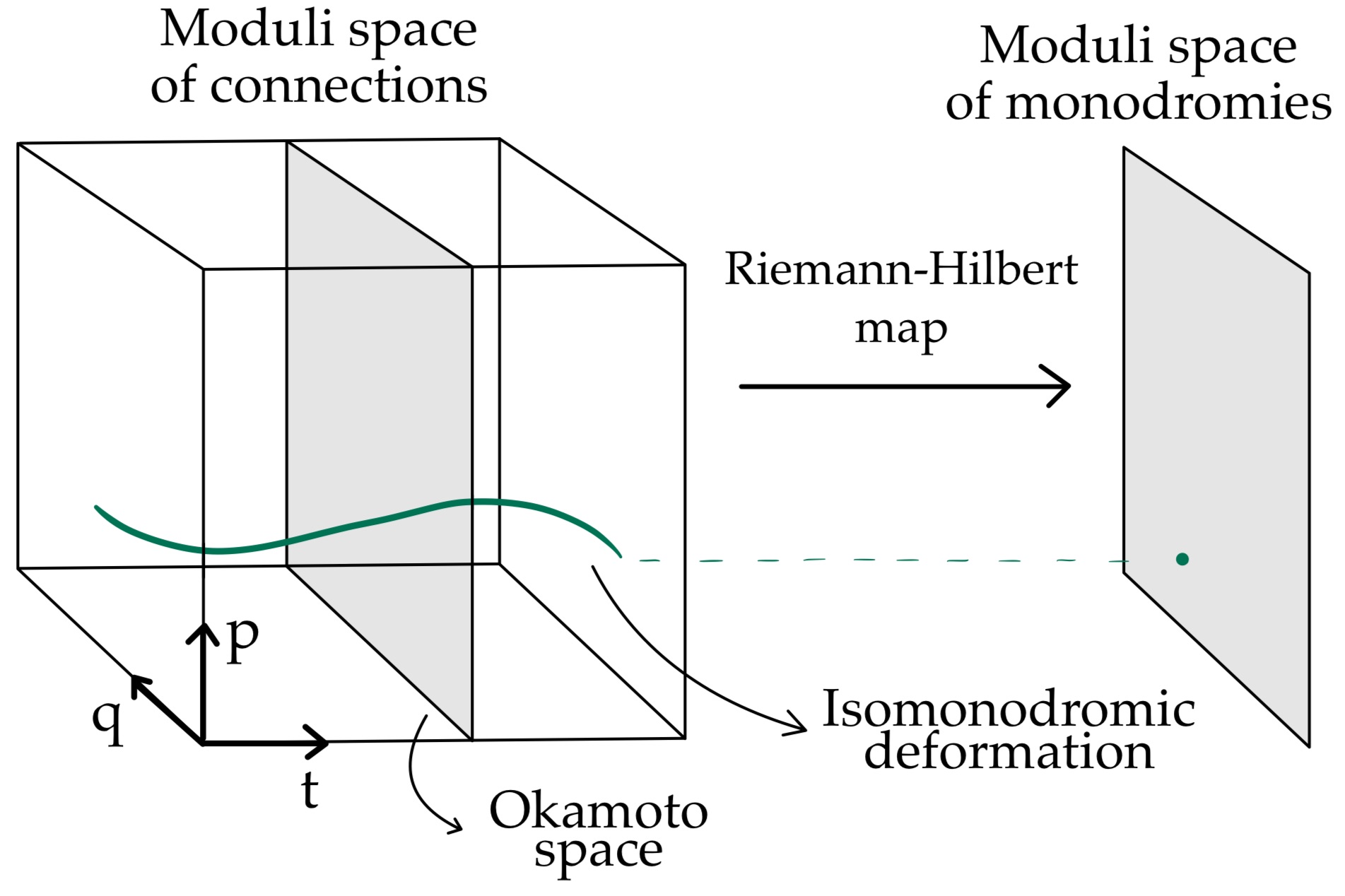}
\end{center}

The goal of this work is to study the isomonodromic foliation inside the compactification $\overline\Conn$, understanding the asymptotic behaviour of the leaves in $\pi^{-1}(0)$ and $\pi^{-1}(\infty)$, extending the compactification of the fifth Painlevé foliation. To do that, we study the local Hamiltonian vector field, as it appears in the work of Y. Ohyama \cite{Ohyama_2006}, whose integral curves are precisely the solutions of the fifth Painlevé equation defined in a dense open set of $\Conn$. Since the geometric description of $\overline\Conn$ comes with a precise system of coordinates, we are able to explicitly express the extension of the vector field to the whole compactification, leading to the following results.

\begin{theoremA}\label{thm:D}
	Let us denote by $\mathcal F^V$ the isomonodromic foliation induced by the fifth Painlevé equation. The global Okamoto divisor is invariant for $\mathcal F^V$, and the function $t$ is a first integral for the restricted foliation. Moreover, the foliation $\mathcal F^V$ is transverse to the fibration $\Conn\xrightarrow{t} \C^*$ and makes it into a local system
	\[(\Conn,\mathcal F^V)\xrightarrow{t} \C^*.\]
\end{theoremA}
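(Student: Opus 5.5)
The plan is to work chart by chart in the explicit atlas of $\Conn$ from \cite{Matt}. In each chart the fibration is given by the coordinate $t$, and we write the isomonodromic foliation as generated by the time-dependent Hamiltonian vector field of Ohyama \cite{Ohyama_2006}:
\[X=\frac{\partial}{\partial t}+\frac{\partial H}{\partial p}\frac{\partial}{\partial q}-\frac{\partial H}{\partial q}\frac{\partial}{\partial p},\]
where $H(q,p,t)$ is the Painlevé V Hamiltonian. On the dense open set of $\Conn$ where $(q,p,t)$ are coordinates, $X$ is a regular vector field with $dt(X)=1$. The first step is to transport $X$ through the coordinate changes of the eight blow-ups of $\F_2$ (taken in family over $t\in\C^*$). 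In each blow-up chart $(x,y,t)$ we compute the pushforward, clear its denominators to obtain a holomorphic vector field $\tilde X$ generating $\mathcal F^V$ there, and record the factor $f$ with $\tilde X=f\cdot X$. These coordinate computations are routine but long. I would organize them along the chain of blow-ups, since each chart is obtained from the previous one by a monomial or affine substitution.

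The second step is to read off the statements from the local forms. On the charts meeting the Okamoto divisor, the exceptional curves and the strict transforms of the fibers over $0,1,\infty$ of $\F_2$, the polar locus of $X$ becomes the divisor $\{f=0\}$. I expect that, after normalization, $\tilde X$ is tangent to each component: its component transverse to that divisor vanishes on it. I also expect that $dt(\tilde X)=f$ vanishes identically on the divisor, since the Hamiltonian becomes infinite there and the $\partial_t$ coefficient picks up the vanishing factor. So the global Okamoto divisor is invariant and $t$ is constant along its leaves, i.e.\ $t$ is a first integral of the restricted foliation. Here one must check, component by component, that the factor $f$ actually vanishes along the divisor, rather than just being a unit.

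Transversality should then be read off on the complement, the space of initial conditions $\Conn$ proper. Here every point lies in a chart where the normalized field can be taken with $dt(\tilde X)\equiv 1$: this is the content of Okamoto's theorem that in the final blow-up charts away from the divisor the Hamiltonian system stays polynomial in the new coordinates. Hence $\mathcal F^V$ is a regular foliation transverse to the fibres $Ok_t^V$ of $t$. To get the local system we need that leaves can be lifted along any path in $\C^*$. This requires properness-type control: no solution escapes to the divisor in finite time. We argue as follows. The divisor is invariant, and the complement of $\Conn$ in the $t$-family of compact surfaces $\F_2$-blow-ups is exactly that divisor, which fibres over $\C^*$ properly. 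So a leaf lifted over a path $\gamma\colon[0,1]\to\C^*$ stays in a compact set and, by invariance of the divisor, in $\Conn$, and the flow of the lifted vector field extends over all of $[0,1]$. This is Ehresmann's argument, which gives local triviality of $(\Conn,\mathcal F^V)\to\C^*$ with holonomy as the monodromy.

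The main obstacle is the first step near the last blow-ups over the fibre above $1$. There the centres depend on $t$, so the coordinate change mixes $t$ into $(x,y)$ and the pushed-forward field acquires extra $\partial_x,\partial_y$ terms from $\partial_t$. The cancellation showing regularity with $dt=1$ off the divisor must be checked there carefully. I would first parametrize the moving points explicitly in $t$ and verify the cancellation to the needed order.
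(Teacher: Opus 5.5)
Your first two steps follow the paper's route. The paper pushes $X^V$ into the coordinates $(t,q,P^0)$, $(t,a,P^1)$, $(t,q,\beta^1_\pm)$ and $(t,q,\alpha^i_\pm)$ of $\overline\Conn$. On $\mathcal Q^0,\mathcal Q^1,\mathcal Q^\infty$ and $\mathcal F^1_\pm$ it finds the normalized field vertical (e.g.\ $P^0(P^0-\kappa_0)\partial_{P^0}$, $\beta^1_-t^2\partial_{\beta^1_-}$), which gives invariance with $dt=0$. On the last exceptional divisors $\mathcal E^i_\pm$ it finds a $\partial_t$-coefficient $t$ or $t^2$, a unit on $\C^*$, which gives transversality. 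Be careful with which exceptional divisors lie in the Okamoto divisor: only the intermediate $\mathcal F^1_\pm$ do (together with the $\mathcal Q^i$ and the section at infinity). The $\mathcal E^i_\pm$ do not, and the foliation crosses them. For the local-system part the paper gives no argument and refers to Okamoto.

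Your own argument for that part has a genuine gap: invariance of the divisor plus properness of the compact family do not imply that leaves lift along paths. Compactness only says that a lift $\tilde\gamma(s)$, $s<s^*$, accumulates somewhere in the compact family. Invariance only forbids a leaf from entering the divisor at a regular point of $\tilde X$ in finite $\tilde X$-time. Since $dt(\tilde X)=f$ vanishes on the divisor, the $t$-normalized field $X=\tilde X/f$ blows up there, and a finite $t$-interval can correspond to infinite $\tilde X$-time. So the lift can converge, at a finite value of $t$, to a point of the divisor (typically a zero of $\tilde X$) without ever meeting it.

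For instance, on $\C^*_t\times\P^1_u\times\P^1_y$ take the foliation generated by $y\partial_t+\partial_u-y\partial_y$, which reads $\partial_t+w\partial_u+w^2\partial_w$ for $w=1/y$. It leaves $\{y=0\}$ invariant, $t$ is constant on that divisor, and the foliation is transverse to the fibres elsewhere. Yet the leaf through $(t,u,y)=(1,0,1)$ is $y=2-t$, $u=-\log(2-t)$, which runs into $(2,\infty,0)$, so the segment $[1,2]$ has no lift from that point.

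Some genuinely additional input is needed. One option is the analytic Painlevé property of PV (meromorphic continuation of solutions along paths in $\C^*$), which is what Okamoto's result rests on. Another is the Riemann--Hilbert map: for $U\subset\C^*$ simply connected, $(t,RH)$ from $\pi^{-1}(U)$ (off the Okamoto divisor) to $U\times\chi^V_\Theta$ is bijective by Theorem~\ref{thm:irregRH}. It is holomorphic by analytic dependence of the monodromy and Stokes data on $t$, hence a biholomorphism sending leaves to horizontal slices, which is exactly local triviality. A third is an a priori estimate near each divisor component that rules out accumulation at finite $t$.
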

In other words, inside $\overline\Conn$ we find the Okamoto-Painlevé pairs relative to the fifth Painlevé equation, and we show that the Okamoto divisor is invariant for the isomonodromic foliation. Let us show clearly that $\mathcal F^V$ is transverse to the fibration, but we do not show again (as Okamoto already did) that it is a local system. Moreover, since the Okamoto spaces coincide with the spaces of initial conditions for the fifth Painlevé equation, Theorem \ref{thm:D} is a geometric counterpart of the Painlevé property.

The second goal of this work is to describe the foliation $\mathcal F^V$ on the boundary components $\pi^{-1}(0)$ and $\pi^{-1}(\infty)$ inside $\overline\Conn$. We recall, as proven in \cite{Matt}, that $\pi^{-1}(0)$ has two irreducible components both isomorphic to $\F_1$, while $\pi^{-1}(\infty)$ has three irreducible components: two of them are isomorphic to $\F_1$, and the one in the middle is isomorphic to $\F_0$.
\begin{theoremA}\label{thm:penc}
	The boundary components $\pi^{-1}(0)$ and $\pi^{-1}(\infty)$ are invariant for $\mathcal F^V$. The foliation restricted to each $\F_1$ component of $\pi^{-1}(0)$ and $\pi^{-1}(\infty)$ is a pencil of conics, while restricted to the $\F_0$ component of $\pi^{-1}(\infty)$ it is a pencil of genus-one curves.
\end{theoremA}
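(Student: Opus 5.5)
The plan is to work chart by chart in the explicit coordinate systems of $\overline\Conn$ from \cite{Matt} (Theorem 4.25 there). Near each irreducible component of $\pi^{-1}(0)$ and $\pi^{-1}(\infty)$ I will write Ohyama's Hamiltonian vector field $X_H$, which has a $\partial_t$ component, in those coordinates. The time coordinate is $t$ near $0$ and $s=1/t$ near $\infty$, possibly composed with the blow-up coordinates that separate the several components. I then multiply by the minimal power of the local equation of the boundary divisor needed to make the field holomorphic and not identically zero along the boundary. This gives a saturated vector field $\widetilde X$ defining $\mathcal F^V$ near the boundary.

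\textbf{Invariance.} For invariance I check that the component of $\widetilde X$ along $\partial_t$ (resp. $\partial_s$), or along the normal coordinate of the relevant component, vanishes on the divisor. For $t$ near $0$ the Hamiltonian system has the form $t\,\dot q=\dots$, $t\,\dot p=\dots$, so after multiplying by $t$ the $t$-component becomes $t\,\partial_t$. This vanishes at $t=0$, while the $q,p$ components generally do not. So $\{t=0\}$ is invariant, and the induced foliation is given by the restricted planar field. Near $\infty$ the analogous computation uses $s=1/t$, where $\partial_t=-s^2\partial_s$. The rescaling there is by a higher power, and I expect the middle component $\F_0$ to appear after a weighted change of variables $(q,p)\mapsto (s^a q, s^b p)$ or similar. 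I will take the exponents from \cite{Matt} and verify that the normal component of $\widetilde X$ is divisible by the component's equation.

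\textbf{Identifying the leaves.} On each component I obtain an explicit polynomial planar vector field $Y$ in affine coordinates $(x,y)$. I then look for a first integral of the form $F=P/Q$ with $Y(P)Q-PY(Q)=0$. Heuristically, the restricted field should be the autonomous limit Hamiltonian: as $t\to0$ the $t$-dependent terms of $H$ drop out, and the limit $H_0(q,p)$ is conserved.

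On the $\F_1$ components I expect $H_0$ to be quadratic in suitable coordinates. Viewing $\F_1$ as $\P^2$ blown up at a point, its level sets $\{H_0=c\}$ then form a pencil of conics. To confirm this, I check that the closures in $\P^2$ of the curves $\{P-cQ=0\}$ are conics and that all of them pass through the four base points.

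On the $\F_0$ component at $\infty$ the limit Hamiltonian should instead have bidegree $(2,2)$ on $\P^1\times\P^1$. The pencil $\{P-cQ=0\}$ then consists of curves of arithmetic genus $(2-1)(2-1)=1$. To show the general member is smooth, I check that the discriminant in $c$ is nonzero for generic $c$.

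\textbf{Main obstacle.} The hard step is the computation at $t=\infty$. There the vector field is highly degenerate (the irregular singularity is merging), so the correct rescaling exponent and the chart on the middle $\F_0$ component have to be chosen carefully to get a non-trivial saturated field. Guessing the first integral also becomes less automatic there. To handle this, I will first compute the Newton polygon of $X_H$ in $(s,q,p)$ to find the weights. I will then read off the dominant quasi-homogeneous part of $H$, which should give both the restricted field and its first integral. As a last step, I check that $F$ extends consistently across the overlaps of charts and across the intersection curves between components.
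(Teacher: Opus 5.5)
Your plan has the same skeleton as the paper's argument. You work in the charts of \cite{Matt}, push forward and saturate the Hamiltonian field near each boundary component, and get invariance from the vanishing normal component. You then exhibit a first integral of the restricted field and identify its level curves. The two routes differ in two steps.
\begin{itemize}
\item \emph{Source of the first integral.} The paper takes the residual spectral datum at the node of the limiting irregular stable nodal curve. That quantity has a monodromic reason to be constant on leaves, it works on every component at once, and it is checked by direct computation. You guess the first integral from the leading part of $H$, which avoids the nodal-curve analysis of \cite{Matt}.
\item \emph{Type of the level curves.} The paper gets genus $0$ (resp.\ $1$) from Riemann--Hurwitz on the ruling. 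You use $\F_1=\mathrm{Bl}_{pt}\P^2$ and bidegree $(2,2)$ on $\F_0$, which proves ``conics'' rather than merely ``rational curves''. This needs the chart adapted to the ruling. On $\mathcal A_0$ the level curves are quartics in $(q,P^0)$. They become conics through four base points only in $(u,P^0)$ with $u=1/(q-1)$, matching the transition $P^1_0=(q-1)P^0$ near $q=1$.
\end{itemize}

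Your heuristic is, however, not correct on every component. The foliation is the kernel of the Cartan form $\omega=dp\wedge dq-dH\wedge dt$.
\begin{itemize}
\item \emph{Where it works.} It works on $\mathcal A_0$, where $tH|_{t=0}$ is the paper's first integral minus $\kappa_1^2/4$. It also works on $\mathcal B^0_\infty$ and $\mathcal A_\infty$. On these three divisors the pole of $dH\wedge dt$ has strictly higher order than that of $dp\wedge dq$.
\item \emph{Where it fails.} It fails on $\mathcal B^1_0$. In the coordinates $(A,\tilde q,P^1_0)$ one has $dp\wedge dq=dP^1_0\wedge d\tilde q/(q\tilde q)$ and $dH\wedge dt=d(tH)\wedge(dA/A+d\tilde q/\tilde q)$. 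Both have simple poles along $\tilde q=0$, and the residue of $\omega$ is $d(P^1_0-tH)$. So the first integral is $(tH-P^1_0)|_{\tilde q=0}=P^1_0(P^1_0+A-\kappa_1)$, which is the paper's node datum up to an affine change. Your guess $tH|_{\tilde q=0}=P^1_0(P^1_0+A-\kappa_1+1)$ is not conserved: its derivative along the saturated restricted field $(A+2P^1_0-\kappa_1)\partial_A-P^1_0\partial_{P^1_0}$ is $-P^1_0$.
\end{itemize}
The factor $A$ in the $\partial_{P^1_0}$-coefficient of the paper's displayed field appears to be a misprint. The paper's own first integral is preserved by the field written above, not by the displayed one.

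Your verification step would detect this failure. The restricted field on $\mathcal B^1_0$ is affine-linear and integrable by hand, so the plan survives. The uniform replacement for ``take the dominant part of $H$'' is ``take the residue of $\omega$ along the divisor'', or the paper's node datum. Also, on $\mathcal B^1_0$ the base locus of the pencil is two points of tangency on the fibre $A=\infty$, not four distinct points.
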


To prove Theorems \ref{thm:D} and \ref{thm:penc} we need to find first integrals of the restricted foliation. To do that, we should understand how the limit connections lying on these boundary components are made, and deduce some monodromy invariants. In \cite{Matt} the following result is proven.
\begin{lem*}
    The connections lying on the boundary components of $\overline\Conn$ are connections defined on irregular stable nodal curves (as in Figure below). They correspond to the data of two distinct connections: a (possibly confluent) Heun connection on the upper smooth component and a (possibly confluent) hypergeometric connection on the lower. Moreover, the node of the curve is a pole for both connections.
\end{lem*}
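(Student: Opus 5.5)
The statement is the Lemma recalled from \cite{Matt}, so the plan is to rebuild it from how $\overline\Conn$ is constructed there rather than from new computations. I will view a PV connection as a rank-two connection on $\P^1$ with poles at $0$, $t$-dependent data near $1$, and $\infty$. After the change of variable $x\mapsto x/t$ (or its inverse), the irregular singularity of order two at $1$ becomes a confluence of two simple poles at $1$ and $t$ (or $1/t$). Letting $t\to 0$ or $t\to\infty$ therefore makes two of the marked points collide. The natural compactification replaces the colliding points by a stable nodal curve: two $\P^1$ components meeting at a node, the colliding points moving onto the bubbled component. I will first check that the coordinates on $\pi^{-1}(0)$ and $\pi^{-1}(\infty)$ from Theorem 4.25 of \cite{Matt} are exactly the coordinates of this degeneration. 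That is, on each boundary chart the family of curves is the standard degeneration $xy=t$ (or $xy=1/t$), and the connection matrix extends to the central fibre.

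The second step is a count of singularities on each component. Write the connection as $d-A(x,t)\,dx$. In the coordinate $x$ near the component keeping $0$ (respectively $\infty$), the limit $t\to0$ gives a connection on the upper component with poles at the surviving marked points and at the node. In the rescaled coordinate $y=x/t$ we get the lower component, with poles at the points that collided and at the node.

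The upper component then carries four singular points counted with multiplicity, including the node. This is a Heun connection, or a confluent Heun connection when the irregular order-two pole at $1$ survives on that component. The lower component carries three, including the node. This is a hypergeometric connection, or a confluent one when the irregular part falls there. Which case occurs depends on the component of the boundary fibre: the two $\F_1$ components at $0$, and the $\F_1,\F_0,\F_1$ components at $\infty$. I will split into cases according to these charts.

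The node is a pole on both sides by a residue argument. The residue of the limit connection at the node on the upper component equals the sum of the residues of the points that migrated to the lower component, and minus this appears at the node on the lower component. Both residues are generically nonzero because the local exponents $\Theta$ are fixed and non-resonant. I would also check that the formal irregular type at $1$ passes entirely to one side.

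The main obstacle is justifying that the limit really is a pair of connections of the stated type. The difficulty is showing that no apparent singularity or extra pole appears, and that the limit stays irregular stable rather than degenerating further. I would handle this by using the explicit normal forms and coordinates of \cite{Matt} on each affine chart of $\pi^{-1}(0)$ and $\pi^{-1}(\infty)$. In each chart I would verify directly that the limiting matrices $A_{\mathrm{up}}$ and $A_{\mathrm{low}}$ are holomorphic in the chart coordinates with the stated pole structure.
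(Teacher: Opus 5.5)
\textbf{There is a genuine gap: the degeneration mechanism you set up in your first step is wrong, and the rest of the plan is organised around it.} Your final step (explicit limits of the normal form in the boundary charts of \cite{Matt}) is the kind of argument behind the result, which this paper only cites from \cite{Matt}.

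\textbf{The first step.} The change of variable $x\mapsto x/t$ is a M\"obius map, so it preserves pole orders. It cannot turn the order-two pole at $x=1$ into simple poles at $1$ and $t$. In the PV setting $t$ is not a pole position at all. It is the irregular type: the tangent vector in the jet $j^1(1)=(1,t)$, i.e.\ the non-zero eigenvalue of the leading matrix. What degenerates at $t=0,\infty$ is this jet. The nodal curve appears only after renormalising it by a M\"obius map $f$ fixing $1$ with $Df(1)\cdot t=1$.
\begin{itemize}
\item For $t\to 0$ this is $f_{A_0}$ from Section~\ref{secirrcurv}. It fixes $1$ and sends $0,q,\infty$ to a common point. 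So on $\mathcal A_0$ \emph{three} points collide, none of them equal to $1$. The curve splits into a component carrying $2[1]$ and the node (confluent hypergeometric side) and one carrying $0,q,\infty$ and the node (Heun side, with $q$ apparent).
\item On $\mathcal B^1_0$, where $t=A(q-1)$, one finds $f_{A_0}(q)\to A/(A-1)$. So $q$ stays with $1$ and only $0,\infty$ collide, which is why the Heun side becomes confluent there.
\end{itemize}
Consequences for your plan:
\begin{itemize}
\item Your proposed check that the coordinates of \cite{Matt} are ``exactly the coordinates of this degeneration'' would fail.
\item Your model contradicts your own later (correct) expectation that the irregular type passes entirely to one side.
\item The case analysis over boundary components, which is the actual content of the lemma, is never set up.
\end{itemize}

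\textbf{The limits need specific coordinates.} They do not exist in arbitrary fibre coordinates. You need the jet renormalisation, a gauge transformation, and the specific coordinates $P^0,P^1,P^1_0,P^\pm_\infty$ dictated by $\O_{\widetilde\M}(D)$ and by the blow-ups of the special sections. On $\mathcal A_\infty$, in the trivialising coordinate $P^1$, there is no limit connection at all. There is only the rescaled Higgs field of Proposition~\ref{higgs}, and the limit curve is not stable. Connections appear only on the exceptional divisors with coordinates $P^\pm_\infty$. So your blanket claim that in each chart the connection matrix extends to the central fibre is false as stated.

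\textbf{The residue argument needs repair.}
\begin{itemize}
\item Your additivity claim only makes invariant sense for traces. Matrix residues at different points depend on the frame, and at the irregular pole they are not even gauge invariant.
\item The node's spectral datum is \emph{not} determined by $\Theta$. It is exactly the non-constant first integral of Section~\ref{secvectfield}, e.g.\ $(\kappa_1+1)^2+4P^1_0(A+P^1_0-\kappa_1)$ on $\mathcal B^1_0$. So ``nonzero because $\Theta$ is fixed'' is not a valid justification.
\end{itemize}
What $\Theta$ does fix, up to integers, is the trace of the residue at the node, by the residue theorem for $\det\nabla$ on each component. That trace is what shows the node is a genuine pole on both sides for generic $\Theta$.
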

\begin{center}
    \includegraphics[width=9cm]{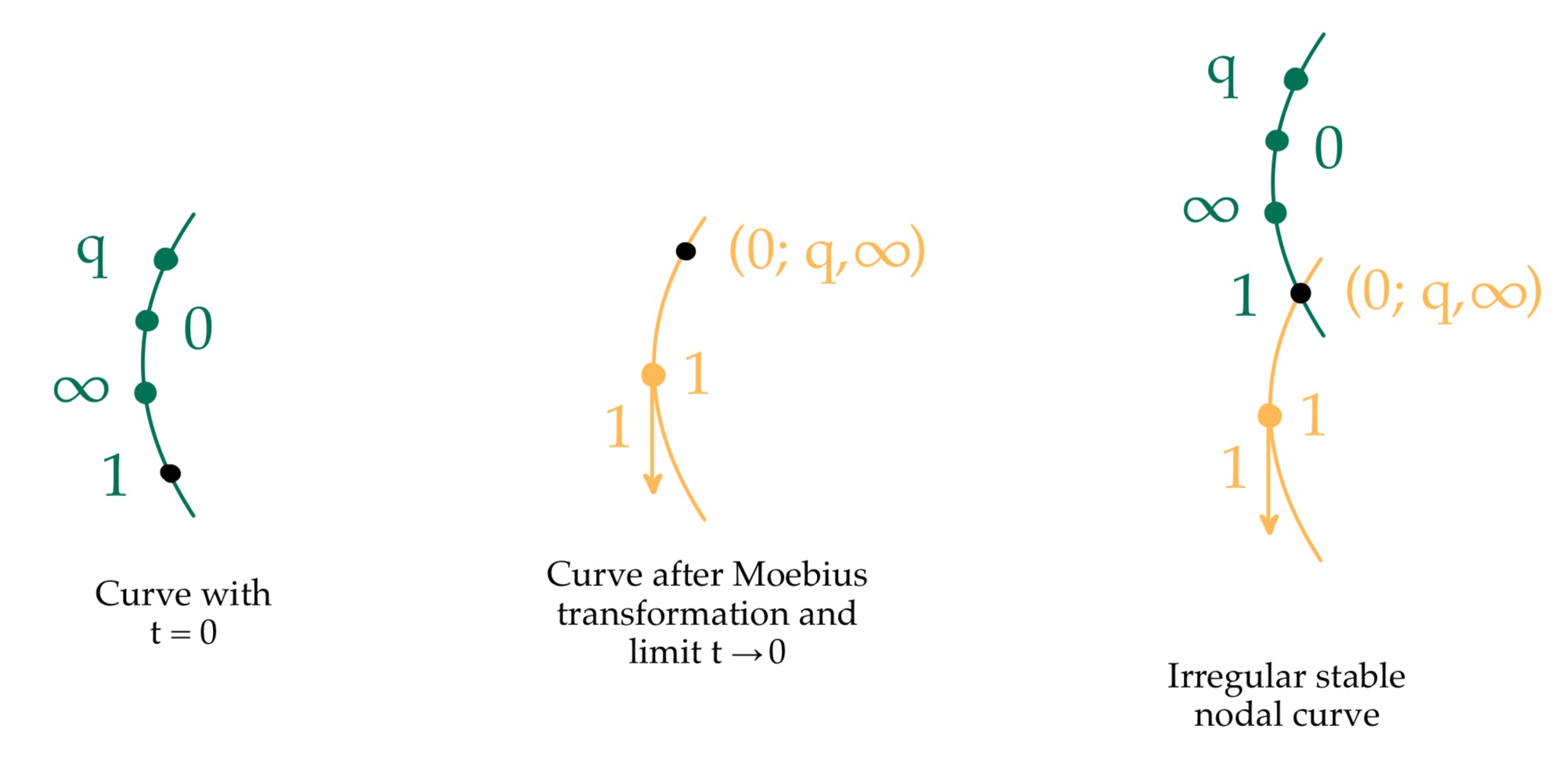}
\end{center}
We then deduce that, since we want the monodromy to remain untouched during the deformation, the residual spectral datum in the node must be the same for any connection lying on a isomonodromic leaf. We then prove the following.
\begin{lem*}
    The residual spectral data at the nodes of the irregular stable nodal curves are first integrals of the hamiltonian vector field restricted to the corresponding boundary component.
\end{lem*}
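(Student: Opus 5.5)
The plan has two strands. A conceptual argument shows why the residual data ought to be conserved. An explicit verification in the coordinates of $\overline\Conn$ from \cite{Matt} then proves it.

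\textbf{Step 1: conceptual reason.} A point of $\pi^{-1}(0)$ or $\pi^{-1}(\infty)$ is, by the preceding Lemma, a pair of connections glued at the node of an irregular stable nodal curve. These are a (possibly confluent) Heun connection $\nabla_{\mathrm{up}}$ and a (possibly confluent) hypergeometric connection $\nabla_{\mathrm{low}}$. The node is a simple pole for both. The residual spectral datum is the pair of eigenvalues of $\mathrm{Res}_{\mathrm{node}}\nabla_{\mathrm{up}}$, which by the gluing is also determined by the residue of $\nabla_{\mathrm{low}}$.

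The local monodromy around the node is the monodromy along a loop that, before degeneration, separates the poles colliding as $t\to 0$ or $t\to\infty$ from the remaining ones. Its conjugacy class is therefore an isomonodromic invariant. Away from resonances, that class determines the residue eigenvalues modulo $\Z$. Since the eigenvalues vary holomorphically along a leaf inside the boundary component, they must in fact be constant there.

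\textbf{Step 2: choice of coordinates.} The conceptual argument above only works away from resonances and assumes the limit leaves are honest isomonodromic deformations of nodal connections. To make the statement rigorous, I would therefore work chart by chart.

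\begin{enumerate}
\item In each affine chart of $\overline\Conn$ near $\pi^{-1}(0)$, choose the coordinates of \cite{Matt}, with $s=t$ (respectively $s=1/t$ near $\pi^{-1}(\infty)$) a local equation of the boundary.
\item Write Ohyama's Hamiltonian vector field $X$ \cite{Ohyama_2006} in these coordinates.
\item Multiply $X$ by the appropriate power of $s$ so that it extends holomorphically and non-identically zero to $s=0$.
\item Check that the $\partial_s$-component of the extended field vanishes on $\{s=0\}$. This proves the invariance asserted in Theorem \ref{thm:penc}, and the restriction $X_0$ is then a vector field on the boundary component.
\end{enumerate}

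\textbf{Step 3: the computation.} Express the residue eigenvalues at the node, or equivalently a symmetric function of them such as $\operatorname{tr}$ or $\det$ of the residue, as a rational function $H$ of the boundary coordinates. This uses the explicit description of the limit connections in \cite{Matt}. Then verify $X_0(H)=0$.

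The cleanest way to do this is to note that the rescaled Hamiltonian $s^k\mathcal H$ restricts to the boundary as a function of $H$ alone. The restricted flow is still Hamiltonian for the limiting Poisson structure. I expect $H$ to be a Casimir of that structure, namely the spectral data of the node, which become fixed parameters of the lower connection. These two facts together give $X_0(H)=0$.

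\textbf{Main obstacle.} The hardest step is the bookkeeping of Step 3. It requires matching the abstract residue at the node with the explicit blow-up coordinates on each $\F_1$ and $\F_0$ component, and handling the charts where the node data are hidden in exceptional divisors. There the first integral only appears after clearing denominators, as a ratio of two sections, which is exactly what produces the pencils of Theorem \ref{thm:penc}.

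I would first try the middle $\F_0$ component of $\pi^{-1}(\infty)$. There two nodes are present, and compatibility of the two residues should yield a single genus-one pencil. I would then treat the $\F_1$ components by the same method.
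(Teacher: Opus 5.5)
The part of your plan that is actually rigorous matches the paper's proof. You take the node data computed in \cite{Matt} as rational functions in the boundary charts, restrict the rescaled field, and verify $X_0(H)=0$. The paper does exactly this by direct computation on $\mathcal B^1_0$, $\mathcal B^0_\infty$ and $\mathcal A_0$, with $\mathcal B^\infty_\infty$ following by symmetry. Your Step 1 is the same heuristic the paper gives as motivation. Two other parts of your plan, however, would not work as written.

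\textbf{The shortcut in Step 3.} The node datum cannot be a Casimir of the limiting bracket.
\begin{itemize}
\item On each component that bracket is generically nondegenerate: $\{q,P^0\}=-q$ on $\mathcal A_0$, $\{A,P^1_0\}=A$ on $\mathcal B^1_0$, $\{R,P^0\}=-R$ on $\mathcal B^0_\infty$. So its only Casimirs are constants.
\item If the restricted Hamiltonian were a function of a Casimir, the restricted field would vanish identically, which it does not.
\end{itemize}
What is true is that the node datum is, up to an affine change, the Hamiltonian of the restricted flow itself, so conservation is just $\{h,h\}=0$.

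Your claim that $s^k\mathcal H$ restricts to a function of the node datum holds on two components. On $\mathcal A_0$, $tH^V|_{t=0}$ is the node datum minus $\kappa_1^2/4$. On $\mathcal B^0_\infty$, the node datum equals $2H^V|_{T=0}-\kappa_0-1$.

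It fails on $\mathcal B^1_0$. There $tH^V|_{\tilde q=0}=P^1_0(A+P^1_0-\kappa_1+1)$, which is not a function of $(\kappa_1+1)^2+4P^1_0(A+P^1_0-\kappa_1)$. Its derivative along the restricted field is $X(P^1_0)=-AP^1_0\neq 0$, so it is not even a first integral.

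The reason is that the adapted coordinate $A=t/(q-1)$ depends on $t$. Hence the term $t\partial_t$ of $tX^V$ becomes the tangential component $A\partial_A$ in the chart $(A,\tilde q,P^1_0)$. On $\{\tilde q=0\}$ this is the Hamiltonian field of $-P^1_0$, so the true Hamiltonian is $tH^V-P^1_0=P^1_0(A+P^1_0-\kappa_1)$. You must either include this correction coming from the $t$-dependence of the chart, or simply differentiate, as the paper does.

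\textbf{Starting with the middle $\F_0$ component $\mathcal A_\infty$.} This leads nowhere for this lemma, because there is no pair of node residues producing the genus-one pencil.
\begin{itemize}
\item In the trivialising coordinate $P^1$, the limits are not connections on stable nodal curves but Higgs fields (Proposition \ref{higgs}). The genus-one pencil is cut out by the Higgs coefficient $qP^1(P^1+1)/(q-1)^2$.
\item In the coordinates $P^\pm_\infty$, where genuine nodal connections live, the restricted field is $\mp q^2\partial_q$. The connections along a leaf coincide and the leaves are lines.
\end{itemize}
The real content of the lemma lies on the four $\F_1$ components, and that is where your computation has to be done.
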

Thanks to this lemma we are able to  describe the integral curves and understand the isomonodromic deformations along all the boundary components.

\vfill

\textit{Acknowledgements:}\\
This work is part of my Phd project, financed by the France 2030 program, Centre Henri Lebesgue ANR-11-LABX-0020-01, and Rennes university.

I would like to thank my advisor Frank Loray for guiding me through this beautiful journey, Jorge Vitorio Pereira for the rich discussions about birational geometry of foliations and Matilde Maccan for her mathematical and emotional support.

I also received a financial support form Collège doctorale de Bretagne, Rennes Metropole, EUR caps, IRIS-E, and the RFBM program for my mobility in Rio de Janeiro.

\newpage

\section{PV Connections}
The contents of this section can be be found in details in Section 1 of \cite{Matt}. To fix notations and make the discussion more clear, we repropose here the main definitions. In the end, we describe the irregular monodromy.
\begin{defn}
   A linear meromorphic connection over $\P^1$ is a triple $(E,D, \nabla)$, where 
    \begin{itemize}
    \item $E\to \P^1$ is a holomorphic vector bundle, 
        \item $D$ is an effective divisor of $\P^1$,
        \item $\nabla$ is a $\C$-linear application $\nabla\colon \mathcal E \to \mathcal E\otimes \Omega^1_{\P^1}(D),$
    satisfying the Leibniz rule given by the $\O_X$-module structure of the sheaf $\mathcal E$ of holomorphic sections: $\nabla (f\cdot \sigma)=df\cdot\sigma+f\cdot\nabla\sigma$.
    \end{itemize} 
    The divisor $D$ is called \emph{polar divisor} and it describes the order and the position of the poles that the connection must have. We recall the notation $\Omega^1_{\P^1}(D):=\Omega^1_{\P^1}\otimes\O_{\P^1}(D)$. From now on, we will omit the $\P^1$ in the sheaves notation.
\begin{defn}
    A \textit{meromorphic gauge transformation} $\Psi\colon(E,D, \nabla)\to (E,D, \nabla)$ is a birational morphism of bundles $\Psi\colon E\to E'$ such that $\Psi^*\nabla'=\nabla$.
\end{defn}
We will say that two linear meromorphic connections are \textit{meromorphically gauge equivalent}, or just \textit{equivalent}, if there exists a meromorphic gauge transformation between them.
\end{defn}
\begin{defn}
    A point $a\in \P^1$ is called a \textit{singularity} (or a \textit{pole}) for the connection $(E,D,\nabla)$  if $a\in |D|$. Its \textit{Poincaré rank} is the integer number $\deg (D_{|a})-1$. 
\end{defn}
\begin{defn}
    A singularity is said \textit{logarithmic} if its Poincaré rank is zero. Otherwise, it is said \textit{regular} if it can be reduced via a (meromorphic) gauge transformation to a logarithmic singularity and \textit{irregular} if not.
\end{defn}
Meromorphic connections have an explicit local description: let $U\subseteq \P^1$ be a trivialising open set for the vector bundle $E$, then the connection here express as 
\[\nabla_U=d+\Omega_U\;\;\;\text{ for }\;\;\;\Omega_U\in \mathfrak{gl}_2(\Omega^1(D_{|U})).\]
The matrix $\Omega_U$ is called the connection matrix relative to the trivialising open set $U$. Note that it also depends on the chosen trivialisation. 

If $D_{|U}=n[a]$, then the connection matrix express
\[\Omega_U=\sum_{k=1}^n A_k^{(a)}\frac{dx}{(x-a)^k} +\text{holomorphic terms}\]
for $A_k^{(a)}\in \mathfrak{gl}_2(\C)$.
\begin{defn}
We call $\sum_{k=1}^n A_k^{(a)}\frac{dx}{(x-a)^k}$ the \textit{principal part} of $(E,D, \nabla)$ around the singularity $a\in \P^1$. Moreover, we call $A_1^{(a)}=\mathrm{Res}_{x=a}\Omega_U$ the \textit{residual matrix} of the connection $(\nabla,E,D)$ at $x=a$. 
\begin{oss}
    If $a$ is a logarithmic singularity, then the spectrum of $A_1^{(a)}$ is invariant under meromorphic gauge transformation, otherwise it is not. See for instance Example 2 in \cite{Matt}.
\end{oss}
\end{defn}
We are interested in giving a more precise definition of some residual data, invariant under the gauge action.
\begin{defn}
    The \textit{residual spectral datum} of a connection $(E, D, \nabla)$ at a pole $x=a$ is computed as follows. Let $\Omega$ be any connection matrix for $(E, D, \nabla)$ in an open trivialising neighbourhood of $a$, and let $\Omega^{<0}$ be its principal part. Up to a meromorphic gauge transformation, we can diagonalise it. Let $D_1^{(a)}$ the new (diagonal) residual matrix and denote by $\{\kappa_a^+, \kappa_a^-\}$ its spectrum. It does not depend on the meromorphic gauge transformation we have chosen. We refer to the \textit{residual spectral data} of a connection $(E, D, \nabla)$ as the collection 
    \[\Big(\{\kappa_a^+,\kappa_a^-\}\Big)_{a\in|D|}.\]
\end{defn}
We specialise our investigation to a particular class of meromorphic connections over $\P^1$.
\begin{defn}
     A meromorphic connection $(E,D, \nabla)$ over $\P^1$ is of \textit{Painlevé type} if $\mathrm{rank} E=2$ and the minimal polar divisor within its meromorphic gauge equivalence class has degree four.
\end{defn}
\begin{oss}
    A complete classification of Painlevé type connections can be found in Section 1.4 of \cite{Ohyama_2006}. Up to Moebius transformations, the admissible Painlevé type divisors are then the following
    \begin{multicols}{2}
 \begin{itemize}
    \item $D^{VI}:=[0]+[1]+[t]+[\infty]$,
    \item $D^{V}:=[0]+2[1]+[\infty]$,
    \item $D^{IV}:=[0]+3[1]$,
\end{itemize}
\columnbreak
\begin{itemize}
    \item $D^{III}:=2[0]+2[1]$,
    \item $D^{II}:=4[\infty]$.
\end{itemize}
\end{multicols}
\end{oss}
We give now one of the main definitions of this work.
\begin{defn}
    A Painlevé type connection is of \textit{PV type} if the minimal divisor within its meromorphic gauge equivalence class is equivalent to $D^V$.
\end{defn}
Painlevé type connections with fixed residual spectral data are the first example of meromorphic connections over $\P^1$ with a positive dimensional moduli space (w.r.t. meromorphic gauge equivalence). Indeed, the dimension of their moduli space depends on the degree of the polar divisor and on the rank of the vector bundle. If we consider rank 2 connections, we need at least a polar divisor of degree four to have a moduli space of positive dimension. When the polar divisor has degree three, we have the so called hypergeometric systems. Their moduli space, with fixed residual spectral data up to integer shifts, corresponds to a point (see Section 1.3.5 of \cite{geompanv}).

\subsection{Normal Form}\label{secnormform}
In a recent paper, Diarra and Loray \cite{Diarra} proved that, up to meromorphic gauge transformation, any PV connection is equivalent to a connection $(\nabla, \O\oplus \O(2), D^V+[q])$ with the following connection matrix in $U_0:=\P^1\setminus\{\infty\}$
\[\Omega_0=\begin{pmatrix}0&1\\0&t\end{pmatrix}\frac{dx}{(x-1)^2}+\begin{pmatrix}0&-1\\0&-\kappa_1\end{pmatrix}\frac{dx}{x-1}+\begin{pmatrix}0&1\\0&-\kappa_0\end{pmatrix}\frac{dx}{x}+\begin{pmatrix}0&0\\-\rho^{V}&0\end{pmatrix}xdx+\begin{pmatrix}0&0\\\hat p&-1\end{pmatrix}\frac{dx}{x-q}+\begin{pmatrix}0&0\\\hat K&0\end{pmatrix}dx,\]
where 
\begin{itemize}
    \item $\kappa_0, \kappa_1, \kappa_\infty$ are generic residual spectral data;
    \item $\rho^V=\frac{(\kappa_0+\kappa_1-1)^2}{4}-\frac{\kappa_\infty^2}{4}$;
    \item $\hat K=\frac{\hat p^{2}}{q \left(q -1\right)^{2}}+\frac{\left(\mathit{\kappa_0} \left(q -1\right)^{2}+(\kappa_1-1)  q \left(q -1\right)-t q \right) \hat p}{q \left(q -1\right)^{2}}+\rho^V q +\frac{\hat p}{q -1}$;
    \item $t, \hat p$ and $q$ are three free parameters uniquely defining the connection.
\end{itemize}
\begin{oss}
        In the literature, as in \cite{Ohyama_2006}, the parameter $\kappa_1$ can be found under the form as $\theta+1$. The advantage of using $\theta$ is its more direct link to the Painlevé fifth equation \ref{secpaineq}.
\end{oss}
\begin{defn}
   A singularity for a meromorphic connection is said \textit{apparent} if it can be removed via a (meromorphic) gauge transformation. They have nilpotent or diagonalizable residual matrix with integer eigenvalues and trivial monodromy.
\end{defn}
\begin{lemma}
    The singularity $x=q$ in the normal form above is apparent.
\end{lemma}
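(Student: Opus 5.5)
The plan is to work locally at $x=q$ and build an explicit meromorphic gauge transformation that removes the pole. Near $x=q$ the connection matrix reads
\[\Omega_0=\Big(\frac{R}{x-q}+B(x)\Big)dx,\qquad R=\begin{pmatrix}0&0\\\hat p&-1\end{pmatrix},\]
where $B(x)=B_0+B_1(x-q)+\dots$ collects all the other terms. These are holomorphic at $q$ for generic $q\notin\{0,1,\infty\}$. The residue $R$ has eigenvalues $0$ and $-1$, which differ by an integer. So we are in the resonant case, and the singularity is apparent exactly when a single obstruction coefficient vanishes. The proof therefore reduces to computing that coefficient and checking that the choice of $\hat K$ in the normal form makes it zero.

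First I conjugate by the constant matrix $S=\begin{pmatrix}1&0\\\hat p&1\end{pmatrix}$, whose columns are eigenvectors of $R$ for the eigenvalues $0$ and $-1$. This makes the residue $\mathrm{diag}(0,-1)$, and the holomorphic part becomes $\tilde B=S^{-1}BS$. Next I apply the elementary gauge transformation $G=\mathrm{diag}(1,x-q)$, or the one with the roles of the entries exchanged, according to the sign convention for $\nabla$. The new matrix is
\[G^{-1}\tilde\Omega G+G^{-1}dG .\]
In it, the diagonal residue term $-1/(x-q)$ cancels against the term coming from $G^{-1}dG$. The $(1,2)$ entry acquires a factor $(x-q)$ and stays holomorphic. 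The only remaining polar term is $\tilde B_{21}(q)\,\frac{dx}{x-q}$ in the $(2,1)$ position, so the new residue is the nilpotent matrix $\begin{pmatrix}0&0\\ \tilde B_{21}(q)&0\end{pmatrix}$. The pole disappears, and the singularity is apparent, if and only if $\tilde B_{21}(q)=0$. Since $R$ is diagonalizable with integer eigenvalues, this vanishing also shows that the local monodromy is trivial.

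The main obstacle is the explicit verification that $\tilde B_{21}(q)=0$. Writing $B_0=\begin{pmatrix}b_{11}&b_{12}\\ b_{21}&b_{22}\end{pmatrix}$, a direct computation gives
\[\tilde B_{21}(q)=b_{21}+\hat p\,b_{22}-\hat p\,b_{11}-\hat p^2 b_{12}.\]
The entries $b_{ij}$ are read off from the normal form evaluated at $x=q$:
\begin{itemize}
\item $b_{11}=0$;
\item $b_{12}=\frac{1}{(q-1)^2}-\frac{1}{q-1}+\frac{1}{q}$;
\item $b_{22}=\frac{t}{(q-1)^2}-\frac{\kappa_1}{q-1}-\frac{\kappa_0}{q}$;
\item $b_{21}=-\rho^V q+\hat K$.
\end{itemize}
Substituting these, the condition becomes a linear equation in $\hat K$ whose solution is quadratic in $\hat p$. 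I expect it to reproduce exactly the stated formula for $\hat K$, including the $\hat p^2/(q(q-1)^2)$ term and the linear terms in $\hat p$ involving $\kappa_0$, $\kappa_1$ and $t$. This is routine but sign-sensitive bookkeeping. If a sign mismatch appears, I would first switch the convention in $G$ (that is, use $\mathrm{diag}(x-q,1)$ with the other eigenvalue ordering). Once this identity is established, the gauge transformation $SG$ is a meromorphic gauge transformation removing $x=q$ from the polar divisor, which proves the lemma.
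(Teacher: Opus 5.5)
Your proposal is correct and is the paper's argument: the composite $S\,\mathrm{diag}(1,x-q)=\begin{pmatrix}1&0\\\hat p&x-q\end{pmatrix}$ is exactly the gauge transformation $\Psi$ used in the paper, and you additionally spell out the obstruction check $\tilde B_{21}(q)=0$ that the paper leaves implicit. That check does close with your convention as written: $b_{12}=\frac{1}{q(q-1)^2}$, and the term $\frac{\hat p}{q-1}$ in $\hat K$ combines with $(\kappa_1-1)q(q-1)$ to give $\kappa_1 q(q-1)$, so $\tilde B_{21}(q)=0$ holds identically and no sign switch is needed.
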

\begin{proof}
    Let us consider the chart $U_0\subseteq \P^1$. If we apply the meromorphic gauge transformation
    \[\Psi(x)=\begin{pmatrix}1&0\\\hat p&x-q\end{pmatrix}\]
    the singularity disappears.
\end{proof}
It has been shown (\cite{Diarra}, \cite{Matt}) that a generic class of PV connections is uniquely represented by the three parameters $t,q,\hat p$ appearing in the normal form. 
\begin{oss}
    Let $f\in \mathrm{Aut}(\P^1)$ be a Moebius transformation. It acts trivially on the moduli space of connections via pull-back, since $(E,D, \nabla)$ and $f^*(E,D, \nabla)$ are equivalent. In particular $f^*$ acts the following way on the three parameters
    \[f^*\colon (t,q,\hat p)\mapsto (Df(1)\cdot t, f(q), \hat p).\]
    Notice that $f^*(E,D, \nabla)$ is no more in normal form since the poles have been moved.
\end{oss}

\subsection{Rational Irregular Curves and their Nodal Version} \label{secirrcurv}
To describe the moduli space of PV connections, we need the following definition, that has been introduced in \cite{irrcurv} and \cite{boalchirrcurv}. As we will precise in Section \ref{secmodsp}, the datum of a class of PV connections is equivalent to the data of a rational regular curve $(\P^1, D^V+[q])$ and an extra complex parameter $\hat p$. 
\begin{defn}
    An irregular curve $(\mathcal C, D, J)$ is the datum of a complex curve $\mathcal C$, an effective divisor $D=\sum_{i\in I} a_i[p_i]\in \mathrm{Div}(\mathcal C)$ and a collection of jets of coordinates $J=\big\{j^{a_i-1}\gamma_i\big\}_{i\in I}$, where $\gamma_i\colon \R_0\to \P^1$ is a smooth curve such that $\gamma_i(0)=p_i$. Sometimes we will omit $D$ or $J$ in the notation.
\end{defn}
\begin{es}
    We can see $\P^1$ with five punctures as a rational irregular curve $(\P^1 ,[a]+[b]+[c]+[d]+[e])$.
\end{es}
\begin{oss}
     An automorphisms $\phi\in\mathrm{Aut}(\P^1)$ acts on $(\P^1,D, J)$ as $(\phi(\P^1), \phi_*D, \phi_*J)$. If we call $p=\gamma(0)$, we have that $\phi_*\big(j^k\gamma\big)=\big(\phi(p), D\phi(p)\left(\gamma^{(1)}(0)\right),\dots,D^k\phi(p)\left(\gamma^{(k)}(0)\right)\big)$.
\end{oss}
In \cite{Matt} a compactification, inspired by the Deligne and Mumford's technics, of the moduli space of rational irregular curves $(\P^1, [a]+2[b]+[c]+[d])$ has been constructed. Up to Moebius transformation such irregular curves are equivalent to $(\P^1, D^V+[q])$. The different jets are
\[j^0(0)=0, \;\;\;\,j^1(1)=(1,t), \;\;\;\,j^0(\infty)=\infty,\;\;\;\, j^0(q)=q, \]
for $q\in \P^1\setminus\{0,1,\infty\}$ and $t\in T_1\P^1\setminus\{0\}$.
In particular the two free parameters are $t\in T_1\P^1$, a tangent vector of $\P^1$ at $x=1$, and $q\in \P^1$, a point. For any admissible value of $t$ and $q$, we get a different class of rational irregular curves, getting a moduli space
\[\mathcal M:=\Big\{q\in\P^1\setminus\{0,1,\infty\};\;\;t\in T_1\P^1\setminus \{0\}\Big\}\cong\P^1\times\P^1\setminus(Q^0\cup Q^1\cup Q^\infty\cup A_0\cup A_\infty),\]
where $Q^i:=\{q=i\}$ and $A_j=\{t=j\},$ as shown in the picture below.
\begin{center}
    \includegraphics[width= 9cm]{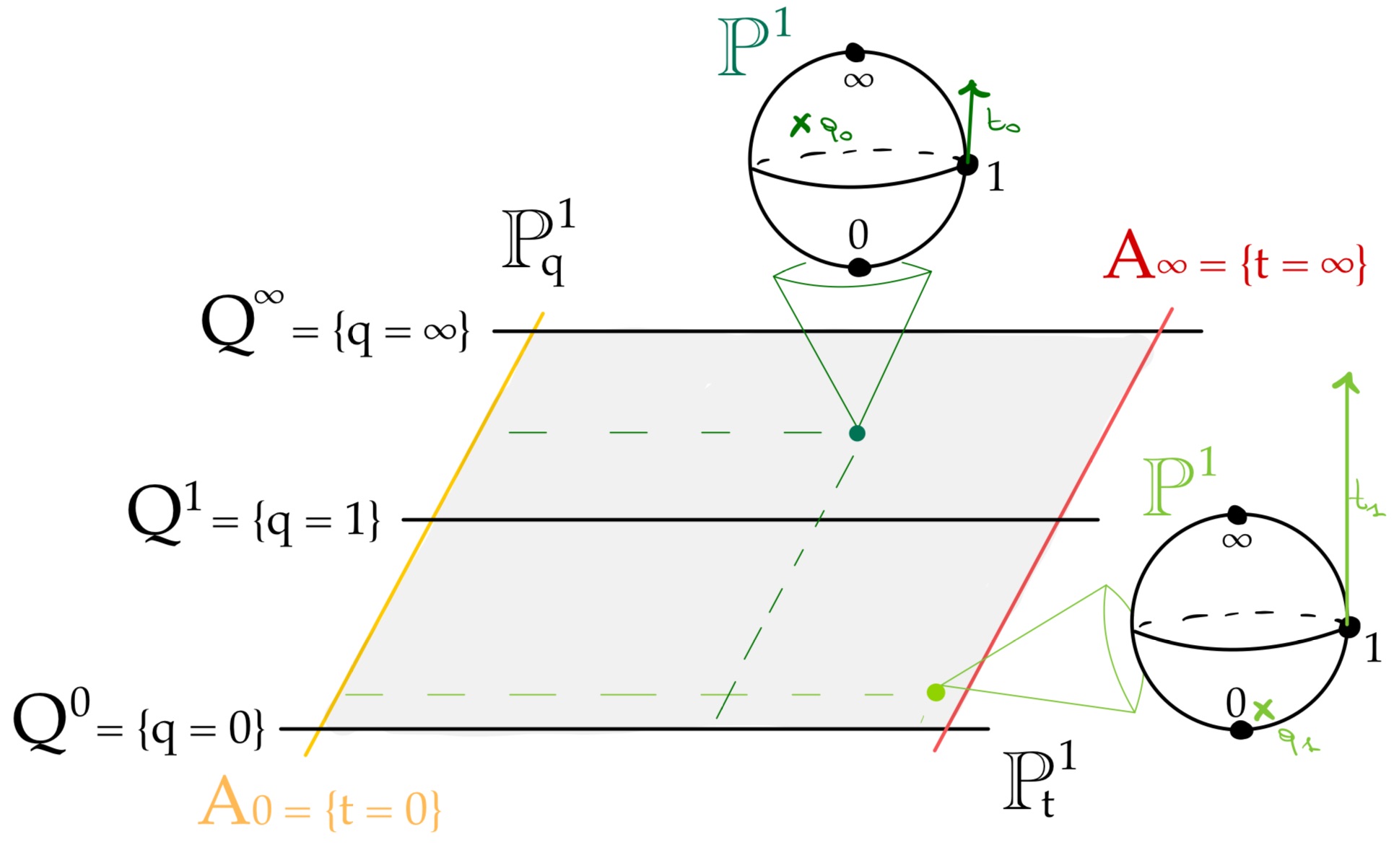}
    \captionof{figure}{}\label{pic1}
\end{center}
To compactify this space, we need to understand what happens for the critical values of $t$ and $q$. In \cite{Matt} we define the limit objects as textit{irregular stable nodal curves}, and,
in Theorem 4.5, we show that the compactification of $\M$ is given by the contraction of the $[-2]$-curve obtained by blowing-up $\P^1\times\P^1$ three times, as shown in the picture below:
\begin{center}
    \includegraphics[width = 10 cm]{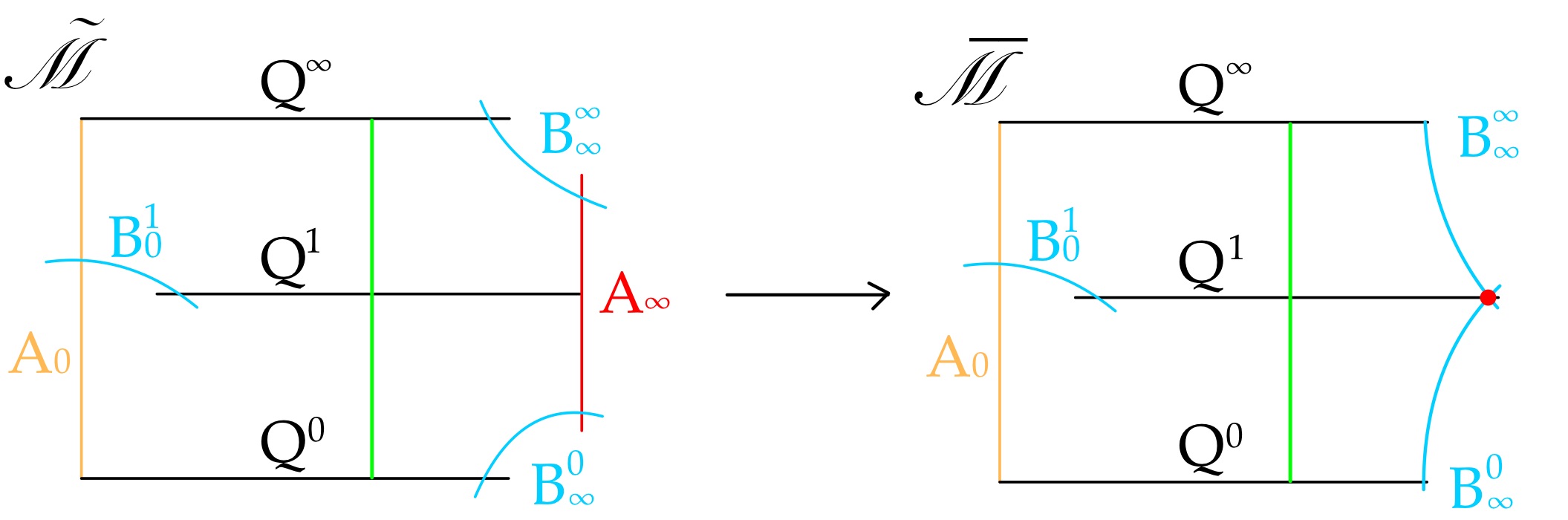}
    \captionof{figure}{}\label{modspcurves}
\end{center}
A precise description of the universal curve and of the irregular stable nodal curves arising can be found in \cite{Matt} Section 4.2. 
\begin{es}
    The boundary $A_0$ represents those curves with $t=0$, but it turns out that setting $t=0$ is not the only possible representation. We recall indeed that we are considering curves up to isomorphism and, in particular, we can apply a Moebius transformation $f_{A_0}$ that fixes 1 and such that $Df_{A_0}(1)\cdot t= 1$. We can then study the limit for $t\to0$ and look at what the resulting curve looks like. One such Moebius transformation is 
\[
	f_{A_0}(x)=\frac{(x+1)t}{(t-2)x+t+2}.
\]
It holds that 
\[\lim_{t\to0}f_{A_0}(0)=\lim_{t\to0}f_{A_0}(q)=\lim_{t\to0}f_{A_0}(\infty)=0.\]
Up to Moebius transformations, we have then two different representations of the limit $t\to 0$. We can indeed represent the limit rational irregular curve in such a way that it keep trace of both the possibility, giving rise to a stable nodal curve in a Deligne-Mumford flavour.
	\begin{center}
		\includegraphics[width=.6\linewidth]{image1-3.jpeg}
        \captionof{figure}{}\label{fig:A0}
	\end{center}	
\end{es}

\subsection{Moduli Space}\label{secmodsp}
As we have seen in Section \ref{secnormform}, a generic class of a PV connection is uniquely determined by the three parameters $(t,q,\hat p)$ that appear in its normal form. In Section \ref{secirrcurv} we have given a geometrical interpretation to the parameters $(t,q)$ as respectively a tangent vector of $\P^1$ in $x=1$, and a point of $\P^1$.

Until $t\neq0$ and $q\neq0,1,\infty$, the moduli space of PV connection is the trivial line bundle $\M\times \C$ with coordinates $(t,q,\hat p)$. We extended in \cite{Matt} the line bundle over $\widetilde\M$, getting 
\[\O_{\widetilde \M}(D)\;\;\;\;\text{ for }\;\;\;\;D= A_0+2Q^1+2B_0^0-B_\infty^0-B_\infty^\infty,\]
that turns out to be trivial over all the $Q^i$ and $A_\infty$. We can finally compactify it by adding the section at infinity.

The connections lying over the boundary components of $\widetilde \M$ are indeed connections over stable nodal irrational curves. In all the cases we get two smooth components and a (confluent) hypergeometric connection on one of them, and a (confluent) Heun connection on the other.

Studying the explicit limits of the connections we proved that to get the compactification we needed to blow-up some special trivialising section over the $Q^i$ and $A_\infty$. We do not consider the complementary of the exceptional divisor inside the $Q^i$ and $A_\infty$. Let us call $\overline \Conn$ the resulting 3-fold, that we proved being the compactification of the moduli space of PV connection we are looking for. This compactification comes with a fibration $\pi\colon\overline\Conn\xrightarrow{t} \P^1$ whose fibers have been described in Theorem 4.25 of \cite{Matt}. For any value of $t_0\in \C^*$, we denote by
\[Ok_{t_0}:=\pi^{-1}(t_0)\]
the Okamoto space \cite{okamoto}, that is the space of the initial conditions for the fifth Painlevé equation \cite{JIMBO1981306}. See, for instance, Section 2 of \cite{Matt} for more details about its geometry. 
\begin{center}
    \includegraphics[width=12cm]{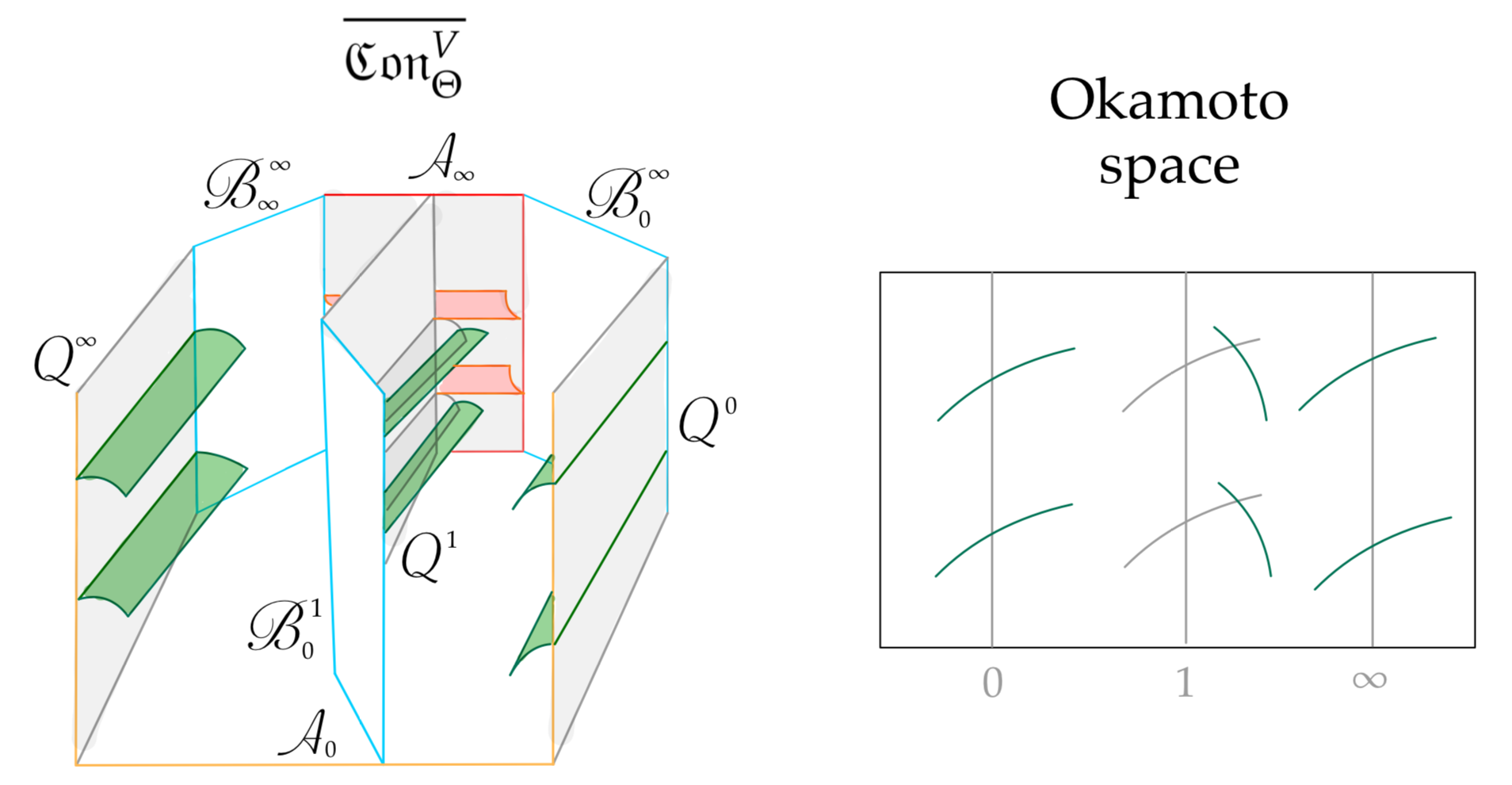}
     \captionof{figure}{}\label{figmodsp}
\end{center}
We add some notation for the exceptional divisors: we will call $\mathcal E^i_\pm$ the exceptional divisors arising from the blowing up of the sections in $\mathcal Q^i$. 

We conclude this section by recalling that a connection in $\M\times\C\subseteq\overline\Conn$ is the datum of $(t,q,\hat p)$ that is an irregular curve $(\P^1, D^V+[q])$ and an additional complex parameter $\hat p$. On the other hand, a class of PV connections lying in a boundary component of $\overline \Conn$ is the datum of an irregular stable nodal curve and of an extra complex parameter. A connection on such irregular stable nodal curves decomposes as a connection on each smooth component. In our case, we will always have a (confluent) hypergeometric connection on one component and a (confluent) Heun connection on the other. The node of the curve is a pole for both connections, and its monodromy will play a central role.

\subsection{Regular and Irregular Monodromy}
The monodromy of a meromorphic connection measures the evolution of a (local) horizontal section, that is a section $Y$ satisfying $\nabla Y=0$, under analytic continuation along a closed path around singularities (indeed, being $\P^1$ simply connected, no monodromy data is given by the topology). 

The PV connections present three kinds of singularities: an apparent singularity in $x=q$, two logarithmic poles at $x=0,\infty$ and a double pole at $x=1$. Understanding the global monodromy representation of such a connection is very complicated, but we will give a full description of the local monodromies.

Let us start by the logarithmic case.\\
We can consider an open trivialising set $U\subseteq \P^1$ containing only the logarithmic singularity $0$ (or $\infty$), and a smooth loop $\gamma\colon \
\mathbb{S}^1\to \P^1$ entirely contained in $U$ and encircling the singularity. Let us set $Y^\gamma$ the analytic continuation of the horizontal section $Y$ along $\gamma$. It holds that, for each $x\in U$
\[Y^\gamma(x)=Y(x)\cdot M^\gamma\]
for a constant matrix $M^\gamma\in \mathrm{GL}_2(\C)$ that is called the monodromy matrix.
It is a classical result (Lemma 1.1.4 \cite{RHrank2}) that $M^\gamma$ only depends on the homotopy class $[\gamma]\in \pi_1(U\setminus\{a\})$, for $a=0,\infty$. 
We deduce hence that nearby a logarithmic singularity we have a local monodromy representation 
\begin{align*}
    \rho_a\colon\pi_1(U\setminus\{a\})&\to \mathrm{GL}_2(\C)\\ [\gamma]&\mapsto M^\gamma. 
\end{align*}
If we consider a loop $\gamma$ turning one times around 0, the monodromy matrix $M^\gamma$ is conjugated to the exponential of the residual matrix $A^{(0)}_{1}$.\\
Notice that an apparent singularity is detected as a logarithmic pole, but its local monodromy is trivial. In fact, $\exp(A^{(q)}_1)=I$.

When the singularity is irregular, as the pole of order 2 in $x=1$, the description of the local monodromy is more complicated. Since a large literature can be found on irregular monodromy (for instance, \cite{RHrank2}, \cite{BOLIBRUCH2006235} or \cite{boalchirrcurv}), let us concentrate on the case of the irregular singularity of a PV connection in its normal form as in Section \ref{secnormform}. Let $U\cong \D$ be a trivialising open set for the PV connection $(E, D^V, \nabla)$ containing only the irregular singularity at $x=1$. Compared to the logarithmic case, parallel sections in $\D$ have only a formal description on the entire open punctured set $\D^*:=\D\setminus\{1\}$, and these formal series converges into holomorphic sections only on the two sectors
\[V^k:=\Big\{x\in\D^*\;\;\Big|\;\;\arg(x)\neq\arg (t)+(-1)^k\frac\pi2\Big\}\;\;\; \text{ for }\;\; k=0,1,\]
where $t$ is the non zero eigenvalue of the second order matrix at the irregular pole. It is, equivalently, the complex number defining the jet $j^1\gamma_1=(1,t)$ in the associated rational irregular curve.
\begin{oss}\label{rem:onedim}
	In order to understand what is happening, let us look at the one dimensional case. Consider the equation 
	\[dy=a(x)\frac{dx}{x^2}y,\;\; \text{with}\;\; a(x)=t+a_1x+a_2x^2+\dots\]
	A solution is given by the explicit formula
	\[y(x)=\exp\int\frac{a(x)}{x^2}dx=\exp\Bigg(-\frac{t+o(1)}{x}\Bigg).\]
	The condition $\mathrm{Re}(t/x)=0$ consists of 2 rays, that we will call \textit{anti-Stokes directions}, dividing the disk into two equal sectors, that are precisely the $V^k$ as before. An easy computation indeed shows that
		\[\mathrm{Re}\Bigg(\frac tx\Bigg)=\frac{\rho_t}{\rho_x}\mathrm{Re}\Bigg(\frac{e^{i\theta_t}}{e^{i\theta_x}}\Bigg)=\frac{\rho_t}{\rho_x}\mathrm{Re}\Big(e^{i(\theta_t-\theta_x)}\Big)=0 \iff \theta_t-\theta_x=\frac\pi2+k\pi, \;k\in \Z.\]
		Note that such rays corresponds to the sign change for Re$(t/x)$. Consider now the bisectors rays of sectors $V^0\cap V^1$ and let us call them \textit{Stokes directions}. They corresponds to the solutions of the equation $\theta_t-\theta_x=k\pi$, and hence Re$(t/x)=\pm\rho_t/\rho_x$. In particular on one Stokes direction the solution $y(x)$ tends to infinity, while, on the other, to zero.
			\begin{center}
				\includegraphics[width=0.7\linewidth]{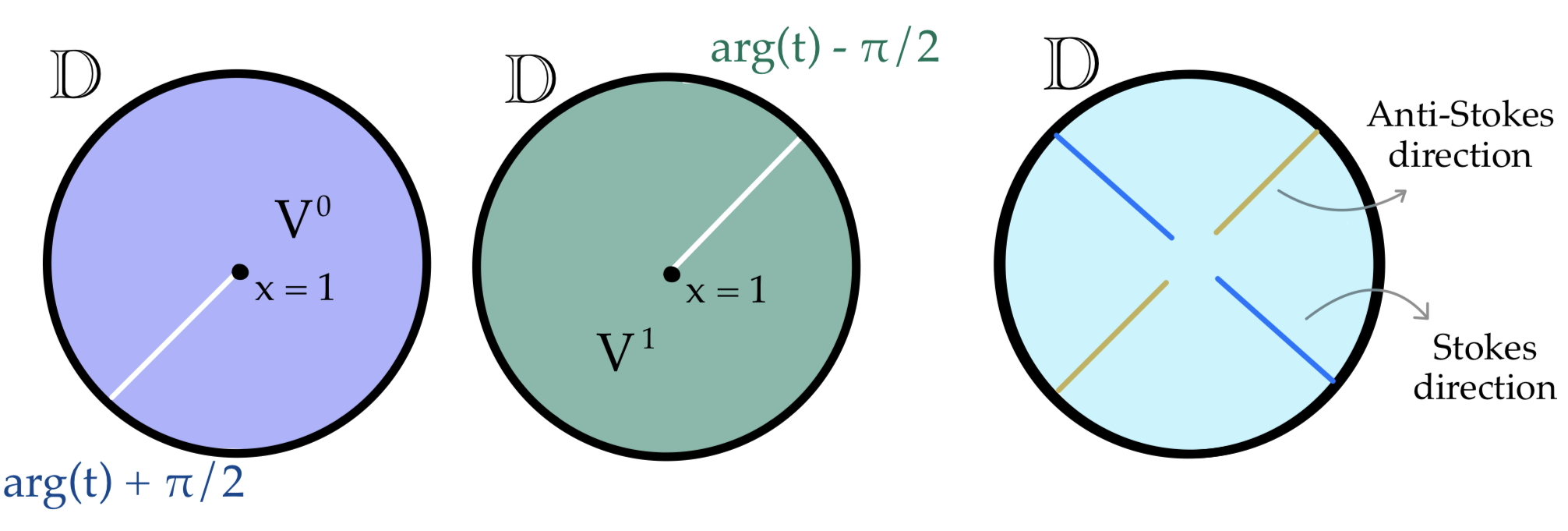}
			\end{center}
\end{oss}

The previous remark explains why solutions cannot converge on the whole punctured disk. 

Irregular monodromy is also more complicated: passing to a sector from another also generates monodromy. Details on irregular monodromy and Stokes phenomena are beyond this thesis end we refer to paragraph 21.1 of \cite{Ily} and section 1.3 of \cite{RHrank2} for a detailed description. 

Let us then jump to two fundamental results that describe local irregular monodromy. 
\begin{lemma}
	Let $(E,2[0], \nabla)$ an irregular connection on the disk. Let the connection matrix $\Omega$ be such that its principal part is diagonal. Consider $\{\kappa^+, \kappa^-\}$ the residual spectral data. The monodromy relative to a loop $\gamma$ encircling the singularity $x=0$ is
	\[M^\gamma:=S_0\cdot S_1\cdot D=\begin{pmatrix}1&c_0\\0&1\end{pmatrix}\begin{pmatrix}1&0\\c_1&1\end{pmatrix}\begin{pmatrix}e^{2i\pi\kappa^+}&0\\0&e^{2i\pi\kappa^-}\end{pmatrix}\]
	for some complex numbers $c_0,c_1$. Matrices $S_0$ and $S_1$ are called Stokes matrices.
\end{lemma}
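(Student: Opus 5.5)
The plan is to use the Hukuhara--Turrittin--Levelt formal classification at an unramified pole of order two, followed by the sectorial (Borel/multisummation) realisation of the formal solution on the two sectors $V^0,V^1$ of Remark \ref{rem:onedim}. I will read off the monodromy by comparing the sectorial solutions on the overlaps.

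First I construct the formal normal form. Since the principal part is diagonal with distinct eigenvalues of the leading matrix (the leading eigenvalues differ by $t\neq0$ in our setting), one can find a formal gauge transformation $\hat F(x)=I+F_1x+F_2x^2+\cdots$ that brings $\nabla$ to
\[
d+\begin{pmatrix}a^+&0\\0&a^-\end{pmatrix}\frac{dx}{x^2}+\begin{pmatrix}\kappa^+&0\\0&\kappa^-\end{pmatrix}\frac{dx}{x}.
\]
Here $\{\kappa^\pm\}$ is exactly the residual spectral datum of the definition. The coefficients $F_k$ are solved recursively, since the off-diagonal equations are invertible because $a^+\neq a^-$. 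This gives a formal fundamental solution
\[
\hat Y(x)=\hat F(x)\,x^{\Lambda}e^{-A/x},\qquad \Lambda=\mathrm{diag}(\kappa^+,\kappa^-),\quad A=\mathrm{diag}(a^+,a^-).
\]

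Second, I apply the Hukuhara--Turrittin sectorial existence theorem (equivalently, $1$-summability of $\hat F$). On each sector $V^k$, which has opening larger than $\pi$ and contains exactly one Stokes direction, there is a unique holomorphic $F_k$ asymptotic to $\hat F$. This yields actual solutions $Y_k=F_k\,x^\Lambda e^{-A/x}$ on $V^k$. On each of the two components of $V^0\cap V^1$ one has $Y_1=Y_0C$ for constant matrices $C$. Asymptotic uniqueness forces $C$ to be unipotent. On the component where $\mathrm{Re}((a^+-a^-)/x)$ has a given sign, the exponential $e^{-(a^+-a^-)/x}$ is dominant in one entry, so only the opposite off-diagonal entry survives. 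This gives $S_0$ upper triangular and $S_1$ lower triangular, which is where the choice of Stokes rays matters.

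Finally, I continue $Y_0$ once around the loop $\gamma$. Crossing the first overlap multiplies by $S_0$ and crossing the second by $S_1$. Returning to $V^0$, the factor $x^\Lambda$ contributes $e^{2i\pi\Lambda}=D$, while $e^{-A/x}$ and $F_0$ are single-valued. Composing gives $M^\gamma=S_0S_1D$ in the basis $Y_0$. The main obstacle is the sectorial summability step and the precise triangularity of the Stokes matrices. I will quote these from the cited literature (\cite{Ily}, \cite{RHrank2}) rather than reprove them, and I will check the triangularity only through the one-dimensional dominance argument of Remark \ref{rem:onedim}.
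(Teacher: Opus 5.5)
Your proposal is correct and follows essentially the same route as the paper, which simply invokes Proposition 1.3.8 of \cite{RHrank2}. Your outline is the standard argument behind that citation: the formal diagonal normal form using $a^+\neq a^-$, the unique sectorial sums on $V^0,V^1$, the unipotent triangular transition matrices on the two components of $V^0\cap V^1$ where one exponential dominates, and the factor $e^{2i\pi\Lambda}$ coming from the branch of $x^\Lambda$.

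The only thing to adjust is terminology. In the conventions of Remark \ref{rem:onedim}, each $V^k$ is the disk slit along one \emph{anti-Stokes} ray and contains both Stokes directions. Your phrase ``contains exactly one Stokes direction'' therefore uses the opposite naming, though this does not affect the argument.
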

\begin{proof}
	Proposition 1.3.8 of \cite{RHrank2}.
\end{proof}

We are now ready to build the local irregular monodromy representation. In particular we associate to each path in the local fundamental groupoid its monodromy matrix. Let us give in the figure below a presentation to the groupoid, as in \cite{RHrank2}. 
	\begin{center}
		\includegraphics[width=0.25\linewidth]{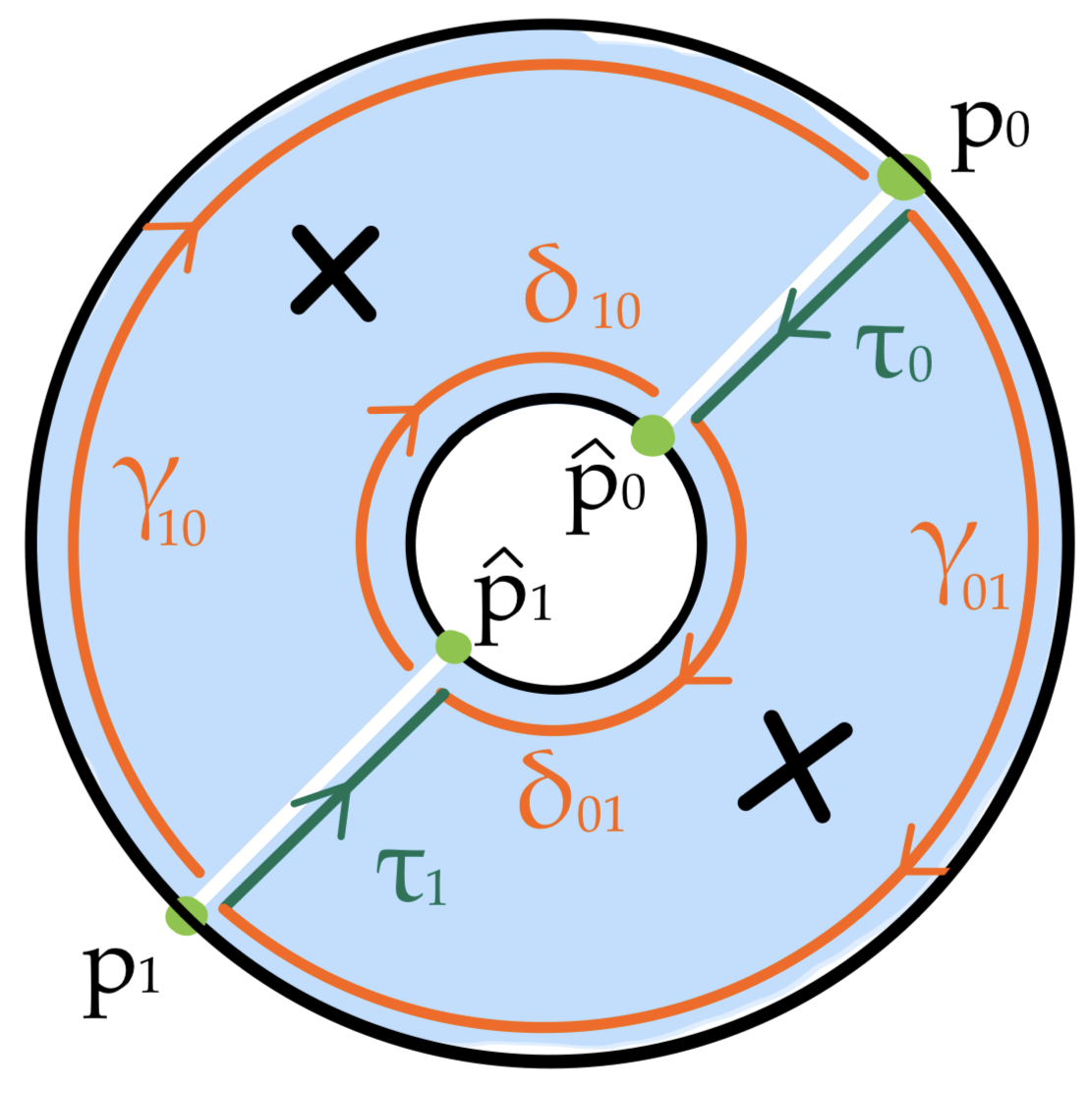}
	\end{center}

In particular, we can associate a monodromy matrix to each path. We denote by $\star$ the concatenation of loops, with the convention that $\gamma \star \delta$ means “first follow $\gamma$, then $\delta$”.
\[
\begin{matrix}
	\gamma=\gamma_{0,1}\star\tau_1\star\delta_{0,1}^{-1}\star\tau_0^{-1}&\;\;&\rightsquigarrow&\;\;&M^\gamma=\begin{pmatrix}1&c_0\\0&1\end{pmatrix}=:S_0\\\\
	\gamma=\gamma_{1,0}\star\tau_0\star\delta_{1,0}^{-1}\star\tau_1^{-1}&&\rightsquigarrow&&M^\gamma=\begin{pmatrix}1&0\\c_1&1\end{pmatrix}=:S_1\\\\
	\gamma=\delta_{1,0}\star\delta_{0,1}&&\rightsquigarrow&&M^\gamma=D\\\\
	\gamma=\gamma_{0,1}\star\gamma_{1,0}&&\rightsquigarrow&& M^\gamma=:S_0\cdot S_1\cdot D
\end{matrix}
\]
giving the local monodromy representation
\[\rho_{\D}\colon\pi_1(\hat \D\setminus\{s_0, s_1\}, \{p_0,p_1,\hat p_0, \hat p_1\})\to \mathrm{GL}_2(\C).\]

We would like to describe the global monodromy representation for a PV connection. To do so, we first need to define the global wild fundamental groupoid, that is constructed as follows in \cite{paulramis}. Consider $\P^1$ and perform a real blowing up at the points $0,1$ and $\infty$. Let us denote by $\hat \P^1$ this new curve, and by $D_0, D_1$ and $D_\infty$ the exceptional divisors. We recall that, since we are performing a real blow-up, they are all isomorphic to a circle. Remove then two points $s_1^+$ and $s_1^-$ close to $D^1$, lying on two opposite Stokes lines. Choose then four base points for the fundamental groupoid, $p_0\in D_0$, $p_\infty\in D_\infty$ and two points $p_1, p_2\in D_1$, lying on two opposite anti-Stokes lines. Finally, consider the groupoid whose morphisms are the paths between these four base points up to homotopy:
\[\pi_1^V:=\pi_1\left(\hat\P^1\setminus\{s_1^+, s_1^-\},\{p_0,p_\infty,p_1,p_2\}\right).\]
It is the \textit{wild fundamental groupoid} associated to any PV connection. In the figure below, we show a set of generators for $\pi^V_1$.\\
	\begin{center}
		\includegraphics[width=0.7\linewidth]{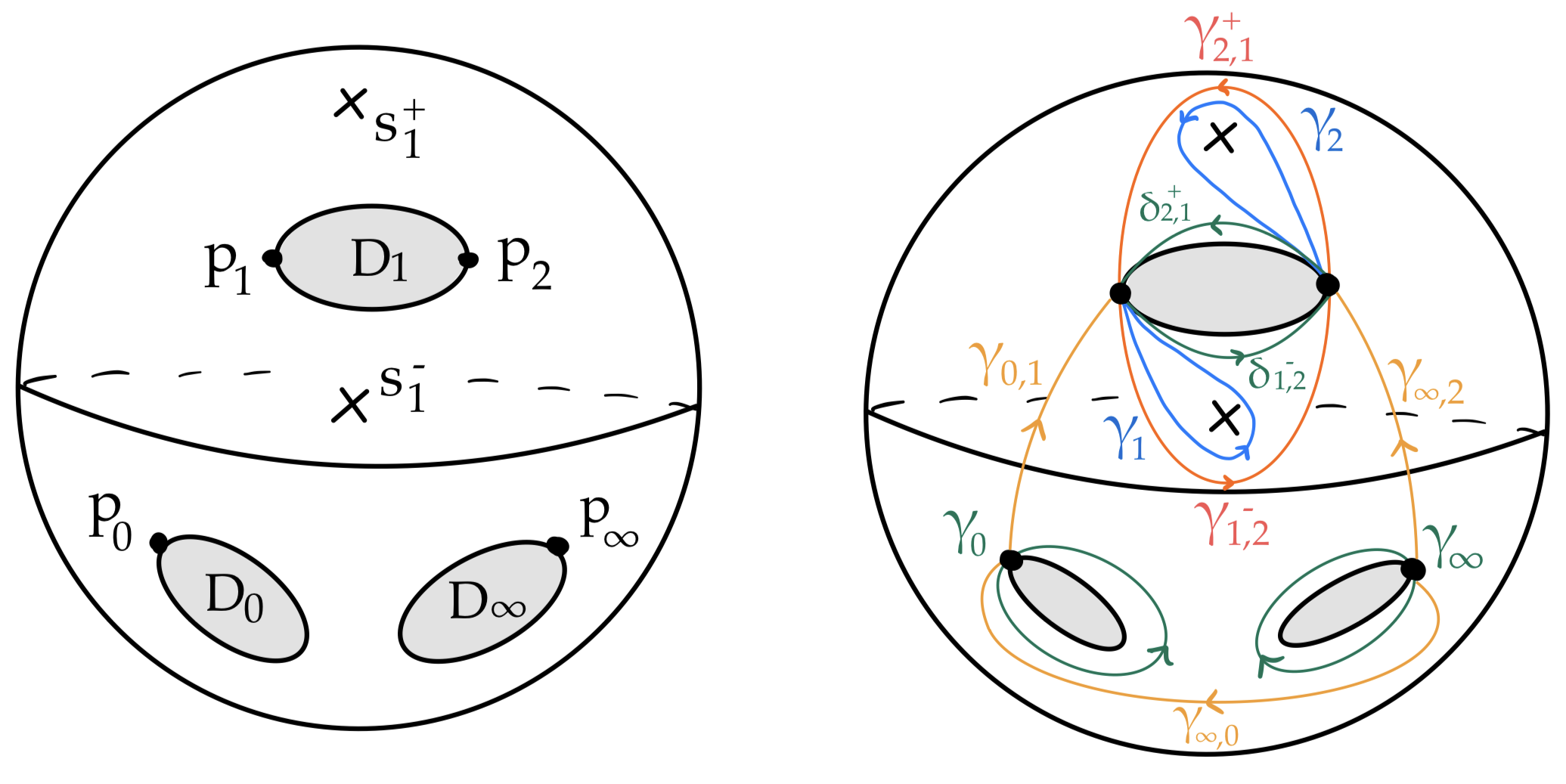}
	\end{center}
The relations between the generators of this presentation are generated by the following:
\begin{itemize}
	\item $(\gamma_{2,1}^+)^{-1}\star(\gamma_{\infty, 2})^{-1}\star\gamma_{\infty,0}\star\gamma_{0,1}=1$,
	\item $\gamma_{1,2}^-\star(\gamma_{\infty, 2})^{-1}\star\gamma_\infty\star\gamma_{\infty,0}\star\gamma_0\star\gamma_{0,1}=1$
	\item $\gamma_1\star\delta_{1,2}^-\star\gamma_2\star\delta_{2,1}^+=\gamma_{1,2}^-\star\gamma_{2,1}^+$
\end{itemize}
We have now enough data to study the monodromy representation of a PV connection:
\begin{align*}
	\rho_\nabla\colon\pi_1^V&\to \mathrm{GL}_2(\C)\\ [\gamma]&\mapsto M^\gamma.
\end{align*}
We recall that a basis change for $E$ produces an another monodromy representation conjugated to the initial one. In particular, up to basis changes, we can always achieve that
\[\rho_\nabla(\gamma_{\infty,2})=\rho_\nabla(\gamma_{\infty,0})=\rho_\nabla(\gamma_{0,1})=Id.\]
Note that this implies also that $\rho_\nabla(\gamma_{2,1}^+)=Id$.
\begin{defn}
	A representation of $\rho\colon\pi_1^V\to \mathrm{GL}_2(\C)$ is said \textit{normalized} if  $\rho(\gamma_{\infty,2})=\rho(\gamma_{\infty,0})=\rho(\gamma_{0,1})=\rho(\gamma_{2,1}^+)=Id$.
\end{defn}
We would like to keep the same structure that we found in the local fundamental groupoid.
\begin{defn}
	A normalized representation of $\rho\colon\pi_1^V\to \mathrm{GL}_2(\C)$ is said \textit{admissible} if 
	\begin{itemize}
		\item[$\bullet$] $\rho(\gamma_1):=S_1$ is upper triangular,
		\item[$\bullet$] $\rho(\delta_{1,2}^-\star\gamma_2\star(\delta_{1,2}^-)^{-1}):=S_2$ is lower triangular,
		\item[$\bullet$] $\rho(\delta_{1,2}^-\star\delta_{2,1}^+):=D$ is diagonal.
	\end{itemize}
\end{defn}
\begin{oss}
	We find back what we expected from the local fundamental groupoid. Indeed
	\begin{align*}
		\rho(\gamma_{1,2}^-\star\gamma_{2,1}^+)=&\;\rho\left(\gamma_1\star\delta_{1,2}^-\star\gamma_2\star\delta_{2,1}^+\right)=\\
		&\rho\left(\gamma_1\star\delta_{1,2}^-\star\gamma_2\star(\delta_{1,2}^-)^{-1}\star\delta_{1,2}^-\star\delta_{2,1}^+\right)=\\
		&\rho(\gamma_1)\cdot\rho(\delta_{1,2}^-\star\gamma_2\star(\delta_{1,2}^-)^{-1})\cdot\rho(\delta_{1,2}^-\star\delta_{2,1}^+)=\\
		&S_1\cdot S_2\cdot D.
	\end{align*}
\end{oss}
We denote by $\mathrm{Rep}^V_\Theta(\pi^V_1, \mathrm{GL}_2(\C))$ the set of normalized and admissible representations. Let us finally recall that we consider monodromy up to equivalence, that is up to global conjugation by a diagonal matrix, in order to preserve the admissibility.
\begin{defn}
	The \textit{wild character variety} associated to the fundamental groupoid of a rational irregular curve $(\P^1, D^V,J^V)$ is the categorical quotient
	\[\chi^V_\Theta:=\mathrm{Rep}^V_\Theta\left(\pi^V_1, \mathrm{GL}_2(\C)\right)//\mathcal D,\]
	where $\mathcal D$ is the subgroup of $\mathrm{GL}_2(\C)$ composed by diagonal matrices.
\end{defn}
In Section 3.2 of \cite{Saitovanderput}, the interested reader can find a geometric description of $\chi^V_\Theta$.

\subsection{Riemann-Hilbert Correspondence}
The Riemann-Hilbert correspondence for regular singular connections has been proved by Deligne \cite{Del} in 1970. The irregular case has been deeply studied by Philip Boalch \cite{Boalch2001GbundlesIA} in the framework of meromorphic connections. We can state it for PV connections in the following way:
\begin{thm}[Irregular RH for PV connections]\label{thm:irregRH}
    Let us consider the divisor $D^V:=[0]+2[1]+[\infty]$ and the set of jets $J^V_0=\{0, (1,t_0), \infty\}$ for a fixed complex number $t_0\in \C^*$. Then, the moduli space of PV connections on the rational irregular curve $\mathcal P_{t_0}:=(\P^1, D^V, J^V_0)$ with fixed generic spectral data $\Theta$, is analytically isomorphic to $\chi^V_\theta$ for any choice of $t_0\in \C^*$.
\end{thm}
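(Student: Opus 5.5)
This is essentially the Riemann--Hilbert correspondence for PV connections, so the plan is to build the monodromy map $\mathrm{RH}_{t_0}$ from the moduli space of PV connections on $\mathcal P_{t_0}$ to $\chi^V_\Theta$, and then show it is bijective and biholomorphic. I would follow Boalch's general theorem \cite{Boalch2001GbundlesIA} and specialise it to the divisor $D^V$ with generic $\Theta$.

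\textbf{Step 1: define the map.} Take a PV connection on $\mathcal P_{t_0}$ and choose a basis of horizontal sections near $p_0$. Analytic continuation along the generators of $\pi_1^V$ then gives a representation $\rho_\nabla$.
\begin{itemize}
\item At $x=1$, use the Hukuhara--Turrittin--Levelt formal normal form. Since the leading matrix has distinct eigenvalues $0$ and $t_0$, the connection is formally diagonalisable.
\item The asymptotic existence theorem gives actual solutions on the sectors $V^0$ and $V^1$ asymptotic to this formal solution.
\item Taking bases at $p_1$ and $p_2$ adapted to these sectorial solutions makes the representation admissible, by the Lemma on $S_0\cdot S_1\cdot D$.
\item Conjugating as described before the definition of normalized representations makes it normalized.
\item The only remaining freedom is the choice of diagonal basis of the formal solution, so we get a well-defined class in $\chi^V_\Theta$.
\item Gauge-equivalent connections give the same class, because meromorphic gauge transformations act by global conjugation.
\end{itemize}

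\textbf{Step 2: injectivity.} Suppose two connections $\nabla,\nabla'$ have equivalent monodromy data. Then the matrix $Y'Y^{-1}$ of their fundamental solutions is single valued on $\P^1\setminus\{0,1,\infty\}$.
\begin{itemize}
\item Equal Stokes data means the sectorial solutions glue. Hence $Y'Y^{-1}$ has moderate growth at $1$, and by the Riemann removable singularity argument it is meromorphic there.
\item At $0$ and $\infty$ the singularities are regular, and the same argument applies.
\item So $Y'Y^{-1}$ is a meromorphic gauge transformation, and the two connections are equivalent.
\end{itemize}

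\textbf{Step 3: surjectivity.} This is the main obstacle. Given an admissible normalized representation with the prescribed formal type:
\begin{itemize}
\item Build local models at $0$ and $\infty$ from the residues $\kappa_0$ and $\kappa_\infty$.
\item At $1$, build a local model by Sibuya's theorem, realising the Stokes matrices $S_1,S_2$ with diagonal part $D$ and exponential factor determined by $t_0$.
\item Glue these models to the local system on the complement to obtain a holomorphic bundle with a meromorphic connection on $\P^1$.
\item By genericity of $\Theta$ the monodromy is irreducible, so we may change the lattice (Bolibruch-type elementary transformations) until the bundle is $\O\oplus\O(2)$.
\item The Diarra--Loray normal form of Section \ref{secnormform} then applies, placing the connection in the PV moduli space with an apparent singularity at $q$.
\end{itemize}
Genericity must also exclude the non-generic cases where the normal form degenerates; this is why the statement assumes generic $\Theta$.

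\textbf{Step 4: analyticity.} The map is holomorphic because solutions, and their sectorial asymptotic lifts, depend holomorphically on the parameters $(q,\hat p)$. A holomorphic bijection between smooth two-dimensional complex varieties is biholomorphic. The independence from $t_0$ holds because the target $\chi^V_\Theta$ does not depend on $t_0$ at all.
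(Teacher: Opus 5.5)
\textbf{Overall.} The paper gives no proof of its own here: the statement is quoted from Boalch's irregular Riemann--Hilbert correspondence \cite{Boalch2001GbundlesIA}. Your Steps 1--3 are the standard argument behind that citation, with injectivity from the single-valued, moderate-growth matrix $Y'Y^{-1}$ and surjectivity from Birkhoff--Sibuya local models glued to the local system.

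\textbf{The gap.} The genuine gap is in how you give the moduli space its analytic structure. In Steps 3 and 4 you treat the normal-form parameters $(q,\hat p)$ as global coordinates: every class is supposed to land in normal form with a free apparent pole at $q$, and holomorphy is derived from dependence on $(q,\hat p)$. By the remark following the theorem, however, the moduli space is $Ok_{t_0}=\Conn\cap\{t=t_0\}$. The chart $q\notin\{0,1,\infty\}$ is only a Zariski-open part of it; the paper only claims that a \emph{generic} class is represented by $(t,q,\hat p)$.

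$Ok_{t_0}$ also contains the curves $\mathcal E^i_\pm\cap\{t=t_0\}$ coming from the blown-up special sections over $\mathcal Q^i$. These carry genuine connections, since the isomonodromic foliation crosses them transversally (Theorem \ref{thm:Qi}). On them the apparent singularity has collided with a pole. For example, take the chart $(q,\alpha^0_-)=(q,p)$ with $\hat p=-pq(q-1)^2$. There the normal form extends holomorphically across $q=0$, with $\hat K=-\kappa_0p$ at $q=0$. The term in $\frac{dx}{x-q}$ merges with the pole at $0$, whose residue becomes $\left(\begin{smallmatrix}0&1\\0&-\kappa_0-1\end{smallmatrix}\right)$. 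So the last bullet of your Step 3 fails for the monodromy data coming from these curves.

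There are two ways to read your map, and neither finishes the proof. Restricted to the $(q,\hat p)$ chart, it is holomorphic and injective but not onto $\chi^V_\Theta$. On the full set of classes it is bijective, but you have shown holomorphy only on the chart. In both cases the closing ``holomorphic bijection between smooth surfaces'' step cannot yet be applied.

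\textbf{How to close it.} You need a complex-manifold structure on the whole moduli space, with a holomorphic family of connections in every chart. You can get this by working in the charts $(q,\alpha^i_\pm)$ of the blow-ups, as in the $q=0$ computation above. Alternatively, use a construction of the moduli space carrying a universal family \cite{okamoto, Matt}. The parametric asymptotic existence theorem then gives holomorphy everywhere. You also need smoothness of $\chi^V_\Theta$ for generic $\Theta$, for instance from its explicit geometric description in \cite{Saitovanderput}.

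\textbf{Smaller points.} First, the Bolibruch-type lattice change in Step 3 is not needed for surjectivity. It is also not where $\O\oplus\O(2)$ comes from: that bundle appears because the normal form inserts the apparent pole at $q$ (compare the gauge $\Psi$ in Section \ref{secnormform}). Second, $\pi_1^V$ depends on $\arg t_0$ through the Stokes directions. So the target is independent of $t_0$ only up to a non-canonical isomorphism, which is exactly the monodromy action described at the start of Section \ref{sec:PVfol}.
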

\begin{oss}
    The moduli space of connections on $\mathcal P_{t_0}$ is just the hypersurface $Ok_{t_0}=\Conn\cap\{t=t_0\}$. In particular any connection in this space have a fixed $t=t_0$, but $q$ and $p$ that vary.
\end{oss}
\begin{oss}
    The fact that we consider spectral data $\exp(2i\pi\Theta)$ is the reason why we are interested to consider $\Theta$ just up to integer shifts.
\end{oss}
In other words, the irregular Riemann-Hilbert correspondence establish an isomorphism between the Okamoto spaces and the moduli space of representations of the wild fundamental group of $(\P^1, D^V, J)$. In formulae, for all $t_0\in \C^*$:
\[Ok_{t_0}\cong\chi^V_\Theta.\]

\section{Painlevé V Foliation}\label{sec:PVfol}
\subsection{Painlevé V Equation and its Hamiltonian System}\label{secpaineq}
One can wonder, given a generic class of PV connections, if we can deform it (that is, deform the parameters $(t,q,\hat p)$) without changing the monodromy. The irregular Riemann-Hilbert correspondence and the Painlevé property suggest that we will never have a positive answer until we keep $t$ constant. We already mentioned that the moduli space $\overline \Conn$ comes with a natural fibration $\pi\colon\overline\Conn\xrightarrow{t} \P^1$, decomposing it in an union of Okamoto spaces, that are the spaces of initial conditions for the Painlevé V equation, and the hypersurfaces corresponding to $t=0,\infty$. Theorem $\ref{thm:irregRH}$ states that each Okamoto space is analytically isomorphic to $\chi^V_\Theta$. We would then like to patch these isomorphisms together to construct a "global" Riemann-Hilbert map $RH\colon \overline\Conn\to\chi^V_\Theta$, but it turns out not to be possible for several reasons: first of all, we cannot define it on the boundary components $\pi^{-1}(0)$ and $\pi^{-1}(\infty)$. Moreover, even on $\Conn:=\pi^{-1}(\C^*)$, the Riemann-Hilbert map is not globally well defined. A monodromy action, coming from the topology of $\C^*$, arise, and therefore the Riemann-Hilbert map is well defined only as $RH\colon \pi^{-1}(U)\to \chi^V_\Theta$ for any simply connected open set $U\subseteq\C^*$.
	\begin{center}
		\includegraphics[width=0.45\linewidth]{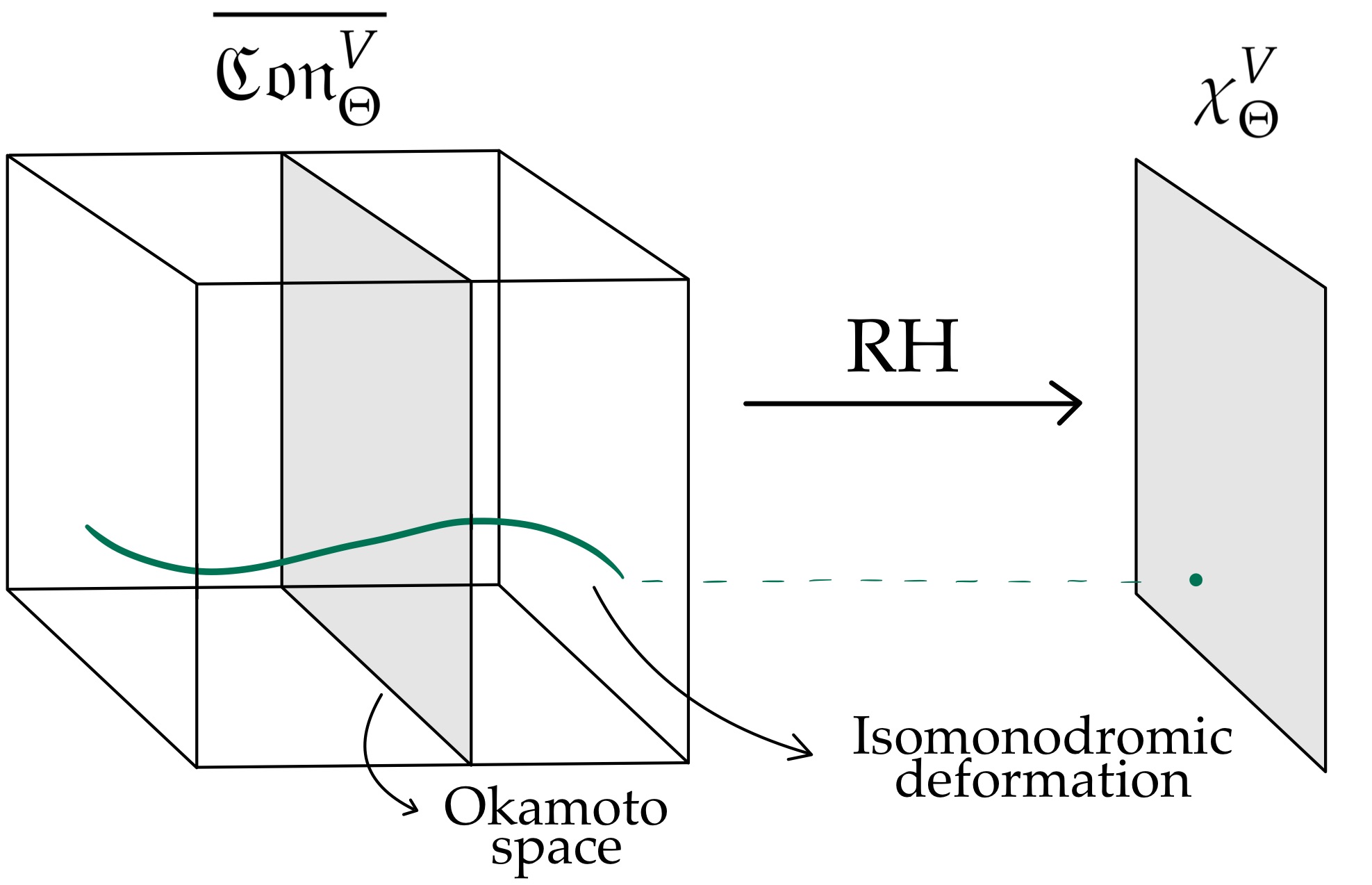}
	\end{center}

In 1981 Jimbo, Miwa and Ueno \cite{JIMBO1981306} shown that the fibers of the $RH$ map are parametrised, in the dense open set $\M\times\C$, as solutions of a very famous differential equation: the Painlevé V equation
\begin{equation}\label{eqV}
	\ddot q(t)=\Bigg(\frac{1}{2q(t)}+\frac{1}{q(t)-1}\Bigg)\dot q(t)^2-\frac1t\dot q(t)+\frac{(q(t)-1)^2}{t^2}\Bigg(\alpha q(t)+\frac{\beta}{q(t)}\Bigg)+\gamma \frac {q(t)}{t}+\delta\frac{q(t)(q(t)+1)}{q(t)-1}
\end{equation}
where
\begin{itemize}
	\item[$\bullet$] $(t,q)$ are the coordinates of $\M\subseteq\overline \Conn$,
	\item[$\bullet$] the fixed parameters $\{\alpha,\beta,\gamma,\delta\}$ can be expressed in terms of $\Theta=\{\kappa_0, \kappa_1,\kappa_\infty\}$.
\end{itemize}
Associated to the fifth Painlevé equation, there is an Hamiltonian function
\[H^V:=\frac{q \left(q -1\right)^{2} p^{2}-\left(\kappa_0 \left(q -1\right)^{2}+(\kappa_1-1)  q \left(q -1\right)-t q \right) p +\rho^V \left(q -1\right)}{t}\]
as described in \cite{Ohyama_2006}, Section 4.1. \\
If we fix a basis $\{\partial_t, \partial_q, \partial_p\}$ of the tangent bundle $T(\M\times\C)$, we can deduce the hamiltonian vector field 
\[X^{V}:=\partial_t+\displaystyle\frac{\partial H^V}{\partial p}\displaystyle\partial_q-\displaystyle\frac{\partial H^V}{\partial q}\displaystyle\partial_p\]
whose integral curves correspond to the solutions of the fifth Painlevé equation. We recall that a complex curve $\gamma\colon\C\to\M\times \C$, given by the expression $\gamma(t)=(t, q(t), p(t))$, is integral for $X^V$ if it satisfies 
	\[\dot\gamma(t)=X^V(\gamma(t)) \;\;\;\text{ that is }\;\;\;\begin{pmatrix}1\\ \dot q(t)\\ \dot p(t)\end{pmatrix}=\begin{pmatrix}1\\ \partial_pH^V(\gamma(t))\\-\partial_qH^V(\gamma(t))\end{pmatrix}.\]
\begin{oss}
	We can find back the fifth Painlevé equation via the following procedure: since $\partial_pH^V$ is linear in $p$, via the identity $\dot q=\partial_pH^V$ we can deduce a formula for $p$. Substituting this value in the identity $\dot p=-\partial_qH^V$ we get the fifth Painlevé equation as desired.
\end{oss}
\begin{oss}
	The relation of the parameter $\hat p$ appearing in \cite{Diarra, Matt}, and the parameter $p$ appearing in \cite{Ohyama_2006} for the hamiltonian function is the following:
	\[\hat p = -pq(q-1)^2.\]
	Moreover, if we call $\phi_p\colon \hat p\mapsto p$ this coordinate change, we get a strong relation between the hamiltonian $H^V$ and the parameter $\hat K$ appearing in the normal form:
		\[\hat K=t\cdot (H^V\circ\phi_p)+\rho^V+\frac{\hat p}{q-1}.\]
\end{oss}
We stress that the fifth Painlevé equation is defined by this formula only in the Zariski open set $\M\times \C\subseteq\overline \Conn$ corresponding to the moduli space of such PV connections with $t\neq0, \infty$ and $q\neq0,1,\infty$. We remark that in $\M\times \C$ the foliation induced by the hamiltonian vector field is regular. The goal of this work is then to understand the behaviour of the foliation in the boundary components of $\overline\Conn$.

\subsection{Vector Field and First Integrals}\label{secvectfield}
We will refer to boundary components as in Figure \ref{figmodsp}. Our goal is to extend the hamiltonian vector field in coordinates adapted to these boundary components and to identify first integrals that help describe the behavior of the foliation. The main idea can be resumed as follows: consider a leaf of the isomonodromic foliation and follow it as it approaches a boundary component. In this limit, the irregular curve degenerates to a nodal curve with two smooth components, and the corresponding PV connection splits accordingly into two connections, one on each component. Both connections acquire a pole in the node of the curve. The local monodromy at the node should, in some sense, encode the monodromy of the original connection and therefore remain invariant along the isomonodromic leaf. In other words, we expect that the residual spectral datum in the node (that we computed in \cite{Matt}) provides a first integral for the hamiltonian vector field. We recall that we will use the following coordinates:
\begin{center}
    \includegraphics[width=12cm]{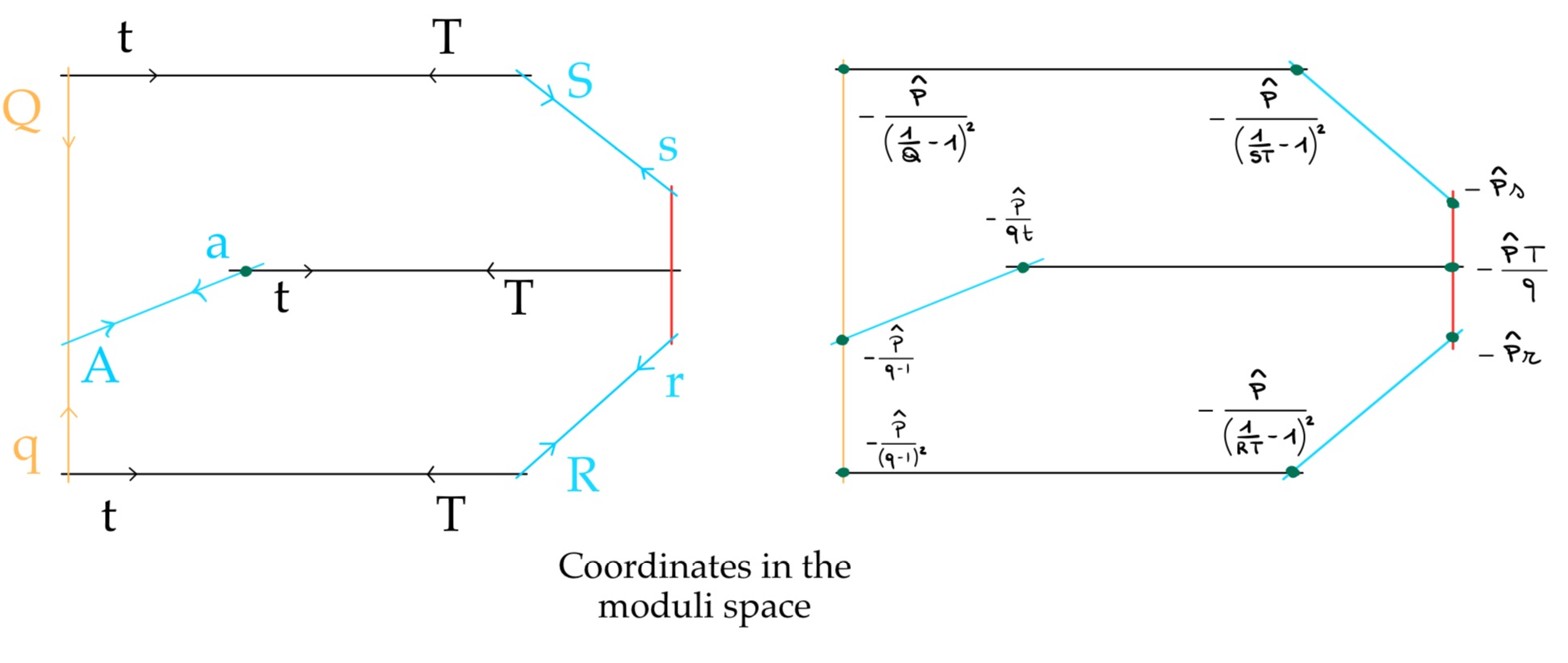}
\end{center}
where,
\[T=\frac1t,\;\;\;\;\; Q=\frac1q,\;\;\;\;\;A=\frac{t}{q-1}=\frac1a,\;\;\;\;\; S=\frac{T}{Q}=\frac1s,\;\;\;\;\; R=\frac qT=\frac1r\]
For simplicity we call:
\begin{align*}
    &P^0=-\frac{\hat p}{(q-1)^2}=-\frac{\hat p}{(RT-1)^2},\;\;\;\;\; P^1=-\frac{\hat p}{qt}=-\frac{\hat pT}{q}=-\hat p s=-\hat pr, \\&P^\infty=-\frac{\hat p}{(\frac{1}{Q}-1)^2}=-\frac{\hat p}{(\frac{1}{ST}-1)^2},\;\;\;\;\; P^1_0=-\frac{\hat p}{q-1}.
\end{align*}
We proceed case by case. 
We first study $\mathcal Q^0, \mathcal Q^1$ and $\mathcal Q^\infty$. Recall that the line bundle $\mathcal O_{\widetilde{\mathcal M}}(D)$ is trivial when restricted to each of these components. From Okamoto \cite{okamoto} and from computations in \cite{Matt}, we deduce that no connections lie in the $\mathcal Q^i$s but rather on the special sections we blow up. Nevertheless, we wish to understand the behaviour of the isomonodromic foliation along these components. We will resume in Theorem \ref{thm:Qi} the main results.
\medskip
\textbf{Vector Field on $\mathcal{Q}^0$.} First of all we need to extend the hamiltonian vector field to $\mathcal{Q}^0$. Let us define $f_{\mathcal Q_0}$ the coordinate change $(t,q,p)\mapsto(t,q,P^0)$. We define
\[X^V_{\mathcal Q^0}(t, P^0):=\lim_{q\to 0}(f_{\mathcal Q_0*}X^V)(t, q, P^0)=P^0(P^0-\kappa_0)\partial_{P^0},\]
the limit vector field restricted to the hypersurface $\mathcal Q^0$. It is a vertical vector field that is singular along the two special sections $P^0=0$ and $P^0=\kappa_0$, as we expected.
We recall that on such exceptional divisor, that we call $\mathcal E^0_\pm$, we set respectively the coordinates $\alpha^0_\pm$ given by the expression 
\[\alpha^0_-=\frac{q}{P^0}=-\frac{q(q-1)^2}{\hat p}=p\;\;\;\;\;\;\text{ and }\;\;\;\;\alpha_+^0=\frac{q}{P^0-\kappa_0}.\]
Changing now coordinates of $X^V$ by $(t,q,p)\to(t,q,\alpha^0_\pm)$ we get the following vector fields 
\begin{align*}
	X^V_{\mathcal E^0_-}(t,q,\alpha^0_-)&=t\partial_t\\&
	+[2\alpha^0_- q^3-(4\alpha^0_- +\kappa_0+\kappa_1-1)q^2+(t+2\alpha^0_-+2\kappa_0+\kappa_1-1)q-\kappa_0]\partial_q\\&
	+[(-3q^2+4q-1)(\alpha^0_-)^2+(-2(\kappa_0-\kappa_1+1)q-t-2\kappa_0-\kappa_1+1)\alpha^0_-+\\&+(\kappa_1-1)\kappa_0-\rho]\partial_{\alpha^0_-}
\end{align*}
\begin{align*}
	X^V_{\mathcal E^0_+}(t,q,\alpha^0_+)&=t\partial_t\\&
	+[2\alpha^0_+ q^3+(-4\alpha^0_+ +\kappa_0-\kappa_1+1)q^2+(t+2\alpha^0_+-2\kappa_0+\kappa_1-1)q+\kappa_0]\partial_q\\&
	+[(-3q^2+4q-1)(\alpha^0_+)^2+(-2(\kappa_0-\kappa_1+1)q-t+2\kappa_0-\kappa_1+1)\alpha^0_++\\&+(\kappa_1-1)\kappa_0-\rho]\partial_{\alpha^0_+}
\end{align*}

As computed in \cite{Matt}, the residual spectral datum in the node is, respectively $(\kappa_0\pm1)^2$, that, being constant, gives no information about the isomonodromic foliation. What it is important to notice is that the exceptional divisor is not invariant, since $X^V_{\mathcal E^0_\pm}|_{\mathcal E^0_\pm}$ has a non zero coefficient for $\partial_q$, and also that the coefficient of $\partial_t$ is a unity for $t\in \C^*$, that is part of the Painlevé property.

\medskip
\textbf{Vector Field on $\mathcal{Q}^1$.} Let us define $f_{\mathcal Q^1}$ the coordinate change $(t,q,p)\mapsto(t,a,P^1)$. The hamiltonian vector field restricted to $\mathcal Q^1$ is then
\[X^V_{\mathcal Q^1}(t, P^1)=\lim_{a\to 0}(f_{\mathcal Q^1*}X^V)(t, q, P^1)=2P^1(P^1+1)\partial_{P^1},\]
that, as expected, is vertical and presents singularities along the special sections $P^1=0,-1$.
\\As for $\mathcal Q^0$, we consider the vector field in the coordinates of the exceptional divisors. We recall that, as we can see in Figure \ref{figmodsp}, over $\mathcal Q^1$ we have to blow-up twice the special sections. We call $\mathcal E^1_\pm$ the exceptional divisors that are part of the moduli space, that are arising from the second blow up. We call $\mathcal F^1_\pm$ the intermediate ones. We will call by $\beta^1_\pm$ the coordinates on $\mathcal F^1_\pm$ and by $\alpha^1_\pm$ the coordinates on $\mathcal E^1_\pm$. They are defined as follows:
\[P^1=\beta^1_-(q-1) \;\;\;\text{ and }\;\;\;P^1=\frac{\alpha^1_-}{t^2}(q-1)^2,\]
that means that we blow-up the line $\beta^1_-=0$ to obtain $\mathcal E^1_-$. And
\[P^1=\beta^1_+(q-1)-1 \;\;\;\text{ and }\;\;\;P^1=\frac{\alpha^1_+}{t^2}(q-1)^2+\frac{\kappa_1}{t}-1,\]
that means that we blow up the line $\beta^1_-=\kappa_1/t$ to obtain $\mathcal E^1_+$.\\
For the first exceptional divisor, we change the coordinates of $X^V$ by $(t,q,p)\to(t,q,\beta^1_\pm)$ getting the following vector fields
\[(X^V_{\mathcal F^1_-})_{|\mathcal F^1_-}(t,\beta^1_-)=\beta^1_-t^2\partial_{\beta^1_-}\]
\[(X^V_{\mathcal F^1_+})_{|\mathcal F^1_+}(t,\beta^1_+)=t(-\beta^1_+t+\kappa_1)\partial_{\beta^1_+}.\]
We remark that they are stationary and vanish on the lines $\beta^1_-=0$ and $\beta^1_+=\kappa_1/t$, that are exactly the lines we want to blow-up. On the second exceptional divisor we get then to the vector fields, restricted on them, have the following expression 
\begin{align*}
	X^V_{\mathcal E^1_-}(t,q,\alpha^1_-)&=t^2\partial_t\\&
	+[2\alpha^1_-q^3+((-\kappa_0-\kappa_1+1)t-4\alpha^1_-)q^2-(t^2-(2\kappa_0+\kappa_1-1)t-2\alpha^1_-)q-\kappa_0t]\partial_q\\&
	+[(-3q^2+4q-1)(\alpha^1_-)^2+(t+2q(\kappa_0+\kappa_1-1)-2\kappa_0-\kappa_1+2)t\alpha^1_--\\&-\rho t^2]\partial_{\alpha^1_-} 
\end{align*}
\begin{align*}
	X^V_{\mathcal E^1_+}(t,q,\alpha^1_+)&=t^2\partial_t\\&+[2\alpha^1_+q^3+((\kappa_1-\kappa_0+1)t-4\alpha^1_+)q^2-(t^2-(2\kappa_0-\kappa_1-1)t-2\alpha^1_+)q-\kappa_0t]\partial_q\\&+[(-3q^2+4q-1)(\alpha^1_+)^2+(t+2q(\kappa_0-\kappa_1-1)-2\kappa_0+\kappa_1+2)t\alpha^1_++\\&+((\kappa_0-1)\kappa_1-\rho)t^2]\partial_{\alpha^1_+} 
\end{align*}
As computed in \cite{Matt}, the residual spectral data in the node are, respectively, $(\kappa_1\pm1)^2$, that, being constant, gives no information about the isomonodromic foliation. What it is important to notice is that the exceptional divisor is not invariant, since $X^V_{\mathcal E^1_\pm}|_{ \mathcal E^1_\pm}$ has a non zero coefficient for $\partial_q$, and also that the coefficient of $\partial_t$ is a unity for $t\in \C^*$, that is part of the Painlevé property. 

\medskip
\textbf{Vector Field on $\mathcal{Q}_\infty$.} Thanks to the symmetries of the moduli space, and of the PV equation, we know that we get the same behaviour as on $\mathcal Q^0$.

\medskip

Before passing to the study of the isomonodromic foliation on the $\mathcal B^i_j$, let us resume what we found in the following result.

\begin{thm}\label{thm:Qi}
	The $\mathcal Q^i$s are invariant for the isomonodromic foliation, moreover the hamiltonian vector field is therein vertical, that is, parallel to $\partial_{P^i}$. For $q=1$, also the intermediate exceptional divisor $\mathcal F^1_\pm$ is invariant, and the foliation is vertical. On the other hand, the exceptional divisors $\mathcal E^i_\pm$ are, as expected \cite{Heu2019FlatRT}, transverse to the foliation.
\end{thm}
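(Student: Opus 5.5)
The plan is to work chart by chart, using the explicit pushed-forward vector fields computed above, and to read invariance or transversality directly off their coefficients.

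\textbf{Invariance of the $\mathcal Q^i$.} In the chart $(t,q,P^0)$ I write $f_{\mathcal Q_0*}X^V = c_t\,\partial_t + c_q\,\partial_q + c_P\,\partial_{P^0}$. Note that $X^V$ carries an overall factor $1/t$ from $H^V$, so I first rescale by $t$; this gives the same foliation away from $t=0$.

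The hypersurface $\{q=0\}$ is invariant if and only if $c_q$ vanishes on it, after clearing common denominators. I substitute $p=-\hat p/(q(q-1)^2)$ and $\hat p = -P^0(q-1)^2$, so that $p=P^0/q$. Then
\[
\partial_p H^V = \frac{2q(q-1)^2p-\bigl(\kappa_0(q-1)^2+\cdots\bigr)}{t} = \frac{2(q-1)^2P^0-\kappa_0(q-1)^2+O(q)}{t}.
\]
At first sight this is nonzero at $q=0$, which would make $\mathcal Q^0$ non-invariant. The point is that the meromorphic vector field has a pole along $q=0$: the $\partial_{P^0}$ component behaves like $1/q$. Multiplying the whole field by $q$ (a saturation step) leaves a holomorphic field. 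Its $\partial_q$ coefficient is $q\,\partial_pH^V$, which vanishes at $q=0$, and its $\partial_t$ coefficient $q$ also vanishes. What survives is $P^0(P^0-\kappa_0)\partial_{P^0}$, as stated.

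So the first concrete step is to compute the $\partial_{P^0}$ coefficient, check that it has a simple pole along $q=0$, and conclude that the saturated field is tangent to $\mathcal Q^0$ and vertical there. Both claims are read off from the displayed limit $X^V_{\mathcal Q^0}$.

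The same argument applies to $\mathcal Q^1$ in the coordinates $(t,a,P^1)$, giving the limit $2P^1(P^1+1)\partial_{P^1}$. For $\mathcal Q^\infty$ I would invoke the symmetry $q\mapsto 1/q$ together with $\kappa_0\leftrightarrow\kappa_\infty$ of the PV Hamiltonian.

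\textbf{The intermediate divisors $\mathcal F^1_\pm$.} I blow up along the singular sections $P^1=0,-1$ by setting $P^1=\beta^1_-(q-1)$, and similarly for $\beta^1_+$. After saturating by a power of $(q-1)$, I check two things: that the $\partial_q$ coefficient vanishes on $\{q=1\}$, and that the restricted field is $\beta^1_-t^2\partial_{\beta^1_-}$, respectively $t(\kappa_1-\beta^1_+t)\partial_{\beta^1_+}$. Both are vertical, which gives invariance and verticality. These fields also vanish exactly on the lines $\beta^1_-=0$ and $\beta^1_+=\kappa_1/t$, which justifies the second blow-up.

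\textbf{Transversality of the $\mathcal E^i_\pm$.} Here I use the displayed fields $X^V_{\mathcal E^0_\pm}$ and $X^V_{\mathcal E^1_\pm}$, which are holomorphic and not divisible by $q$ (respectively $q-1$).

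On $\mathcal E^0_\pm=\{q=0\}$, the $\partial_q$ coefficient restricts to $\mp\kappa_0$. This is nonzero for generic $\kappa_0$, so the field is transverse to $\mathcal E^0_\pm$.

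On $\mathcal E^1_\pm=\{q=1\}$, the $\partial_q$ coefficient at $q=1$ equals
\[
2\alpha-4\alpha+2\alpha+(\ldots)t-t^2+\ldots ,
\]
in which the $\alpha$-terms cancel. For $\mathcal E^1_-$ this reduces to $-t^2$, which is nonzero for $t\neq0$. I would check $\mathcal E^1_+$ similarly and expect $-t^2$ there as well.

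The main obstacle is keeping track of the saturation orders at each blow-up, so that invariance is decided for the correctly normalized holomorphic field and is not an artifact of a common factor. I would handle this by explicitly factoring out the maximal power of the local equation of the divisor in each chart.
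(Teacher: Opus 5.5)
Your proposal is correct and follows the paper's argument: push $X^V$ into the adapted charts $(t,q,P^0)$, $(t,a,P^1)$, $(t,q,\beta^1_\pm)$ and $(t,q,\alpha^i_\pm)$, read tangency and verticality (resp.\ transversality) off the coefficient normal to the divisor, and handle $\mathcal Q^\infty$ by symmetry; your explicit saturation of the simple pole of the vertical component just makes precise what the paper's ``$\lim$'' means. The only slips are cosmetic and do not affect the conclusions: after your rescaling by $t$ and multiplication by $q$, the $\partial_t$ and $\partial_q$ coefficients in the $\mathcal Q^0$ chart are $qt$ and $qt\,\partial_pH^V$ (not $q$ and $q\,\partial_pH^V$), and the displayed $\partial_q$ coefficient of $X^V_{\mathcal E^0_\pm}$ at $q=0$ is $\pm\kappa_0$ rather than $\mp\kappa_0$.
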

\begin{center}
		\includegraphics[width=0.3\linewidth]{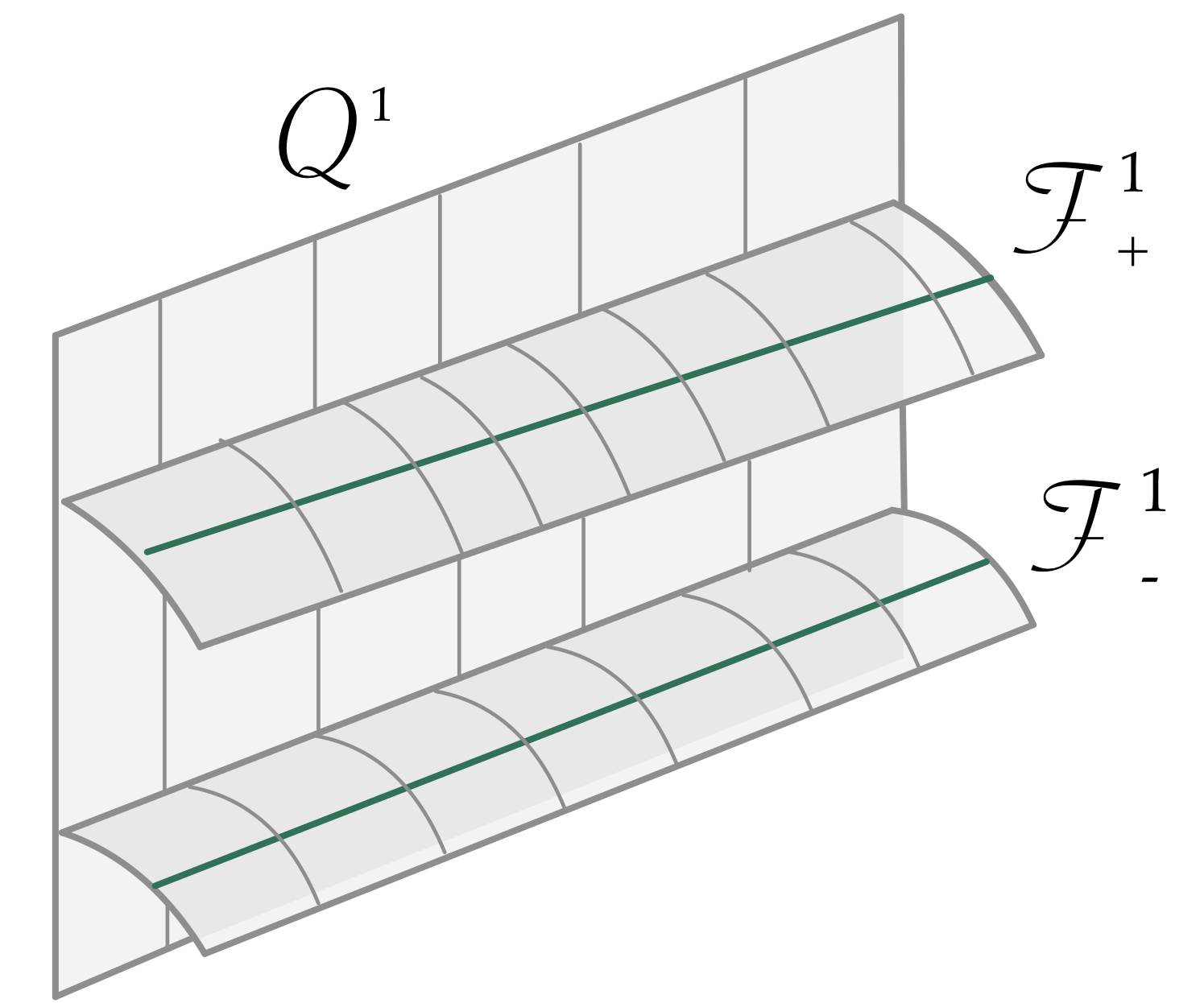}
        \captionof{figure}{}\label{fig:folQ1}
\end{center}

The hypersurfaces $\mathcal B^i_j$ turn out also to be invariant for the isomonodromic foliation. Since the line bundle is not trivial in restriction to them, we are not able to find a global vertical coordinate, as we did for the $\mathcal Q^i$s. We remark that for any $i$, the intersection $\mathcal B^i_j\cap \mathcal Q^i$ is a projective line. In particular we are interested in studying the behaviour of the foliation near the intersection of $\mathcal B^i_j$ and the special sections of $\mathcal Q^i$.

\medskip

\textbf{Vector Field and First integral on $\mathcal{B}_0^1$.} Let us define $f_{\mathcal B^1_0}$ the coordinate change $(t,q,p)\mapsto(A,\tilde q,P^1_0)$, where $\tilde q =q-1$, centered in $A_0\cap B^1_0$, and $g_{\mathcal B^1_0}$ the coordinate change $(t,q,p)\mapsto(t,a,P^1)$, centered in $Q^1\cap B^1_0$. The hamiltonian vector field restricted to $\mathcal B_0^1$ is then
\[X^V_{\mathcal B_0^1}(A, P^1_0)=\lim_{\tilde q\to 0}(f_{\mathcal B_0^1*}X^V)(A, \tilde q, P^1_0)=-AP^1_0\partial_{P^1_0}+(A+2P^1_0-\kappa_1)\partial_{A},\]
or
\[X^V_{\mathcal B_0^1}(a, P^1)=\lim_{t\to 0}(g_{\mathcal B_0^1*}X^V)(t, a, P^1)=a(-\kappa_1a+2P^1+1)\partial_{a}-P^1(\kappa_1a+2P^1+2)\partial_{P^1}.\]
We remark that the vector field becomes vertical at the intersection with $\mathcal Q^1=\{a=0\}$.
\\The residual spectral data in the node are 
\[(\kappa_1+1)^2+4P^1_0(A+P^1_0-\kappa_1)\;\;\;\;\;\text{ and }\;\;\;\;\;(\kappa_1+1)^2-\frac{4P^1\kappa_1}{a}+\frac{4P^1(P^1+1)}{a^2}.\]
\begin{prop}
	The residual spectral data are first integrals for the respective vector fields.
\end{prop}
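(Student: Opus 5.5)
The plan is to verify the statement by a direct differentiation: for each chart we apply the restricted vector field as a derivation to the candidate first integral and show that the result vanishes identically. Write
\[F_0(A,P^1_0)=(\kappa_1+1)^2+4P^1_0(A+P^1_0-\kappa_1),\qquad F_1(a,P^1)=(\kappa_1+1)^2-\frac{4P^1\kappa_1}{a}+\frac{4P^1(P^1+1)}{a^2}.\]
Before computing anything, I would check that these are the same function in two charts. On $\mathcal B^1_0$ (where $q\to1$) we have $A=1/a$ and $P^1=-\hat p/(qt)=P^1_0\,(q-1)/(qt)\to P^1_0\,a$. Substituting gives $4(P^1/a)(1/a+P^1/a-\kappa_1)=4P^1(P^1+1)/a^2-4P^1\kappa_1/a$, so $F_0=F_1$. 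It therefore suffices to prove invariance in one chart, and the other chart serves as a consistency check near $\mathcal Q^1\cap\mathcal B^1_0$.

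The first step is to recompute $X^V_{\mathcal B^1_0}$ carefully rather than trusting the displayed expression. Starting from $X^V=\partial_t+\partial_pH^V\partial_q-\partial_qH^V\partial_p$, I would substitute $\hat p=-pq(q-1)^2$, $P^1_0=-\hat p/(q-1)$, $A=t/(q-1)$ and $\tilde q=q-1$, and push forward by $f_{\mathcal B^1_0}$. I would then multiply by the appropriate power of $\tilde q$ (equivalently of $t$) so that the field extends holomorphically and is not identically zero along $\tilde q=0$. Only the resulting foliation matters, so the normalisation is irrelevant. Next I check that the $\partial_{\tilde q}$-component is divisible by $\tilde q$; this gives invariance of $\mathcal B^1_0$ and justifies restricting the field to it. Finally I set $\tilde q=0$.

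The second step is the verification itself. Suppose the restricted field has the form $\alpha\,\partial_{P^1_0}+\beta\,\partial_A$. Since $\partial_{P^1_0}F_0=4(A+2P^1_0-\kappa_1)$ and $\partial_AF_0=4P^1_0$, we get
\[X(F_0)=4\big(\alpha(A+2P^1_0-\kappa_1)+\beta P^1_0\big).\]
This vanishes exactly when the field is proportional to the Hamiltonian-type field $-P^1_0\,\partial_{P^1_0}+(A+2P^1_0-\kappa_1)\,\partial_A$, up to an overall factor. Taking the displayed field literally gives $X(F_0)=4P^1_0(A+2P^1_0-\kappa_1)(1-A)$. This strongly suggests that the $\partial_A$-coefficient should carry a factor $A$, i.e. read $A(A+2P^1_0-\kappa_1)$, which is natural in the weighted variable $A=t/(q-1)$. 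I expect the recomputation in the first step to produce exactly this. The same computation is then repeated in the chart $(a,P^1)$, where the analogous inconsistency points to a corrected $\partial_a$-coefficient. Alternatively, the second chart can be obtained by transporting the corrected field through $A=1/a$, $P^1_0=P^1/a$, and checking that it becomes vertical on $\{a=0\}$.

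The main obstacle is therefore purely computational: obtaining the correct leading term of the pushed-forward field along $\tilde q=0$. This requires tracking the pole orders of $H^V$ in $q-1$ and $t$, and choosing the right rescaling. As an independent sanity check I would use the conceptual reason behind the statement. The node datum is the residual eigenvalue datum of the hypergeometric connection on the lower component, and isomonodromy forces the node monodromy to be constant along leaves. Hence $F_0$ should be the $\tilde q\to0$ limit of a function whose derivative along $X^V$ is $O(\tilde q)$, and I would verify this property directly for the finite-$\tilde q$ expression computed in \cite{Matt}.
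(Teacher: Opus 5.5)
Your plan is the paper's own argument---apply the restricted field to the node datum and find zero---and your diagnosis is right: recomputing $tX^V$ in $(A,\tilde q,P^1_0)$ and setting $\tilde q=0$ gives $A(A+2P^1_0-\kappa_1)\,\partial_A-AP^1_0\,\partial_{P^1_0}$, so the displayed field is missing exactly the factor $A$, and with the corrected field $X(F_0)=0$ (your identification $F_0=F_1$ via $A=1/a$, $P^1_0=P^1/a$ is also correct and reduces the check to one chart). The one slip is your guess for the other chart: there the restricted field is, up to a factor, $a(2P^1+1-\kappa_1a)\,\partial_a+P^1(2P^1+2-\kappa_1a)\,\partial_{P^1}$, so the printed $\partial_a$-coefficient is fine and it is the $\partial_{P^1}$-coefficient that is wrong, which is what your transport through $A=1/a$, $P^1_0=P^1/a$ produces anyway.
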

\begin{proof}
	A computation shows
	\[X^V_{\mathcal B_0^1}(A, P^1_0)\Big((\kappa_1+1)^2+4P^1_0(A+P^1_0-\kappa_1)\Big)=0,\]and\[ X^V_{\mathcal B_0^1}(a, P^1)\Big((\kappa_1+1)^2-\frac{4P^1\kappa_1}{a}+\frac{4P^1(P^1+1)}{a^2}\Big)=0.\]
	By the definition of first integral, we conclude.
\end{proof}
\begin{prop}
	The isomonodromic leaves in $\mathcal B^1_0$ are contained in the level sets of the first integrals and they form a pencil of conics.
\end{prop}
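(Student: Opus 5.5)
The plan is to use the previous Proposition: the function
\[F(A,P^1_0)=(\kappa_1+1)^2+4P^1_0\big(A+P^1_0-\kappa_1\big)\]
is a first integral of $X^V_{\mathcal B^1_0}$. It follows that every leaf of the restricted foliation lies in a level set $\{F=c\}$. Dropping the constant $(\kappa_1+1)^2$ and rescaling, these level sets are the members $\{4P^1_0(A+P^1_0-\kappa_1)=\lambda\}$, $\lambda\in\C$, of the pencil spanned by the conic $P^1_0(A+P^1_0-\kappa_1)=0$ and the double line at infinity. In the affine chart $(A,P^1_0)$ each member is a plane conic: its quadratic part is $P^1_0(A+P^1_0)$, which is a non-degenerate quadratic form. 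Homogenizing with a variable $Z$, the pencil reads
\[4P^1_0\big(A+P^1_0-\kappa_1 Z\big)-\lambda Z^2=0.\]
All members pass through the two points at infinity $[A:P^1_0:Z]=[1:0:0]$ and $[1:-1:0]$. The singular members are the reducible conic at $\lambda=0$, namely the two lines $P^1_0=0$ and $A+P^1_0=\kappa_1$, together with the double line $Z^2=0$. This is the standard picture of a pencil of conics.

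Since the vector field $X^V_{\mathcal B^1_0}$ is tangent to each level curve, the leaves are exactly the level curves minus the singular points of the vector field. To find those points I would solve $AP^1_0=0$ and $A+2P^1_0-\kappa_1=0$. This gives $(A,P^1_0)=(\kappa_1,0)$ and $(0,\kappa_1/2)$. The first lies on the reducible member, at the intersection of its two lines. The second is the vertex of the member with $\lambda=-\kappa_1^2$, which turns out to be a singular conic: the $\lambda$-discriminant vanishes there, consistent with being a base or singular point. Off these points, each connected component of a smooth part of a level curve is a leaf. The dimension count confirms this: the level curves are one-dimensional, so the leaves are open pieces of the conics.

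Finally I would check that the description is global on $\mathcal B^1_0\cong\F_1$ and not only in the chart centred at $A_0\cap B^1_0$. In the other chart, with coordinates $(a,P^1)$, the level sets are
\[\{(\kappa_1+1)^2-4P^1\kappa_1/a+4P^1(P^1+1)/a^2=c\},\]
that is $4P^1(P^1+1)-4\kappa_1aP^1=(c-(\kappa_1+1)^2)a^2$. This is again a pencil of conics, and it matches the first one under the transition $A=1/a$ together with the relation between $P^1$ and $P^1_0$. Checking this compatibility of the two first integrals (both are the node's residual spectral datum) is the main bookkeeping step. The base points of the pencil are the points on $\{a=0\}=\mathcal B^1_0\cap\mathcal Q^1$ with $P^1=0,-1$, i.e. the special sections, which matches the vertical behaviour noted above. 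Viewing the conics on the blow-up $\F_1$ of $\P^2$, the pencil is the strict transform of a pencil of plane conics. I expect the main obstacle to be exactly this identification of the base locus with the blown-up point and the special sections, since it requires care with the chart transitions of \cite{Matt}.
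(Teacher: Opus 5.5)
Your main argument is correct, and it takes a different route from the paper. The paper only shows that a generic level set $\mathcal C_b$ is rational. It applies Riemann--Hurwitz to the double cover $\mathcal C_b\to\P^1$ given by the ruling of $\F_1$: the branch points are the two roots in $A$ of the discriminant in $P^1_0$, and there is none over $A=\infty$. You instead identify the level sets with the pencil $4P^1_0(A+P^1_0-\kappa_1Z)=\lambda Z^2$ of plane conics, and this gives more:
\begin{itemize}
\item The symmetric matrix of the pencil has determinant $4\lambda$, so every member with $\lambda\neq0,\infty$ is a smooth conic. Riemann--Hurwitz needs this smoothness but the paper only assumes it tacitly.
\item You get the base points $[1:0:0]$ and $[1:-1:0]$ with fixed tangent lines $P^1_0=0$ and $A+P^1_0=\kappa_1 Z$. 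These are the tangencies of slopes $0$ and $\kappa_1$ at $(a,P^1)=(0,0),(0,-1)$, which the paper only records after its proof.
\end{itemize}
The paper's argument is shorter, and it is the template reused for the genus-one curves on $\mathcal A_\infty$.

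One claim in your second paragraph is wrong, although the statement does not depend on it.
\begin{itemize}
\item The member $\lambda=-\kappa_1^2$ is not singular: its determinant is $-4\kappa_1^2\neq0$. At $(0,\kappa_1/2)$ the gradient of $F$ is $(\partial_AF,\partial_{P^1_0}F)=(2\kappa_1,0)\neq0$.
\item What vanishes there is the discriminant in $P^1_0$. So the point is a ramification point of the projection to the $A$-line, where the conic is tangent to the fibre $\{A=0\}=\mathcal B^1_0\cap\mathcal A_0$. This is exactly the kind of point counted in the paper's Riemann--Hurwitz argument.
\item The claim also contradicts your own (correct) list of singular members.
\end{itemize}
The spurious zero comes from a misprint in the displayed vector field. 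With the coefficient $-AP^1_0$ one finds $X(F)=4P^1_0(A+2P^1_0-\kappa_1)(1-A)\neq0$, so that field is not even tangent to the level sets. The foliation tangent to them is $(A+2P^1_0-\kappa_1)\partial_A-P^1_0\partial_{P^1_0}$. This is also what a direct computation of the limit of $\tilde q\,X^V$ gives, up to sign. Its only zero in this chart is the node $(\kappa_1,0)$ of the reducible member. Check $X(F)=0$ before reading off singular points of a restricted field.

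Finally, the step you call the main obstacle is routine, and your phrasing suggests a misconception.
\begin{itemize}
\item On $\mathcal B^1_0$ one has $q=1$, hence $A=1/a$ and $P^1_0=P^1/a$. This substitution turns the first integral in $(A,P^1_0)$ identically into the one in $(a,P^1)$.
\item In fact $(a,P^1)$ is just the affine chart $A=1$ of the same $\P^2$ with coordinates $[A:P^1_0:Z]$.
\item The contraction $\F_1\to\P^2$ sends the section $P^1_0=\infty$ to $[0:1:0]$, where the pencil takes the value $4\neq0$.
\end{itemize}
So the blown-up point is \emph{not} in the base locus. The base locus consists only of the two points on $\{a=0\}$ with $P^1=0,-1$, and each conic of the pencil sits isomorphically in $\F_1$.
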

\begin{proof}
	The first part of the theorem is trivial. In order to state that the curves are of genus zero we use the Riemann-Hurwitz formula. We consider the level sets 
	\[\mathcal C_b:=\{(\kappa_1+1)^2+4P^1_0(A+P^1_0-\kappa_1)=b\}\]
	for $b\in \C$. They are curves in $\mathcal B^1_0\cong\F_1$. The projection $\pi\colon\F_1\to \P^1$ restricted to $\mathcal C_b$ is a ramified covering. The branching point are the roots of the discriminant
	\[\frac{\Delta_b}{16}=(A-\kappa_1)^2+b-(\kappa_1+1)^2=A^2-2\kappa_1A+b-2\kappa_1+1.\]
	For a generic value of $b$ there are two roots defining the two branching points of the covering. We then know that $R=\infty$ is not a branching point since they must be even in number. Applying the Riemann-Hurwitz formula we get then 
	\[2g_{\mathcal C_b}-2=-4+\sum_{p\in \mathcal C_a}(e_p-1)=-4+2=-2\]
	deducing $g_{\mathcal C_b}=0$ as desired. 
\end{proof}
A study of the equation for the first integral around $B^1_0\cap Q^1$ indeed shows that each conic is tangent at the two points $(t,a,P^1)=(0,0,0)$ and $(t,a,P^1)=(0,0,-1)$, that coincide with the intersections between the two special sections in $\mathcal Q^1$ and the hypersurface $\mathcal B^1_0$. Their tangent slopes at these points are 0 and $\kappa_1$, respectively. These values correspond to the coordinates of the sections we blow up in the intermediate divisors $\mathcal F^1_\pm$ in $\mathcal Q^1$.

The foliation is shown in Figure \ref{fig:folA0}

\medskip

\textbf{Vector Field and First integral on $\mathcal{B}_\infty^0$.}
Let us define $f_{\mathcal{B}_\infty^0}$ the coordinate change $(t,q,p)\mapsto(T, R, P^0)$ and $_{\mathcal{B}_\infty^0}$ the coordinate change $(t,q,p)\mapsto(r, q, P^1)$. The hamiltonian vector field restricted to $\mathcal{B}_\infty^0$ is then
\[X^V_{\mathcal{B}_\infty^0}(R, P^0)=\lim_{T\to 0}(f_{\mathcal{B}_\infty^0*}X^V)(T, R, P^0)=-R(2P^0+R-\kappa_0)\partial_R-P^0(P^0-\kappa_0)\partial_{P^0},\]
or
\[X^V_{\mathcal{B}_\infty^0}(r, P^1)=\lim_{q\to 0}(g_{\mathcal{B}_\infty^0*}X^V)(r, q, P^1)=-r(2P^1-\kappa_0r+1)\partial_r-P^1(P^1+1)\partial_{P^1}.\]
We remark that the vector field becomes vertical at the intersection with $\mathcal Q_0=\{R=0\}$ and $\mathcal A_\infty=\{r=0\}$ respectively.\\
In \cite{Matt} we show that the residual spectral data in the node are not the same for the two curves, but their difference is $\kappa_1$, that is a constant. They are the following:
\[2P^0-\kappa_0-1+\frac{2P^0(P^0-\kappa_0)}{R}\;\;\;\;\;\text{ and }\;\;\;\;\;-2P^1\kappa_0-\kappa_0-1+\frac{2P^1(P^1+1)}{r}.\]
We can therefore state the following proposition.
\begin{prop}\label{prop:Hur}
	The residual spectral data are first integrals for the respective vector fields.
\end{prop}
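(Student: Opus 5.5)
The plan is to verify directly that each vector field annihilates its residual spectral datum, exactly as in the proof of the corresponding proposition for $\mathcal B^1_0$. On the chart $(R,P^0)$ I write
\[F_0:=2P^0-\kappa_0-1+\frac{2P^0(P^0-\kappa_0)}{R}\]
and apply $X^V_{\mathcal B^0_\infty}(R,P^0)=-R(2P^0+R-\kappa_0)\partial_R-P^0(P^0-\kappa_0)\partial_{P^0}$. On the chart $(r,P^1)$ I write
\[F_1:=-2P^1\kappa_0-\kappa_0-1+\frac{2P^1(P^1+1)}{r}\]
and apply $X^V_{\mathcal B^0_\infty}(r,P^1)=-r(2P^1-\kappa_0r+1)\partial_r-P^1(P^1+1)\partial_{P^1}$.

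For the first chart, the partial derivatives are
\[\partial_RF_0=-\frac{2P^0(P^0-\kappa_0)}{R^2},\qquad \partial_{P^0}F_0=2+\frac{2(2P^0-\kappa_0)}{R}.\]
Substituting them, the expression $X^V_{\mathcal B^0_\infty}(F_0)$ factors as
\[\frac{2P^0(P^0-\kappa_0)}{R}\Big[(2P^0+R-\kappa_0)-R-(2P^0-\kappa_0)\Big],\]
and the bracket vanishes identically.

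For the second chart, the partial derivatives are
\[\partial_rF_1=-\frac{2P^1(P^1+1)}{r^2},\qquad \partial_{P^1}F_1=-2\kappa_0+\frac{2(2P^1+1)}{r}.\]
Here $X^V_{\mathcal B^0_\infty}(F_1)$ factors as
\[\frac{2P^1(P^1+1)}{r}\Big[(2P^1-\kappa_0r+1)+\kappa_0r-(2P^1+1)\Big],\]
which again vanishes identically.

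There is no real obstacle, only bookkeeping: the key structural observation is that in both charts the coefficient of the vertical derivative, $P^0(P^0-\kappa_0)$ respectively $P^1(P^1+1)$, is exactly the numerator of the $1/R$ (resp.\ $1/r$) term of the first integral. This is what makes everything factor through a common prefactor. The remaining point to check is consistency between the two charts. They describe the same divisor $\mathcal B^0_\infty$, and by \cite{Matt} the two residual data differ by the constant $\kappa_1$, so their being first integrals in both charts is compatible. Alternatively, it would suffice to verify the statement in one chart and transport it via the coordinate change $(T,R,P^0)\leftrightarrow(r,q,P^1)$, since adding a constant preserves the property of being a first integral.
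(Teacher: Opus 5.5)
Your proof is correct and is essentially the paper's argument: a direct check that each restricted vector field annihilates the corresponding residual datum, and your factorizations carry out that check correctly. One small correction to your closing remark, which does not affect the proof since you verified both charts directly: on $\mathcal B^0_\infty$ (where $q=0$), substituting $R=1/r$ and $P^0=P^1R$ turns the first expression \emph{exactly} into the second, and the two vector fields agree up to the factor $-r$, so the two expressions do not differ by $\kappa_1$ in the sense you describe.
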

\begin{proof}
	We consider just one of the residual data. A computation shows
	\[X^V_{\mathcal{B}_\infty^0}(R, P^0)\Bigg(2P^0-\kappa_0-1+\frac{2P^0(P^0-\kappa_0)}{R}\Bigg)=0,\] and \[ X^V_{\mathcal{B}_\infty^0}(r, P^1)\Bigg(-2P^1\kappa_0-\kappa_0-1+\frac{2P^1(P^1+1)}{r}\Bigg)=0.\]
	By the definition of first integral, we conclude.
\end{proof}
\begin{prop}
	The isomonodromic leaves in $\mathcal B^0_\infty$ are contained in the level sets of the first integrals and they form a pencil of conics.
\end{prop}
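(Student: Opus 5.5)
Since Proposition \ref{prop:Hur} shows that the residual spectral datum at the node is a first integral of $X^V_{\mathcal B^0_\infty}$, every isomonodromic leaf in $\mathcal B^0_\infty$ lies in a level set of it. The first half of the statement is therefore immediate. For the second half I will follow the argument used for $\mathcal B^1_0$: show that the generic level set is a rational curve, and identify the family of level sets with a pencil of conics on $\mathcal B^0_\infty\cong\F_1$.

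\textbf{Setting up the level sets.} In the chart $(R,P^0)$ I clear denominators in the first integral. For $b\in\C$ this gives
\[\mathcal C_b:=\{\,2P^0(P^0-\kappa_0)+R\,(2P^0-\kappa_0-1-b)=0\,\}.\]
Each $\mathcal C_b$ is defined by a polynomial of degree two, hence is a conic in the affine chart. As $b$ varies, the equations form the linear system spanned by $2P^0(P^0-\kappa_0)+R(2P^0-\kappa_0-1)$ and $R$, so the level sets are a pencil. Its base locus in this chart is $\{R=0,\;P^0(P^0-\kappa_0)=0\}$, namely the two points where $\mathcal B^0_\infty$ meets the special sections $P^0=0$ and $P^0=\kappa_0$ of $\mathcal Q^0$. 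This agrees with the picture found on $\mathcal B^1_0$.

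\textbf{Genus computation via Riemann--Hurwitz.} I project $\mathcal C_b$ to the $R$-line using the ruling of $\F_1$. The curve $\mathcal C_b$ is quadratic in $P^0$, so the projection is a double cover. Its branch points are the zeros of the discriminant
\[\Delta_b(R)=\bigl(2R-2\kappa_0\bigr)^2-8R\,(\kappa_0+1+b)\cdot(-1)\,,\]
up to the correct signs, which is a quadratic polynomial in $R$. For generic $b$ it has two distinct roots. The number of branch points must be even, so $R=\infty$ is not a branch point. Riemann--Hurwitz then gives $2g-2=2(-2)+2=-2$, hence $g=0$, so the generic leaf closure is a smooth rational curve. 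I would also check the same conclusion in the second chart $(r,P^1)$. There the level sets are $\{2P^1(P^1+1)-r(2P^1\kappa_0+\kappa_0+1+b)=0\}$, again conics through the two points $r=0$, $P^1\in\{0,-1\}$ on $\mathcal A_\infty$.

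\textbf{Main obstacle.} The delicate step is to justify that these affine conics glue to curves of the right class on the compact surface $\mathcal B^0_\infty\cong\F_1$. One has to rule out extra components or extra branch points at infinity, and make sure that "pencil of conics" is meaningful globally. I would attack this in two steps. First, I would use the explicit transition $R=q/T$, $r=1/R$, together with the relation between $P^0$ and $P^1$, to check that the two local first integrals differ only by the constant $\kappa_1$ (as recalled from \cite{Matt}). Second, I would verify that the two local pencils coincide on the overlap. The closure of $\mathcal C_b$ then meets the fibre at infinity in the expected points. Blowing down the $(-1)$-curve of $\F_1$ to $\P^2$, the $\mathcal C_b$ become plane conics through a fixed set of base points, which is exactly a pencil of conics.
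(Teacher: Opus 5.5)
Your argument is essentially the paper's: the same level sets $\mathcal C_b$ in the chart $(R,P^0)$, the double cover of the $R$-line, a quadratic discriminant with two branch points for generic $b$, the parity argument excluding $R=\infty$, and Riemann--Hurwitz giving $g=0$. Your discriminant is correct as written, $\Delta_b=4\bigl(R^2+2(1+b)R+\kappa_0^2\bigr)$ (the paper's displayed linear coefficient has a slip that does not affect the conclusion). Your remark that the $\mathcal C_b$ form the linear system spanned by $2P^0(P^0-\kappa_0)+R(2P^0-\kappa_0-1)$ and $R$, with base points on $R=0$ and on $r=0$, makes the ``pencil'' part of the statement more explicit than the paper does.
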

\begin{proof}
	The first part of the theorem is trivial. 
	In order to state that the curves are of genus zero we use the Riemann-Hurwitz formula. We consider the level sets 
	\[\mathcal C_a:=\{R(2P^0-\kappa_0-1-a)+2P^0(P^0-\kappa_0)=0\}\]
	for $a\in \C$. They are curves in $\mathcal B^0_\infty\cong\F_1$. The projection $\pi\colon\F_1\to \P^1$ restricted to $\mathcal C_a$ is a ramified covering. The branching point are the roots of the discriminant
	\[\frac{\Delta_a}{4}=R^2-R(3\kappa_0+2+2a)+\kappa_0^2.\]
	For a generic value of $a$ there are two roots defining the two branching points of the covering. We then know that $R=\infty$ is not a branching point since they must be even in number. Applying the Riemann-Hurwitz formula we get then 
	\[2g_{\mathcal C_a}-2=-4+\sum_{p\in \mathcal C_a}(e_p-1)=-4+2=-2\]
	deducing $g_{\mathcal C_a}=0$ as desired.
\end{proof}
A study of the equation for the first integral around $B^0_\infty\cap Q^0$ shows that these conics pass through the two points $(T,R,P^0)=(0,0,0)$ and $(T,R,P^0)=(0,0,\kappa_0)$, that coincide with the intersection points between the two special sections in $\mathcal Q^0$ and the hypersurface $\mathcal B^0_\infty$. 

We now study the behaviour of the foliation in $\mathcal A_0$ and $\mathcal A_\infty$. 

\medskip
\textbf{Vector Field and First integral on $\mathcal{B}_\infty^\infty$.}
Thanks to the symmetries of the moduli space, and of the PV equation, we know that we get the same behaviour than on $\mathcal B_\infty^0$.

\medskip
\textbf{Vector Field and First integral on $\mathcal{A}_0$.} Thanks to the symmetry exchanging $\mathcal{Q}^0$ and $\mathcal Q^\infty$, we will only study the foliation in a neighborhood of $\mathcal A_0\cap\mathcal Q^0$ and $\mathcal A_0\cap\mathcal B^1_0$.\\On the first intersection we can apply the same coordinate change we used on $\mathcal Q_0$, that is $f_{\mathcal Q_0}\colon (t,q,p)\mapsto(t,q,P^0)$.  
The hamiltonian vector field restricted to $\mathcal A_0$ is then 
\begin{align*}
	X^V_{\mathcal A_0}(q,P^0)&=\lim_{t\to0}(f_{\mathcal Q^0*}X^V)(t,q,P^0)=\\&= 2q(q-1)\Big(\left(P^0-\frac{\kappa_0}{2}\right)(q-1)-\frac{\kappa_1-1}{2}q\Big)\partial_q+\\&+\Big(\left((P^0)^{2}-P^0( \kappa_0+\kappa_1-1) +\rho \right) q^{2}-P^0(-P^0+ \kappa_0)\Big)\partial_{P^0}.
\end{align*}
We remark that it becomes vertical, as expected, for $q=0$ and moreover it vanishes for $P^0=0,\kappa_0$.\\
On the other intersection $\mathcal A_0\cap\mathcal B^1_0$, we must define a new coordinate change $f_{\mathcal A_0}\colon (t,q,p)\mapsto(A,\tilde q, P^1_0)$. The vector field then becomes
\begin{align*}
	X^V_{\mathcal A_0}(\tilde q,P^1_0)&=\lim_{A\to0}(f_{\mathcal A_0*}X^V)(A,\tilde q,P^1_0)=\\&=
	\tilde q \left(\tilde q +1\right) \left(\mathit{\kappa_0} \tilde q +(\kappa_1-1)(\tilde q +1)-2 P^1_0 \right) \partial_{\tilde q} +\tilde q\left( (\tilde q+1)^2\rho-P^1_0(P^1_0+\kappa_0)\right) \partial_{P^1_0}.
\end{align*}
The residual spectral data in the node, in the new coordinates, are respectively

\[q\left((P^0)^2-(\kappa_0+\kappa_1-1)P^0+\rho\right)-2(P^0)^2+(2\kappa_0+\kappa_1-1)P^0-\rho+\left(\frac{\kappa_1}{2}\right)^2+\frac{P^0(P^0-\kappa_0)}{q}\]

\[\frac{4 P^1_0 \left(P^1_0 +\kappa_0\right)}{\tilde q +1}+4 ( -\kappa_0- \kappa_1+1)P^1_0 +4 \rho \tilde q +\kappa_1^2.\]
\begin{prop}
	The residual spectral data are first integral for the respective vector fields.
\end{prop}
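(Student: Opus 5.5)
The plan is to check directly that each vector field annihilates its function. In each chart I will apply the restricted vector field $X^V_{\mathcal A_0}$ to the corresponding residual spectral datum and show that the result vanishes identically. This is the same strategy as for $\mathcal B^1_0$ and $\mathcal B^0_\infty$ (Proposition \ref{prop:Hur}). Rather than expanding everything blindly, I will organise each computation around a common factor, so that the cancellation is transparent.

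\textbf{Chart near $\mathcal A_0\cap\mathcal B^1_0$.} Write $F=\frac{4P^1_0(P^1_0+\kappa_0)}{\tilde q+1}+4(1-\kappa_0-\kappa_1)P^1_0+4\rho\tilde q+\kappa_1^2$. Its partial derivatives are
\[
\partial_{\tilde q}F=4\Big(\rho-\tfrac{P^1_0(P^1_0+\kappa_0)}{(\tilde q+1)^2}\Big),\qquad
\partial_{P^1_0}F=4\Big(\tfrac{2P^1_0+\kappa_0}{\tilde q+1}+1-\kappa_0-\kappa_1\Big).
\]
Set $L:=(\tilde q+1)^2\rho-P^1_0(P^1_0+\kappa_0)$. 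Then $\partial_{\tilde q}F=4L/(\tilde q+1)^2$, and the $\partial_{P^1_0}$-coefficient of $X^V_{\mathcal A_0}$ is exactly $\tilde qL$. Hence $X^V_{\mathcal A_0}F$ factors as
\[
\frac{4\tilde q L}{\tilde q+1}\Big[\kappa_0\tilde q+(\kappa_1-1)(\tilde q+1)-2P^1_0+2P^1_0+\kappa_0+(1-\kappa_0-\kappa_1)(\tilde q+1)\Big].
\]
The bracket simplifies to $\kappa_0\tilde q+\kappa_0-\kappa_0(\tilde q+1)=0$, so $X^V_{\mathcal A_0}F=0$.

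\textbf{Chart near $\mathcal A_0\cap\mathcal Q^0$.} Here the function is
\[
G=q\,M-2(P^0)^2+(2\kappa_0+\kappa_1-1)P^0-\rho+\tfrac{\kappa_1^2}{4}+\tfrac{P^0(P^0-\kappa_0)}{q},
\qquad M:=(P^0)^2-(\kappa_0+\kappa_1-1)P^0+\rho .
\]
The $\partial_{P^0}$-coefficient of $X^V_{\mathcal A_0}$ is $q^2M+P^0(P^0-\kappa_0)$, and $\partial_qG=M-P^0(P^0-\kappa_0)/q^2$. Their product is $\frac{1}{q^2}\big(q^4M^2-(P^0(P^0-\kappa_0))^2\big)$. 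I will multiply $X^V_{\mathcal A_0}G$ by $q$ to clear denominators and look for the factor $q^2M+P^0(P^0-\kappa_0)$ in both the $\partial_q$-term and the $\partial_{P^0}$-term. The $\partial_q$-coefficient $2q(q-1)\big((P^0-\tfrac{\kappa_0}{2})(q-1)-\tfrac{\kappa_1-1}{2}q\big)$ should combine with $\partial_qG$, and $\partial_{P^0}G$ should reduce to a bracket linear in $q$ that cancels, exactly as in the previous chart. If that factorisation does not appear cleanly, I will fall back on expanding $q\,X^V_{\mathcal A_0}G$ as a polynomial in $(q,P^0)$ and checking that every coefficient vanishes, using $\rho=\rho^V=\frac{(\kappa_0+\kappa_1-1)^2-\kappa_\infty^2}{4}$ only where needed (I expect it is not needed at all).

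The only delicate point is bookkeeping: the constants $\kappa_1^2/4$ and $-\rho$ in $G$ play no role, since only derivatives enter. Beyond that, I expect no conceptual obstacle. The main risk is an algebraic slip in the $\mathcal Q^0$ chart, so I would confirm the identity with a computer algebra check, as the paper does for the earlier propositions. By the definition of first integral, both residual spectral data are then first integrals of $X^V_{\mathcal A_0}$ in their respective charts.
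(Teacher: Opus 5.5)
Your computation in the chart near $\mathcal A_0\cap\mathcal B^1_0$ is correct and complete: $\partial_{\tilde q}F=4L/(\tilde q+1)^2$, the $\partial_{P^1_0}$-coefficient is $\tilde qL$, and the bracket vanishes. The gap is the chart near $\mathcal A_0\cap\mathcal Q^0$. There you only give a plan, and the plan relies on a factorisation that does not exist.

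Put $N:=P^0(P^0-\kappa_0)$ and $W:=(q-1)\big((2P^0-\kappa_0)(q-1)-(\kappa_1-1)q\big)$. Then:
\begin{itemize}
\item $q^2\partial_qG=q^2M-N$, with a minus sign, so $q^2M+N$ is not a factor of it. The product you form is not even a term of $X^V_{\mathcal A_0}G$.
\item Expanding gives $q\,\partial_{P^0}G=W$.
\item The printed $\partial_q$-coefficient is $qW$.
\end{itemize}
So the actual common factor is $W$. With the vector field exactly as printed,
\[
X^V_{\mathcal A_0}G=qW\,\partial_qG+(q^2M+N)\frac{W}{q}=\frac{W}{q}\big(q^2M-N+q^2M+N\big)=2qMW\neq0 .
\]
Your fallback of expanding and checking coefficients would therefore not confirm the identity; it would refute it.

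The cause is a sign error in the printed $\partial_{P^0}$-coefficient. To close the gap you must rederive the restricted field from $H^V$ rather than take it from the text. Since $\hat p=-pq(q-1)^2$, one has $P^0=pq$, so $\dot P^0=-q\,\partial_qH^V+p\,\partial_pH^V$. Multiplying by $qt$, the $t$-terms cancel identically and you get $qt\,\dot P^0=P^0(P^0-\kappa_0)-q^2M$. Meanwhile $qt\,\dot q$ restricts at $t=0$ to $qW$, as printed. Hence the correct restricted field is
\[
qW\partial_q+(N-q^2M)\partial_{P^0}=q^2\big(\partial_{P^0}G\,\partial_q-\partial_qG\,\partial_{P^0}\big),
\]
and
\[
X^V_{\mathcal A_0}G=\frac{W}{q}\big(q^2M-N+N-q^2M\big)=0 .
\]
This is the same mechanism as in your first chart, with the roles of the two coefficients exchanged, and indeed $\rho=\rho^V$ is never used. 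The paper's one-line ``direct computation'' hides this, so your write-up should display the corrected field explicitly rather than cite the printed one.
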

\begin{proof}
	By the definition of first integral, we conclude via a direct computation.
\end{proof}
\begin{prop}
	The isomonodromic leaves in $\mathcal A_0$ are contained in the level sets of the first integrals and they form a pencil of conics.
\end{prop}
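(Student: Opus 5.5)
The plan is the same as for $\mathcal B^1_0$ and $\mathcal B^0_\infty$. The first assertion follows from the previous proposition. The residual spectral datum at the node is constant along integral curves of $X^V_{\mathcal A_0}$, so each leaf lies in a level set of it. Leaves that meet $\{P^0=0,\kappa_0\}$ or $\{q=0\}$, where the vector field is vertical or vanishes, are handled by continuity: the first integral extends over the relevant charts. The real content is to show that a generic level set is a rational curve in $\mathcal A_0\cong\F_1$ and that these curves form a pencil.

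Work in the chart $(q,P^0)$ near $\mathcal A_0\cap\mathcal Q^0$. Set the first integral equal to a constant $b$ and multiply by $q$. The level set becomes
\[\mathcal C_b:=\Big\{q^2\big((P^0)^2-(\kappa_0+\kappa_1-1)P^0+\rho\big)+q\Big(-2(P^0)^2+(2\kappa_0+\kappa_1-1)P^0-\rho+\tfrac{\kappa_1^2}{4}-b\Big)+P^0(P^0-\kappa_0)=0\Big\}.\]
This is a linear system $\{F_0-bq=0\}$ spanned by two fixed members, so it is a pencil. Its base points include $(q,P^0)=(0,0)$ and $(0,\kappa_0)$, the intersections of $\mathcal A_0$ with the special sections of $\mathcal Q^0$. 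By the symmetry exchanging $\mathcal Q^0$ and $\mathcal Q^\infty$, it also includes the analogous points over $\mathcal Q^\infty$.

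To get genus zero, I would follow the earlier proofs. The equation is quadratic in $P^0$, so the projection $\mathcal C_b\to\P^1_q$ is a double cover. Its branch points are the roots of the discriminant in $P^0$:
\[\Delta_b(q)=\Big(q^2(\kappa_0+\kappa_1-1)-q(2\kappa_0+\kappa_1-1)+\kappa_0\Big)^2-4\,(q-1)^2\Big(q^2\rho+q\big(\tfrac{\kappa_1^2}{4}-b-\rho\big)\Big).\]
Here the leading coefficient in $P^0$ is $q^2-2q+1=(q-1)^2$. A priori this has degree four in $q$, which would give genus one, so I must show it has fewer simple roots. Three observations matter:
\begin{itemize}
\item At $q=0$, $\Delta_b(0)=\kappa_0^2$, so $q=0$ is not a branch point.
\item At $q=1$, the leading coefficient vanishes. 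The curve passes through the fibre $q=1$ in a degenerate way, and this is where the second chart $(\tilde q,P^1_0)$ near $\mathcal A_0\cap\mathcal B^1_0$ is needed. There the first integral multiplied by $\tilde q+1$ is quadratic in $P^1_0$ with leading coefficient $4-4(\tilde q+1)\cdot 0$, so it is nondegenerate.
\item Using $\rho=\frac{(\kappa_0+\kappa_1-1)^2}{4}-\frac{\kappa_\infty^2}{4}$, I expect the quartic terms in $\Delta_b$ to reduce so that $\Delta_b$ is a perfect square times a quadratic. The two remaining roots are the only branch points.
\end{itemize}
Riemann--Hurwitz then gives $2g-2=2(-2)+2$, so $g=0$.

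The main obstacle is the behaviour over $q=1$ and $q=\infty$: confirming that the generic $\mathcal C_b$ is irreducible and has exactly two branch points, not four. I would check this directly in the $(\tilde q,P^1_0)$ chart. There the level set
\[4P^1_0(P^1_0+\kappa_0)+(\tilde q+1)\big(4(1-\kappa_0-\kappa_1)P^1_0+4\rho\tilde q+\kappa_1^2-b\big)=0\]
is a conic in the affine coordinates $(\tilde q,P^1_0)$, of total degree two. Its discriminant in $P^1_0$ is a quadratic in $\tilde q$, giving exactly two branch points, and no branching occurs at $\tilde q=\infty$ since there must be an even number. Smoothness of a generic member follows by Bertini away from the base points. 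Since this chart is dense in $\mathcal A_0$, the two charts describe the same curves, which identifies the leaves with the pencil of conics.
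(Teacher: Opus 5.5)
Your proposal is correct and follows the paper's route. Leaves lie in level sets of the nodal residual datum, and Riemann--Hurwitz applied to the double cover given by the ruling of $\mathcal A_0\cong\F_1$ (two branch points, base points $(q,P^0)=(0,0),(0,\kappa_0)$) shows these level sets form a pencil of rational curves. You supply details the paper leaves implicit, notably passing to the $(\tilde q,P^1_0)$ chart, where the level sets are honest affine conics.

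One small correction to your heuristic in the $(q,P^0)$ chart: the quartic term of $\Delta_b$ does not cancel, since it equals $\kappa_\infty^2q^4$. Instead, $\Delta_b=(q-1)^2\big[(\kappa_0-(\kappa_0+\kappa_1-1)q)^2-4q\big(\rho(q-1)+\tfrac{\kappa_1^2}{4}-b\big)\big]$. The factor $(q-1)^2$ appears because the $P^0$-coefficient $(q-1)\big(\kappa_0-(\kappa_0+\kappa_1-1)q\big)$ also vanishes at $q=1$. The value of $\rho$ only ensures that the remaining quadratic factor has leading coefficient $\kappa_\infty^2\neq0$.
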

\begin{proof}
	The first part of the theorem is trivial. The Riemann-Hurwitz formula gives us the second part (as in Proposition \ref{prop:Hur}). It is indeed easy to see that all the conics pass through the points $(t,q,P^0)=(0,0,0)$ and $(t,q,P^0)=(0,0, \kappa_0)$, that coincide with the intersection points between the two special sections in $\mathcal Q^0$ and the hypersurface $\mathcal A_0$. 
\end{proof}

We represented in Figure \ref{fig:folA0} the isomonodromic foliation. Grey lines represent respectively the intersections $\mathcal A_0\cap \mathcal Q^0$ and $\mathcal B^1_0 \cap \mathcal Q^1$.
\begin{center}
    \includegraphics[width=8cm]{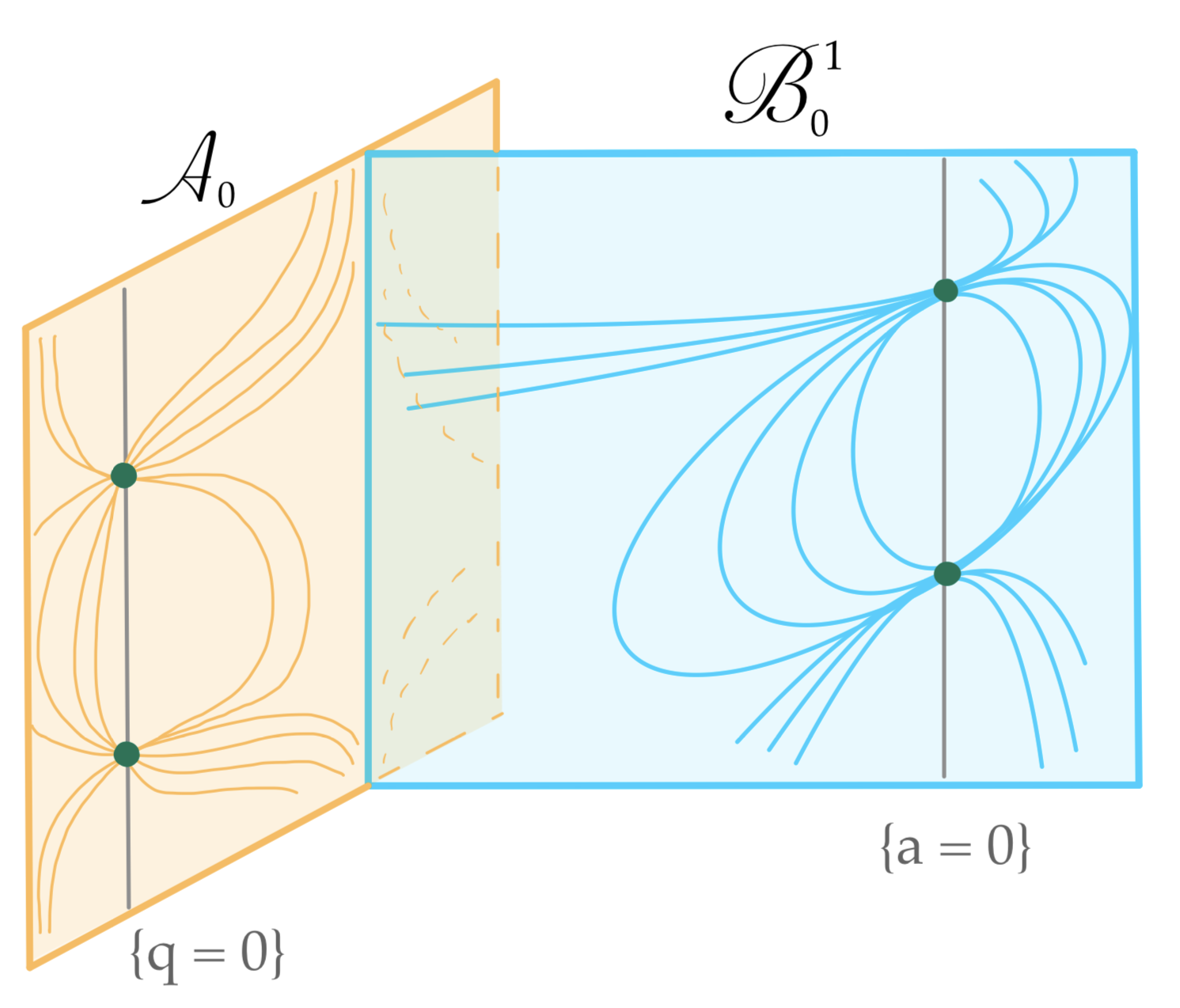}
    \captionof{figure}{}\label{fig:folA0}
\end{center}

\medskip

\textbf{Vector Field and First integral on $\mathcal{A}_\infty$.} As shown in \cite{Matt}, no PV connection lies on this boundary component, exactly as on the $\mathcal Q^i$s. Let us indeed recall that on $\mathcal A_\infty$ we had a choice to do regarding the local coordinates. 
\begin{itemize}
	\item[$\bullet$] The first possible coordinate is $P^1$, the same that we use on $\mathcal Q^1$. It is the one that trivializes the line bundle $\O_{\widetilde \M}(D)$ over $\mathcal A_\infty$. In \cite{Matt} it is shown that, using $P^1$ as a parameter for the PV connections, the limit $t\to\infty$ does not lead to some well defined connection. Moreover, the limit irregular nodal curve that we get under the associated Moebius transformation is not stable.
	\item[$\bullet$] Then there are coordinates $P_\infty^\pm$. Geometrically they correspond to the coordinates on the exceptional divisor arising by the blow up of the sections $P^1=0$ and $P^1=-1$ in $\mathcal A_\infty$. With these parameters, as computed in \cite{Matt}, the limit $t\to\infty$ produces well defined connections and a well defined stable nodal curve. 
\end{itemize}
Since the core of this work are isomonodromic deformations of PV connection, let us firstly analyse the most natural case, studying the Hamiltonian vector field in the coordinates $P_\infty^\pm$. Then we will see that also the coordinate $P^1$ give rise to a very interesting foliation on $\mathcal A_\infty$ and we will give to the objects lying in it the interpretation as Higgs bundles.

Let us recall that 
\[P_\infty^+=-\frac{\hat p}{(q-1)^2}=\frac{ P^1q}{T(q-1)^2}\;\;\;\text{ and }\;\;\;P_\infty^-=-\frac{\hat p}{(q-1)^2}+\frac{T\kappa_1(q-1)-q}{T(q-1)^2}=\frac{q(P^1+1)}{T(q-1)^2}-\frac{\kappa_1}{q-1}\]
are the coordinates on the exceptional divisors relative to the blow-up of the two sections $P^1=0$ and $P^1=-1$.

To compute the vector field restricted to these exceptional divisors we will use the set of coordinates $(0, q, P_\infty^\pm)$, getting to
\[X^V_{\mathcal A_\infty^+}(q, P_\infty^+)=-q^2\partial_q\]
\[X^V_{\mathcal A_\infty^-}(q, P_\infty^-)=q^2\partial_q\]
that is hence horizontal, as shown in orange in Figure \ref{fig:foliation}. This result is not really surprising, since the connections lying on an integral curve are exactly the same (they are not only sharing the same monodromy) as shown in Lemmas 4.19 and 4.20 of \cite{Matt}.

\medskip

In the coordinate $P^1$ we can still consider a particular well defined limit that give rise to something different from a PV connection. 
\begin{prop}\label{higgs}
    Let $(\O\oplus\O(2),D^V+[q],\nabla)$ be a PV connection in its normal form as in \cite{Diarra} and \cite{Matt}. Let $\Omega_0$ its connection matrix expressed in the chart $U_0:=\P^1\setminus\{\infty\}$ and let us consider the constant matrix
    \[M=\begin{pmatrix}1&0\\0&t\end{pmatrix}\in \mathrm{GL}_n(\C),\]
    and call $\mathcal G_M$ the associated (constant) gauge transformation. The limit 
    \[\lim_{t\to \infty}\frac1t\left(\mathcal G_M\cdot\nabla\right)\]
    is well defined and endows $\O\oplus\O(2)$ 
    with a Higgs bundle structure $(\O\oplus\O(2), \Omega_{A_\infty})$ with $\Omega_{A_\infty}\in \mathfrak{gl}(\Omega^1(D^V))$ defined, in the local chart $U_0$, by the expression 
    \[
    \Omega_{A_\infty|U_0}:=\lim_{t=\infty}\Bigg[\frac1t\cdot\left(M^{-1}\cdot\Omega_0(t,q,P^1;x)\cdot M\right)\Bigg]=\begin{pmatrix}0 & \frac{1}{\left(x -1\right)^{2} x} 
\\
 \frac{\left(P^1 +1\right) P^1 q}{\left(q -1\right)^{2}} & \frac{1}{\left(x -1\right)^{2}} 
\end{pmatrix}
\]
\end{prop}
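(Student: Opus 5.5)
The plan is a direct computation in the chart $U_0$, done entry by entry, with $(t,q,P^1)$ as coordinates. Two preliminary observations simplify the work.

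First, $M$ is constant, so $dM\,M^{-1}=0$. The gauge action therefore reduces to conjugation, $\mathcal G_M\cdot\nabla = d+M^{-1}\Omega_0 M$. Consequently $\frac1t(\mathcal G_M\cdot\nabla)=\frac1t d+\frac1t M^{-1}\Omega_0M$ is a $\lambda$-connection with $\lambda=1/t$. As $t\to\infty$ the differential part disappears, so whenever the limit of the matrix part exists, it is an $\mathcal O$-linear Higgs field. This is what gives the "Higgs bundle" interpretation.

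Second, conjugating by $M=\mathrm{diag}(1,t)$ leaves the diagonal entries unchanged, multiplies the $(1,2)$ entry by $t$, and divides the $(2,1)$ entry by $t$. After the overall factor $\frac1t$:
\begin{itemize}
\item the diagonal entries get multiplied by $\frac1t$;
\item the $(1,2)$ entry is unchanged;
\item the $(2,1)$ entry gets multiplied by $\frac1{t^2}$.
\end{itemize}

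The steps are then as follows.
\begin{enumerate}
\item The $(1,1)$ entry of $\Omega_0$ is $0$, so it stays $0$.
\item The $(1,2)$ entry is $\frac{dx}{(x-1)^2}-\frac{dx}{x-1}+\frac{dx}{x}$. Putting it over a common denominator gives $\frac{dx}{x(x-1)^2}$.
\item The $(2,2)$ entry is $\frac{t\,dx}{(x-1)^2}-\kappa_1\frac{dx}{x-1}-\kappa_0\frac{dx}{x}-\frac{dx}{x-q}$. After dividing by $t$, only $\frac{dx}{(x-1)^2}$ survives the limit.
\item For the $(2,1)$ entry, substitute $\hat p=-P^1qt$, which is the definition of $P^1$. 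The terms $-\rho^V x$, $\frac{\hat p}{x-q}$, $\rho^V q$ and $\frac{\hat p}{q-1}$ are $O(t)$, so they vanish after dividing by $t^2$. In $\hat K$, only the terms quadratic in $t$ survive:
\[\frac{\hat p^2}{q(q-1)^2}=\frac{(P^1)^2qt^2}{(q-1)^2},\qquad \frac{-tq\,\hat p}{q(q-1)^2}=\frac{P^1qt^2}{(q-1)^2}.\]
Their sum, divided by $t^2$, tends to $\frac{P^1(P^1+1)q}{(q-1)^2}$, as claimed.
\item The limit matrix no longer has a pole at $x=q$. The pole coming from $\frac{1}{x-q}$ was only apparent and is killed by the factor $\frac1t$.
\end{enumerate}

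The main point needing care, and the one I expect to be the real obstacle, is showing that the limit is a global Higgs field on $\mathcal O\oplus\mathcal O(2)$ with values in $\Omega^1(D^V)$, and not merely a local matrix on $U_0$. To handle this, I would write the normal form in the chart at $\infty$ using the transition $\mathrm{diag}(1,x^{-2})$ (or its equivalent) of $\mathcal O\oplus\mathcal O(2)$. I would then verify two things:
\begin{itemize}
\item the gauge by $M$ commutes with this transition, since both are diagonal;
\item the limiting entries have at most a simple pole at $x=\infty$ in the $\xi=1/x$ coordinate.
\end{itemize}
The entries to check are $\frac{1}{x(x-1)^2}$ twisted by $x^{2}$, $\frac{1}{(x-1)^2}$, and the constant $(2,1)$ entry twisted by $x^{-2}$. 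Each gives at most a simple pole at $\xi=0$ after including $dx=-d\xi/\xi^2$. This confirms that the polar divisor is $D^V$.

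Finally, I would record that the convergence is uniform on compact subsets of $\{t\neq 0\}\times\{q\neq0,1,\infty\}$ in the $(q,P^1)$ variables, since all the discarded terms are explicitly $O(1/t)$. The limit is therefore well defined as claimed.
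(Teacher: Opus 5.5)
Your proposal is correct and follows the same route as the paper. Since $M$ is constant, the gauge action is plain conjugation and the term $\frac1t d$ drops out; the limit is then the entrywise computation in $U_0$ with $\hat p=-P^1qt$, and your entries (the $(1,2)$ entry $\frac{1}{x(x-1)^2}$, the $(2,2)$ entry $\frac{1}{(x-1)^2}$, and the two surviving $t^2$-terms of $\hat K$) all check out. Your extra check in the chart at $\infty$, that the limit is a global Higgs field with polar divisor $D^V$, is not done in the paper, which only computes in $U_0$; it strengthens the argument without changing the route.
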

\begin{proof}
    In the chart $U_0$ the PV connection $\nabla$ can be expressed as $d+\Omega_0$ and the limit then becomes 
    \[\lim_{t\to \infty}\frac1t\left(\mathcal G_M\cdot\nabla\right)\;\stackrel{\text{on } U_0}{=}\;\lim_{t\to \infty}\frac1t\left(d+ M^{-1}\Omega_0 M\right)=\cancel{\lim_{t\to \infty}\frac1t d}+\lim_{t\to \infty}\frac1t\left(M^{-1}\Omega_0 M\right)=\begin{pmatrix}0 & \frac{1}{\left(x -1\right)^{2} x}\\
 \frac{\left(P^1 +1\right) P^1 q}{\left(q -1\right)^{2}} & \frac{1}{\left(x -1\right)^{2}} 
\end{pmatrix}\]
    where the last equality comes from direct computations. We get then that $(\O\oplus\O(2),D^V+[q],\nabla)\to (\O\oplus\O(2),\Omega_{A_\infty})$, that is a Higgs bundle, as desired.
\end{proof}
Let us now study the foliation in the coordinate $P^1$ in a neighbourhood of $A_\infty\cap B^0_\infty$. We remark that, since $\O_{\widetilde \M}(D)_{|\mathcal A_\infty}$ is trivial, the coordinate $P^1$ is globally defined.
\begin{oss}
	Thanks to the symmetry that exchanges $\mathcal{Q}^0$ and $\mathcal Q^\infty$, the same behaviour will occur near $A_\infty\cap B^\infty_\infty$.
\end{oss}
Let us consider the coordinate change $(t,q,p)\mapsto(r, q, P^1)$. We notice that $\mathcal A_\infty$ is defined by setting $r=0$. We get then the following vector field in restriction
\begin{align*}
	X^V_{\mathcal A_\infty}(q, P^1)&=\lim_{r\to0}(g_{\mathcal{B}_\infty^0*}X^V)(r, q,P^1)=\\&=-q(q-1)\left(2P^1+1\right)\partial_q-(q+1)P^1(P^1+1)\partial_{P^1}.
\end{align*}
that is vertical, as expected, for $q=0$. 
\begin{prop}
	A first integral for the isomonodromic foliation is given by 
	\[\frac{qP^1(P^1+1)}{(q-1)^2},\]
	that is the coefficient appearing in Proposition \ref{higgs}.
\end{prop}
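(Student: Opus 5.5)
The plan is to verify directly that $F:=\frac{qP^1(P^1+1)}{(q-1)^2}$ is annihilated by the restricted vector field $X^V_{\mathcal A_\infty}(q,P^1)=-q(q-1)(2P^1+1)\partial_q-(q+1)P^1(P^1+1)\partial_{P^1}$ computed just above. This is the same strategy used for the earlier first-integral propositions. I then add a conceptual explanation of why this particular function appears, via the Higgs field of Proposition \ref{higgs}.

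\emph{Step 1: the partial derivatives.} Since $\partial_q\bigl(q/(q-1)^2\bigr)=\frac{(q-1)-2q}{(q-1)^3}=-\frac{q+1}{(q-1)^3}$, we get
\[
\partial_qF=-\frac{(q+1)P^1(P^1+1)}{(q-1)^3},\qquad \partial_{P^1}F=\frac{q(2P^1+1)}{(q-1)^2}.
\]

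\emph{Step 2: contraction with the vector field.}
\[
X^V_{\mathcal A_\infty}(F)=\frac{q(q+1)(2P^1+1)P^1(P^1+1)}{(q-1)^2}-\frac{(q+1)P^1(P^1+1)\,q(2P^1+1)}{(q-1)^2}=0.
\]
So $F$ is constant along the integral curves on $\mathcal A_\infty$, away from $q=1$. It is a rational function on $\mathcal A_\infty$, so by continuity its level sets are invariant wherever the foliation is defined. Moreover $F$ is nonconstant, so it genuinely defines the leaves as its level curves $\{qP^1(P^1+1)=c\,(q-1)^2\}$. As a consistency check, for $q=0$ we get $F=0$, matching the verticality of the vector field on $\mathcal A_\infty\cap\mathcal Q^0$.

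\emph{Step 3: identification with the Higgs coefficient.} The function $F$ is literally the $(2,1)$-entry of $\Omega_{A_\infty|U_0}$ in Proposition \ref{higgs}. Conceptually, the limit Higgs field has fixed trace $\frac{dx}{(x-1)^2}$ and determinant $-F\,\frac{dx^2}{x(x-1)^2}$. Hence $F$ is the only non-fixed coefficient of its characteristic polynomial, i.e.\ of its spectral curve. In the $t\to\infty$ limit, isomonodromy degenerates to isospectrality, so one expects $F$ to be conserved. This heuristic explains why $F$ is the right candidate, but the proof itself rests on Step 2.

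The main obstacle is not the final algebra but the reliability of the restricted vector field. One must be sure that the limit $r\to 0$ of $g_{\mathcal B^0_\infty *}X^V$ was normalised by the correct power of $r$, since otherwise $X^V_{\mathcal A_\infty}$ could be off by a factor. A common factor does not affect the first-integral property, however, so only the \emph{ratio} of the two components needs checking. I would verify that ratio first by recomputing $\dot q/\dot P^1$ from $H^V$ with $\hat p=-P^1/r$ and $t=q/r$.
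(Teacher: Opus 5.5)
Your proof is correct and is essentially the paper's own argument: the paper likewise verifies by direct computation that $X^V_{\mathcal A_\infty}$ annihilates $qP^1(P^1+1)/(q-1)^2$, and your derivative computation carries this out correctly. One small slip is in your optional sanity check: with the paper's conventions $r=1/R=T/q$, so the substitution should be $t=1/(qr)$ (equivalently $\hat p=-P^1qt$), not $t=q/r$; with the correct substitution the recomputed ratio $\dot q:\dot P^1=q(q-1)(2P^1+1):(q+1)P^1(P^1+1)$ agrees with the displayed field.
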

\begin{proof}
		By the definition of first integral, we conclude via a direct computation.
\end{proof}
\begin{prop}
	The isomonodromic leaves in $\mathcal A_\infty$ are contained in the level sets of the first integrals and they form a pencil of elliptic curves.
\end{prop}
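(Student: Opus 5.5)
The plan is to argue exactly as in Proposition \ref{prop:Hur}, but on the $\F_0$ component. The first claim is immediate: the preceding proposition shows that $F:=\frac{qP^1(P^1+1)}{(q-1)^2}$ is a first integral of $X^V_{\mathcal A_\infty}$. Hence every integral curve of the restricted vector field lies in a level set
\[\mathcal C_c:=\{\,qP^1(P^1+1)-c\,(q-1)^2=0\,\},\qquad c\in\P^1,\]
where $c=\infty$ is read as $(q-1)^2=0$. Since $\mathcal O_{\widetilde\M}(D)$ is trivial on $\mathcal A_\infty$, the pair $(q,P^1)$ consists of global affine coordinates on $\mathcal A_\infty\cong\F_0=\P^1\times\P^1$. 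So $\mathcal C_c$ is the closure of an affine curve in $\P^1\times\P^1$.

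Next I will show that these curves form a pencil. After homogenising in both factors, $qP^1(P^1+1)$ and $(q-1)^2$ are both sections of $\O(2,2)$: the second is $(q-1)^2$ times the square of the equation of the line $\{P^1=\infty\}$. So $\{\mathcal C_c\}_{c\in\P^1}$ is the linear pencil they span. Its base points are the points $(q,P^1)=(1,0)$ and $(1,-1)$, where $\{q=1\}$ meets the two special sections, together with the points of $\{P^1=\infty\}\cap\{q\in\{0,\infty\}\}$. I would also verify that these base points are the singular points of $X^V_{\mathcal A_\infty}$, to make sure the pencil really is the foliation.

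To compute the genus I will use Riemann--Hurwitz on the projection $\mathcal C_c\to\P^1_q$, as in Proposition \ref{prop:Hur}. This map is a double cover, given by $P^1(P^1+1)=c(q-1)^2/q$. Its branch points are where the discriminant $1+4c(q-1)^2/q$ vanishes or has odd-order poles:
\begin{itemize}
\item the two roots of $q+4c(q-1)^2=0$, which are distinct for generic $c$;
\item $q=0$, a simple pole;
\item $q=\infty$, where the right-hand side grows like $cq$, also a simple pole.
\end{itemize}
With four branch points, $2g-2=2(-2)+4=0$, so $g=1$. This agrees with the adjunction formula for a smooth $(2,2)$ curve on $\F_0$, which gives $g=(2-1)(2-1)=1$.

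The main obstacle is checking that the generic member $\mathcal C_c$ is smooth and irreducible, so that these computations are valid; the base points on $\{P^1=\infty\}$ and over $q=1$ need local checks. I would treat this through the branch-point count. For generic $c$ the four branch points are distinct, so the double cover is irreducible and nonsingular away from the base points, and there a local computation in the charts $1/P^1$ and $q-1$ should show that the curve stays smooth. Finally, the special members must be identified: $c=0$, $c=\infty$, and the values of $c$ where $q+4c(q-1)^2$ has a double root, namely $c=-1/16$. These are the singular fibres of the elliptic pencil, and the generic level sets are genus-one curves, as the statement claims.
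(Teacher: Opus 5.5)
Your proposal is correct and follows the paper's route. The first integral gives the containment, and Riemann--Hurwitz for the double cover $\mathcal C_c\to\P^1_q$ gives genus one, with four branch points: the two roots of $q+4c(q-1)^2$, plus $q=0$ and $q=\infty$. Your bidegree-$(2,2)$ adjunction check and the local smoothness at the four base points make this more precise than the paper's brief appeal to Proposition \ref{prop:Hur}.

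One slip in your side remark on special members: the discriminant of $4cq^2+(1-8c)q+4c$ is $1-16c$, so the nodal member is $c=1/16$, not $c=-1/16$. That member passes through the singular point $(q,P^1)=(-1,-\tfrac12)$ of $X^V_{\mathcal A_\infty}$.
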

\begin{proof}
	The first part of the theorem is trivial. The Riemann-Hurwitz formula gives us the second part (as in Proposition \ref{prop:Hur}). It is indeed easy to see that each elliptic curve is tangent to the points $(T,q,P^1)=(0,1,0)$ and $(T,q,P^1)=(0,1, -1)$, that coincide with the intersection points between the two special sections in $\mathcal Q^1$ and the hypersurface $\mathcal A_\infty$. Their tangent slopes are 0 in both cases, meaning that the tangent lines are parallel to $P^1=0$, as shown in Figure \ref{fig:foliation}
\end{proof}

\begin{oss}
    The fibration given by the level sets of the first integral is not an Hitchin fibration. Indeed all the Higgs bundle on the same level set are the same.
\end{oss}

\medskip
\begin{center}
    \includegraphics[width=8cm]{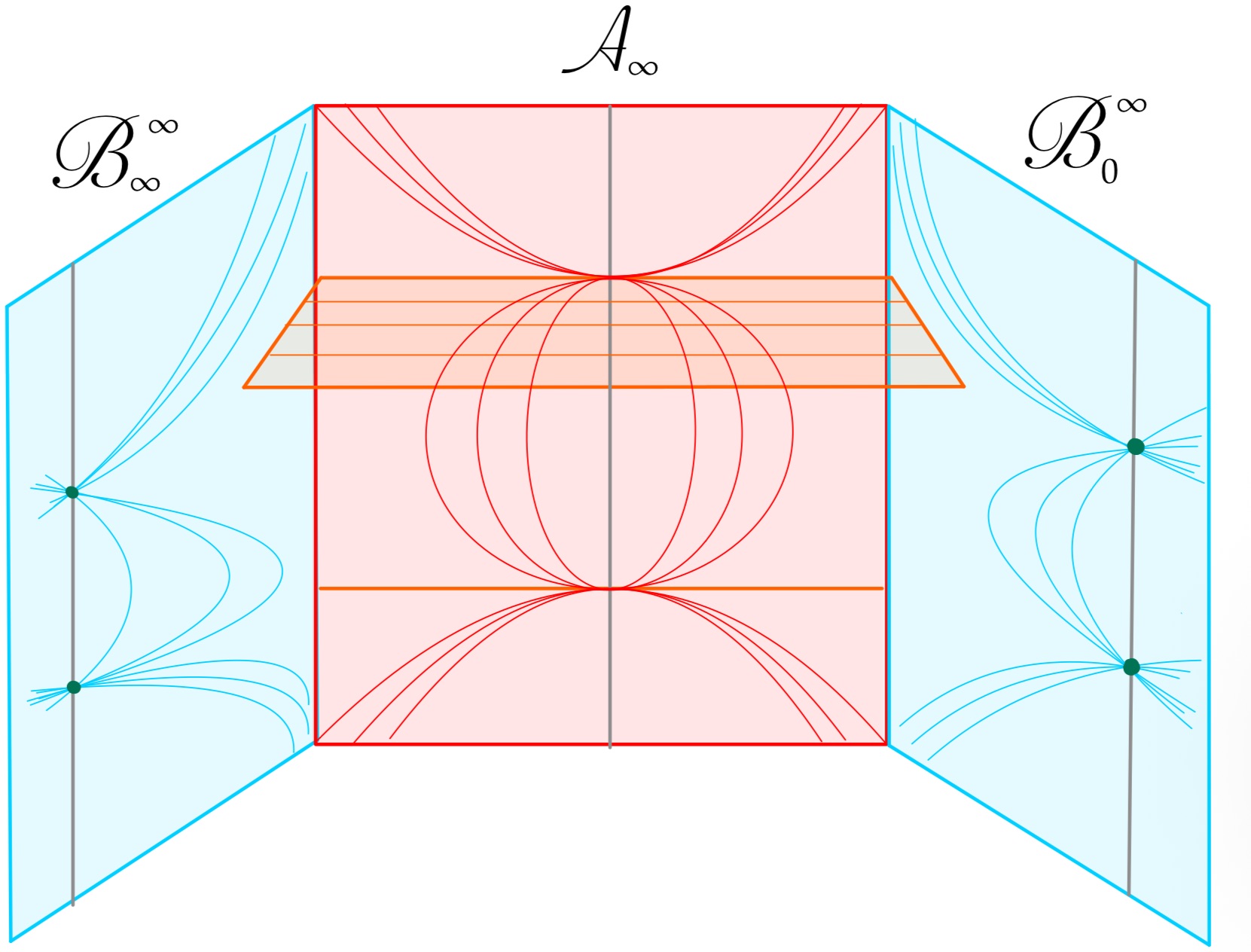}
    \captionof{figure}{}\label{fig:foliation}
\end{center}


\newpage
\bibliographystyle{plain}
\bibliography{bibliography}

\end{document}